\newtheorem{theorem}{Theorem}[section]
\newtheorem{proposition}[theorem]{Proposition}
\newtheorem{corollary}[theorem]{Corollary}
\newtheorem{lemma}[theorem]{Lemma}
\newdefinition{example}{Example}[section]
\newdefinition{remark}{Remark}[section]
\newdefinition{definition}{Definition}[section]
\newproof{pf}{Proof}
\begin{document}

\begin{frontmatter}



\title{Topological incidence rings and the incidence functor} 


\author{João V. P. e Silva} 
\ead{joaovitorps@outlook.com}

\begin{abstract}
We focus on incidence rings, a class of (possibly infinite) matrix rings indexed by ordered sets. Some general properties about them are given, including how they are always the inverse limit of finite matrix rings, giving a natural way to define their topology. It is also shown that the map that sends ordered sets to its incidence ring is a functor from the category of locally finite, preordered sets to the category of (topological) rings, and this functor sends colimits to limits. We discuss consequences of these results to the class of groups of units of incidence rings, and finish by giving some conditions for these groups to be solvable or not.
\end{abstract}


\begin{highlights}
\item The study of a natural topology for incidence algebras and their groups of units using categorical structures.
\item A proof that for every associative ring with unit, there is a contravariant functor that maps preordered sets to (topological) algebras and that it maps colimits to limits.
\item A proof that if the ring is also commutative, then there is a contravariant functor that maps preordered sets to (topological) groups and that it maps colimits to limits.
\item Using the categorical properties, we give conditions for the group of units of incidence algebras to be solvable or not.
\end{highlights}

\begin{keyword}
incidence algebras, ordered sets, topological rings, topological groups, infinite matrix rings, infinite matrix groups.



\end{keyword}

\end{frontmatter}

\maketitle

\section{Introduction}\label{Introduction}


The study of infinite matrices traces back to Poincaré in the discussion of the well known Hill's equation. In \cite{history_infinite_matrices}, Bernkopf gives an introduction to the history of the study of infinite matrices and their importance for operator theory. \\
  In the present paper we focus on incidence rings, a class of rings described using locally finite ordered sets, that is, the sets $[x,y]=\{z; x\leqslant z\leqslant y\}$ are finite for every $x,y$ in the ordered set. This class of rings was first defined in \cite{Matrices1970} restricted to partially ordered sets, and it was used to give a counter-example for a non-published conjecture. Further work on incidence rings can be found in \cite{Matrices1980,Matrices1984,Matrices1985,Matrices1990,Matrices1996,Matrices1999,Matrices2002,Matrices2010} with results about Morita duality, the isomorphism problem for incidence rings, and other general properties of such rings.\ 
  In this article we focus on the theory of topological incidence rings. We show connections between topological incidence rings and the category of locally finite preordered sets by constructing incidence functors over a ring $P$, a collection of contravariant functors from the category of locally finite preordered sets to the category of (topological) rings that map colimits to limits. It is also shown that the class of incidence rings over a ring $P$ is generated by a countable set of incidence rings under the operations of products, inverse limits and pullbacks.\\
  Many of the results related to the category of incidence rings can also be proved to their group of units, as the unit functor is right adjoint. We also give conditions for these groups to be topological groups in the subspace topology.\\
  All rings in this article are assumed to be associative with identity. Rings are assumed to be non-trivial, unless explicitly stated. Ring homomorphisms send the multiplicative identity to the multiplicative identity. Topological rings and groups are always Hausdorff spaces.

\subsection{Structure of the article}

Section \ref{background} of this article will focus on giving the necessary background for the definitions and results of the article. In Subsection \ref{ordered_sets} some definitions for ordered sets are given. These will be central for the definition of incidence rings, their groups of units, and to prove results for such classes of objects.\\
  In Section \ref{Topological incidence rings} we give the definition of incidence rings. It is also shown that these rings are inverse limits of finite matrix rings, allowing us to give a topological structure to the incidence rings.\\
  Section \ref{Categorial properties} focuses on connecting some properties from the incidence rings with their partially ordered sets. The initial focus is describing a category for locally finite preordered sets so that there exists a functor from this category to the category of topological rings. It is also shown that under such properties the functor is contravariant and sends colimits to limits. A corollary of this result is that all matrix rings over $P$ indexed by partially ordered sets can be built from a countable collection of finite matrices over $P$ under the operations of direct products, inverse limits, and pullbacks.\\
  Section \ref{Incidence functor and the group of units} focuses on the groups of units of an incidence ring. We show that when composing the functors from Section \ref{Categorial properties} with the functor that sends a ring to its group of units gives us a functor from the category of preordered sets to the category of (topological) groups. We finish by showing some conditions for a group of units of an incidence algebra to be solvable or not.

\section{Background}\label{background}

\subsection{Ordered sets}\label{ordered_sets}

  For the context of our rings and groups, a definition that proves necessary is the following:

\begin{definition}
Let $(\Lambda,  \preceq)$ be a proset (preordered set). Given $s_1  \preceq s_2 \in \Lambda$ we define $[s_1, s_2]:=\{s\in \Lambda: s_1  \preceq s  \preceq s_2\}$. We will call these subsets of $\Lambda$ \textbf{intervals}. We say that the proset is \textbf{locally finite} if for every $s_1  \preceq s_2\in S$ the set $[s_1, s_2]$ is finite.
\end{definition}
  This locally finite type of property allows us to define multiplication and addition on these rings in a similar way to the finite-dimensional matrix rings. It will also be essential to build a nice topology for such groups/rings.\\
  As the central object of this text is algebraic structures defined in relation to an ordered set, defining some subsets and relations in the ordered sets will be essential for simplifying the proofs. This subsection will focus on giving these basic definitions and providing some results about the structure of ordered sets and their subsets.\\
\textbf{Notation}:
Let $\Lambda$ be a proset and $s_1\preceq s_2\in \Lambda$. We denote:
\begin{itemize}
    \item $s_1\precnsim s_2$ if $s_1 \preceq s_2$ and $s_1\nsim s_2$.
    \item $s_1\precnapprox s_2$ if $s_1 \preceq s_2$ and $s_1\neq s_2$.
    \item $s_1\npreceq s_2$ if it is not the case $s_1\preceq s_2$.
\end{itemize}
  Notice that for $\Lambda$ a proset the set $[s, s]$ is the set of all elements equivalent to $s$ under the preorder. Hence it is not always the case $[s, s]=\{s\}$. Notice that if for every $s\in \Lambda$, $[s, s]=\{s\}$ then $\Lambda$ is a poset (partially ordered set).\\

\begin{definition}\label{defi:general poset one} 
Given a proset $\Lambda$, we define the following:
\begin{itemize}
    \item Given $s\in \Lambda$, we define the \textbf{neighbourhoods} of $s$ as:
    $$\mathcal{N}_{n}(s):=\left\{\begin{array}{ll}
       [s, s]  & \mbox{for }n=0 \\
       \{t\in \Lambda: t  \preceq s \mbox{ or }s  \preceq t\} & \mbox{for }n=1 \\
    \bigcup_{t\in\mathcal{N}_{n-1}(s)}\mathcal{N}_1(t) & \mbox{for } n>1
    \end{array}\right.$$
    The \textbf{full collection of neighbours} $s$ is defined as $\mathcal{N}_\omega(s)=\bigcup_{n\in\mathbb{N}}\mathcal{N}_n(s)$.
    \item We say $s_1, s_2\in \Lambda$ are \textbf{independent} if $s_1\notin \mathcal{N}_{\omega}(s_2)$.
    \item A subset $\Lambda'\subset \Lambda$ is said to be \textbf{closed under intervals} if for every $s_1  \preceq s_2\in 
    \Lambda'$ then $[s_1, s_2]_{  \preceq}\subset \Lambda'$.
    \item Given $\Lambda'\subset \Lambda$ a subset, we say $\Lambda'$ is \textbf{convex} if it is closed under intervals and for every $s_1, s_2\in \Lambda$ there are $t_1, t_2, \ldots, t_n\in \Lambda'$ such that $t_1\in \mathcal{N}_1(s_1)$, $t_{i+1}\in \mathcal{N}_1(t_i)$, $s_2\in \mathcal{N}_1(t_n)$ for $1\leqslant i\leqslant n-1$. In other words, there is a path of intervals inside $\Lambda'$ connecting $s_1$ and $s_2$.
    \item Given $\{\Lambda_i\}_{i\in I}$ a collection such that for all $i\in I$, $\Lambda_i\subset \Lambda$ is convex in $\Lambda$ and for $i\neq j\in I$ then $\Lambda_i\bigcap \Lambda_j=\emptyset$, then $\{\Lambda_i\}_{i\in I}$ is called a \textbf{locally convex collection} of $\Lambda$.
\end{itemize}
\end{definition}


\begin{definition}
Let $\Lambda$ be a proset and $s\in\Lambda$. We say that:
\begin{itemize}
    \item $s$ is a \textbf{maximal element} if given $t\in\Lambda$ is such that $s  \preceq t$ then $s=t$.
    \item $s$ is a \textbf{minimal element} if given $t\in\Lambda$ is such that $t  \preceq s$ then $s=t$.
    \item $s$ is the \textbf{maximum element} if for all $t\in \Lambda$ we have $t  \preceq s$.
    \item $s$ is the \textbf{minimum element} if for all $t\in \Lambda$ we have $s  \preceq t$.
\end{itemize} 
\end{definition}

The following examples illustrate the definition of intervals, neighbourhoods, and convex subsets as denoted above.

\begin{example}\label{ex:basic posets}\label{ex:omega complexity degree}
\begin{itemize}
\item $\mathbf{Q}$: The rational numbers with usual order is a partially ordered sets that is not locally finite.
\item $\mathbf{Zig}$: This is the poset with elements in the integers and order given by $2k>2k+1$ and $2k> 2k-1$, for $k$ an integer.\\
$$\begin{tikzcd}
       & -2 \arrow[ld] \arrow[rd] &    & 0 \arrow[ld] \arrow[rd] &   & 2 \arrow[ld] \arrow[rd] &   & \ldots \arrow[ld] \\
\ldots &                          & -1 &                         & 1 &                         & 3 &                  
\end{tikzcd}$$
The only intervals for $\textbf{Zig}$ are of the form $[n, n]=\{n\}$, $[2n, 2n-1]=\{2n, 2n-1\}$ and $[2n, 2n+1]=\{2n, 2n+1\}$ for some $n\in\mathbb{Z}$. In this case, $\mathcal{N}_0(0)=\{0\}$, $\mathcal{N}_n(0)=\{i\}_{-n\leqslant i\leqslant n}$ and, because no two elements are independent, $\mathcal{N}_\omega(0)=\mathbf{Zig}$. A subset $S\subset \mathbf{Zig}$ is convex if, and only if, there are $n_1, n_2\in\mathbf{Z}\bigcup\{\infty, -\infty\}$ such that $S=\{i\}_{n_1< i < n_2}$. This poset has no independent element. Every element is either maximal or minimal, but it doesn't have any maximum or minimum element.

\item $\mathbf{N}^*_d$: The non-zero natural numbers with order given by $a\leqslant b$ if $b=ak$ for some $k$ natural number.\\
Order on the first $6$ natural numbers.
$$\begin{tikzcd}
1 & 2 \arrow[l]   & 4 \arrow[l]            \\
  & 3 \arrow[lu]  & 6 \arrow[l] \arrow[lu] \\
  & 5 \arrow[luu] &                       
\end{tikzcd}$$
The intervals of this poset can be described using arithmetic properties. For example $[\leqslant 30]=\{1, 2, 3, 5, 6, 10, 15, 30\}_{\mathbf{N}_d^*}$ (the divisors of $30$) and $[30\leqslant]_{\mathbf{N}_d^*}=\{30n\}_{n\in\mathbf{N}_d^*}$ (multiples of $30$). The interval $[3, 30]=\{3, 6, 15, 30\}$, which are all multiples of $3$ that divide $30$. Observe that $\mathcal{N}_1(1)=\mathbf{N}_d^*$ but if $n\neq 1$ then $\mathcal{N}_1(n)$ is the set of all multiples and divisors of $n$, but not all $\mathbf{N}^*$. But, as $1$ is a divisor of all natural numbers, $\mathcal{N}_2(n)=\mathbf{N}_d^*$ for all $n\in \mathbf{N}_d^*$. The element $1$ is the minimum, but there is no maximal element.
\end{itemize} 
\end{example}

  We will also denote some commonly used ordered sets as follows:

\begin{itemize}
\item $\mathbf{n<}$: The poset with elements $\{0, 1, \ldots, n-1\}$ and order given by $0< 1< \ldots< n-1$.

\item $S$: the poset with elements in the set $S$ and order given by equality. This is the poset with elements on $S$ such that all elements are independent.\\
Example: $S=\{0, 1, 2\}$.
$$\begin{tikzcd}
0 \arrow[loop, distance=2em, in=305, out=235] & 1 \arrow[loop, distance=2em, in=305, out=235] & 2 \arrow[loop, distance=2em, in=305, out=235]
\end{tikzcd}$$

\item $\overline{S}$: the proset with elements in the set $S$ and order given by $i  \preceq j$ for all $i,j$. In other words, it is the proset where all elements are equivalent. In the case the set is $\{0, 1, 2, \ldots, n-1\}$ we will just denote it as $\textbf{n}$.\\
Example: $\textbf{3}$
$${\begin{tikzcd}
                                                             & 1 \arrow[rdd, bend right] \arrow[ldd, bend right] &                                                  \\
                                                             &                                                   &                                                  \\
0 \arrow[rr, bend right] \arrow[ruu, bend right, shift left] &                                                   & 2 \arrow[ll, bend right] \arrow[luu, bend right]
\end{tikzcd}}$$

\item $\mathbf{N}$: The natural numbers with the usual order, that is, $0< 1< 2< \ldots < n < \ldots$.

\item  $\mathbf{Z}$: The integers with the usual order, that is, $\ldots < -1< 0 < 1 < \ldots.$
\item $\mathbf{m}\leftarrow \mathbf{n}$: the set with elements $\{0', 1', \ldots, m-1', 0, 1, \ldots, n-1\}$ and order given by for every $i, j\in\{0' , 1', \ldots, m-1'\}$, for every $k, l\in\{0, 1, \ldots, n-1\}$ we have $i\sim j$, $i\precnapprox k$ and $k\sim l$.\\
Example: $\mathbf{2}\leftarrow \mathbf{2}$
$$
\begin{tikzcd}
0' \arrow[r, bend right] & 1' \arrow[l, bend right] & 0 \arrow[l] \arrow[r, bend right] & 1 \arrow[l, bend right]
\end{tikzcd}$$
Notice that the prosets of the form $\mathbf{n}$ can be described as $\mathbf{0}\leftarrow\mathbf{n}$, and the proset $\mathbf{2}<$ can be described as $\mathbf{1}\leftarrow\mathbf{1}$. The prosets of the form $\textbf{m}\leftarrow \textbf{n}$ are all the finite, irreducible prosets with at most two equivalence classes of elements.
\end{itemize}

\begin{definition}\label{defi:disjoint union prosets}
Let $(\Lambda,  \preceq)$ a proset. If there is a collection $\{\Lambda_i\}_{i\in I}$ of convex subsets of $\Lambda$ such that $\Lambda=\bigcup_{i\in I}\Lambda_i$, for every $i\neq j$ we have $\Lambda_i\bigcap \Lambda_j=\emptyset$ and for every $s_i\in \Lambda_i$, $s_j\in \Lambda_j$ we have that if $i\neq j$ then $s_i$ and $s_j$ are independent, we say $\Lambda$ is the \textbf{internal disjoint union} of $\{\Lambda_i\}_{i\in I}$. We denote it by $\Lambda=\bigsqcup_{i\in I} \Lambda_i$.\\
  If $\Lambda$ cannot be written as disjoint union of two non-empty subsets we say $\Lambda$ is \textbf{irreducible}.\index{disjoint union of POSETS|ndx} \index{irreducible POSET|ndx}
\   Let $\{(\Lambda_i,  \preceq_i)\}_{i\in I}$ a collection of prosets. We define the \textbf{external disjoint union} of this collection as the proset $(\bigsqcup_{i\in I} \Lambda_i,  \preceq_{\sqcup})$ with elements in $\bigsqcup_{i\in I}\Lambda_i$ and order $  \preceq_\sqcup$ given by $s_1  \preceq_\sqcup s_2\in \bigsqcup_{i\in I}\Lambda_i$ if, and only if, there is $i\in I$ such that $s_1  \preceq_i s_2\in \Lambda_i$.
\end{definition}

For the ones familiar with category theory, the disjoint union is the coproduct on the category of prosets. Given the posets $\mathbf{2}\! <=\{0, 1\}$ and $\mathbf{3}\!<=\{0', 1', 2'\}$ then $\mathbf{2\!<}\!\sqcup\! \mathbf{3\!<}$ is the  poset with elements $\{0, 1, 0', 1', 2'\}$ and Hasse diagram as follows:
$$
\begin{tikzcd}
1 \arrow[dd] & 2' \arrow[d]   \\
 & 1' \arrow[d]             \\
0           & 0'           
\end{tikzcd}
$$

  The main use of disjoint union is breaking down ordered sets into irreducible parts, as shown below:

\begin{proposition}\label{prop:making prosets into disjoint parts}
Let $\Lambda$ be a non-empty preordered set. Then there is a collection $\{\Lambda_i\}_{i\in I}$ of irreducible subsets of $\Lambda$ such that $\Lambda=\bigsqcup_{i\in I} \Lambda_i$.
\end{proposition}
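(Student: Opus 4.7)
The plan is to exhibit the decomposition explicitly via the equivalence classes of a natural relation on $\Lambda$, and then verify each clause of Definition \ref{defi:disjoint union prosets} in turn.

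First I would define $s \sim t$ iff $t \in \mathcal{N}_\omega(s)$, and check this is an equivalence relation. Reflexivity is immediate since $s \in [s,s] = \mathcal{N}_0(s) \subseteq \mathcal{N}_\omega(s)$. Symmetry follows from the symmetric definition of $\mathcal{N}_1$ (both $t \preceq s$ and $s \preceq t$ put $t$ in $\mathcal{N}_1(s)$), which then propagates up through the recursive definition to $\mathcal{N}_\omega$. Transitivity holds by concatenating the chains of $\mathcal{N}_1$-neighbors that witness membership in $\mathcal{N}_\omega$. Let $\{\Lambda_i\}_{i \in I}$ be the equivalence classes; they partition $\Lambda$, and for $i \neq j$ and $s_i \in \Lambda_i$, $s_j \in \Lambda_j$ the very fact that $s_i \not\sim s_j$ is exactly the independence condition required.

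Next I would check that each $\Lambda_i$ is convex in $\Lambda$. For closure under intervals, if $s_1 \preceq s_2$ both lie in $\Lambda_i$ and $t \in [s_1, s_2]$, then $s_1 \preceq t$ puts $t \in \mathcal{N}_1(s_1) \subseteq \mathcal{N}_\omega(s_1)$, so $t \sim s_1$ and hence $t \in \Lambda_i$. For the path condition, if $s_1, s_2 \in \Lambda_i$ then $s_2 \in \mathcal{N}_\omega(s_1)$, and unfolding the recursive definition of $\mathcal{N}_n(s_1)$ produces a finite sequence $t_0 = s_1, t_1, \ldots, t_{n-1}, t_n = s_2$ with $t_{k+1} \in \mathcal{N}_1(t_k)$ for each $k$. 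Each intermediate $t_k$ satisfies $t_k \sim s_1$ by transitivity, so the path lies entirely inside $\Lambda_i$, as required. This establishes $\Lambda = \bigsqcup_{i \in I} \Lambda_i$ in the sense of Definition \ref{defi:disjoint union prosets}.

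Finally, irreducibility of each $\Lambda_i$ is argued by contradiction: if $\Lambda_i = A \sqcup B$ were an internal disjoint union into non-empty convex subsets, then picking $a \in A$ and $b \in B$ the definition of internal disjoint union forces $a$ and $b$ to be independent, i.e.\ $b \notin \mathcal{N}_\omega(a)$; but $a, b \in \Lambda_i$ means $a \sim b$, which gives $b \in \mathcal{N}_\omega(a)$, a contradiction. The only genuinely non-cosmetic step is the convexity check, and there the potential obstacle is ensuring the witnessing path for $s_2 \in \mathcal{N}_\omega(s_1)$ stays inside $\Lambda_i$ rather than wandering into some other part of $\Lambda$; this is resolved precisely because $\Lambda_i$ was defined to be all of $\mathcal{N}_\omega(s_1)$, so any $\mathcal{N}_1$-chain emanating from $s_1$ is trapped in it by transitivity of $\sim$.
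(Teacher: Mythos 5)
Your proof is correct and follows essentially the same route as the paper: the components are exactly the sets $\mathcal{N}_\omega(s)$, which the paper indexes by a Zorn's-Lemma-maximal family of pairwise independent elements while you organize them as equivalence classes of the relation $t\in\mathcal{N}_\omega(s)$. Your version is somewhat more detailed (it actually verifies convexity and irreducibility, which the paper asserts) and avoids the appeal to Zorn's Lemma, but the underlying decomposition is identical.
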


\begin{proof}
Notice that for every $s\in \Lambda$, every element $t$ on the set $\mathcal{N}_{\omega}(s)$ is independent from every element $t'$ on the set $\Lambda\backslash \mathcal{N}_{\omega}(s)$. It is also the case that for every $s\in\Lambda$, the set $\mathcal{N}_{\omega}(s)$ is irreducible.\\
  Now let $I\subset \Lambda$ a maximal subset of pairwise independent elements of $\Lambda$ (such set exists by Zorn's Lemma). Define for each $i\in I$ the irreducible subset $\Lambda_i:=\mathcal{N}_{\omega}(i)$. As the set $I$ is maximal, the observation above implies 
$$\Lambda=\bigsqcup_{i\in I}\Lambda_i.$$
\end{proof}

We call the irreducible subsets $\Lambda_i=\mathcal{N}_{\omega}(i)$ the \textbf{components} of $\Lambda$.

\begin{proposition}\label{prop:finite subsets into finite convex}
Let $\Lambda$ an irreducible locally finite proset and $S\subset \Lambda$ a finite subset. There is a finite convex subset $\alpha\subset \Lambda$ such that $S\subset \alpha$.
\end{proposition}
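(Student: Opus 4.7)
The plan is to construct $\alpha$ in two phases. First I would build a finite ``skeleton'' $P \supseteq S$ that is path-connected through $\mathcal{N}_1$-chains lying inside $P$, and then let $\alpha$ be the union of all intervals between comparable pairs in $P$.

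For the skeleton, irreducibility together with Proposition \ref{prop:making prosets into disjoint parts} gives $\mathcal{N}_{\omega}(s) = \Lambda$ for every $s \in \Lambda$, so any two elements $s, s' \in S$ are joined by a finite $\mathcal{N}_1$-chain $s = u_0, u_1, \ldots, u_k = s'$ (consecutive elements being comparable). I fix such a chain for each pair in $S$ (a spanning tree on $S$ is enough) and let $P$ be the union of $S$ with the elements of all these chains; then $P$ is finite. Next define
\[
\alpha \;:=\; \bigcup_{\substack{a, b \in P \\ a \preceq b}} [a, b].
\]
By local finiteness this is a finite union of finite sets, hence finite, and $S \subseteq P \subseteq \alpha$ since $p \in [p, p]$ for every $p \in P$.

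It remains to verify that $\alpha$ is convex in the sense of Definition \ref{defi:general poset one}. For closure under intervals, let $c \preceq d$ with $c, d \in \alpha$ and pick $a, b, a', b' \in P$ such that $a \preceq c \preceq b$ and $a' \preceq d \preceq b'$. Transitivity gives $a \preceq c \preceq d \preceq b'$, so $[c, d] \subseteq [a, b']$, and $[a, b']$ is one of the intervals defining $\alpha$; hence $[c, d] \subseteq \alpha$. For the $\mathcal{N}_1$-path condition, any $x \in \alpha$ lies in some $[a, b]$ with $a \in P$, so $a \in \mathcal{N}_1(x) \cap P$; by construction any two elements of $P$ are joined by an $\mathcal{N}_1$-chain entirely inside $P \subseteq \alpha$, so concatenating yields the required path between any two elements of $\alpha$.

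The main obstacle one has to guard against is the possibility that ``closing under intervals'' must be iterated — adding elements of some $[a, b]$ may create new comparabilities with $P$, demanding further intervals, and so on. The transitivity computation above is precisely what rules this out: any comparability $c \preceq d$ appearing inside $\alpha$ is already absorbed by the single interval $[a, b']$ whose endpoints live in the skeleton $P$, so a single pass over the pairs in $P$ is enough.
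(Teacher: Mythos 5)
Your proof is correct and follows essentially the same route as the paper: augment $S$ to a finite $\mathcal{N}_1$-connected skeleton using irreducibility, take the union of all intervals between comparable pairs of the skeleton, and use the transitivity argument $a \preceq c \preceq d \preceq b'$ to see that a single pass suffices for closure under intervals. The only difference is that you explicitly verify the $\mathcal{N}_1$-path half of convexity, which the paper's proof dismisses with ``by definition it is also convex''; that extra care is welcome but does not change the argument.
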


\begin{proof}
The result is trivial for $S$ the empty set. If $|S|>0$, let $s\in S$. For each $s'\in S$ let $t_1^{s'}, t_2^{s'}, \ldots, t_n^{s'}$ be elements such that $t_1^{s'}\in \mathcal{N}_1(s)$, $t^{s'}_{i+1}\in \mathcal{N}_1(t^{s'}_i)$, $s'\in \mathcal{N}_1(t^{s'}_n)$. As $\Lambda$ is irreducible such elements exist. Let $S'=S\bigcup\left(\cup_{s'\in S\backslash\{s\}}\{t_1^{s'}, t_2^{s'}, \ldots, t_n^{s'}\}\right)$. This set is a finite union of finite sets, hence is finite. Let $\alpha=\bigcup_{s_1\leqslant s_2, s_1, s_2\in S'}[s_1, s_2]$. This set is a finite union of finite intervals hence it is also finite. \\
  To see $\alpha$ is convex, let $t_1, t_2\in \alpha$ be such that $t_1  \preceq t_2$ in $\Lambda$. By the definition of $\alpha$ there are $s_1, s_2, s_1', s_2'\in S'$ such that $t_1\in [s_1, s_2]$ and $t_2\in[s_1', s_2']$. Notice that $s_1  \preceq t_1  \preceq t_2  \preceq s_2'$, hence $[t_1, t_2]\subset [s_1, s_2']$. That is, $\alpha$ is closed under intervals. By definition it is also convex. 
\end{proof}

By looking at the components that intersect a finite subset $S\subset \Lambda$ individually, we obtain the following corollary:

\begin{corollary}\label{coro:finite sets into finite locally convex}
Given $\Lambda$ a locally finite proset and $S\subset \Lambda$ a finite subset, there is a locally convex collection $\{\Lambda_i\}_{i\in I}$ such that $S\subset \bigcup_{i\in I}\Lambda_i$ and $\bigcup_{i\in I}\Lambda_i$ is finite.
\end{corollary}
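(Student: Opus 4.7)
The plan is to reduce to the irreducible case using Proposition \ref{prop:making prosets into disjoint parts} and then apply Proposition \ref{prop:finite subsets into finite convex} componentwise. First I would decompose $\Lambda = \bigsqcup_{j\in J} \Lambda_j$ into its irreducible components via Proposition \ref{prop:making prosets into disjoint parts}. Since each element of $S$ lies in exactly one component and $S$ is finite, the set of components meeting $S$ is a finite subset $I \subset J$, say $I = \{j_1, \ldots, j_k\}$. For each $i \in I$, the intersection $S_i := S \cap \Lambda_i$ is a finite subset of the irreducible locally finite proset $\Lambda_i$.

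Next I would apply Proposition \ref{prop:finite subsets into finite convex} to each pair $(S_i, \Lambda_i)$ to obtain a finite convex subset $\alpha_i \subset \Lambda_i$ with $S_i \subset \alpha_i$. I would then claim that $\{\alpha_i\}_{i\in I}$ is the desired locally convex collection. By construction each $\alpha_i$ is convex (as a convex subset of $\Lambda_i$, and since $\Lambda_i$ is itself convex in $\Lambda$, convexity inside $\Lambda_i$ coincides with convexity inside $\Lambda$). For $i \neq i'$ the sets $\alpha_i \subset \Lambda_i$ and $\alpha_{i'} \subset \Lambda_{i'}$ are disjoint because the components are disjoint, so the collection is locally convex in the sense of Definition \ref{defi:general poset one}.

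Finally I would verify the two remaining conclusions: the union $\bigcup_{i\in I}\alpha_i$ contains $S$ because $S = \bigcup_{i\in I} S_i \subset \bigcup_{i\in I}\alpha_i$, and it is finite because it is a finite union (over $I$, which is finite) of finite sets $\alpha_i$.

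The only subtle point, and the one I would be most careful about, is making sure that convexity of $\alpha_i$ inside the component $\Lambda_i$ really gives convexity inside $\Lambda$ — this is where the independence of elements from different components in the disjoint union decomposition is essential, since any comparability relation or $\mathcal{N}_1$-path involving a point of $\alpha_i$ is forced to stay inside $\Lambda_i$. Once this is observed, the rest is bookkeeping.
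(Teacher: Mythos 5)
Your proof is correct and follows exactly the route the paper intends: the paper derives this corollary in one line by ``looking at the components that intersect $S$ individually,'' which is precisely your decomposition via Proposition \ref{prop:making prosets into disjoint parts} followed by a componentwise application of Proposition \ref{prop:finite subsets into finite convex}. Your extra care about convexity in $\Lambda_i$ versus convexity in $\Lambda$ is a worthwhile detail the paper leaves implicit.
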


\begin{definition}
Let $f:\Lambda_1\rightarrow \Lambda_2$ a function between preordered sets. We say $f$ is an \textbf{bijective order preserving map} if it is a set bijection, and for every $s_1, s_2\in \Lambda_1$ we have that $s_1\preceq s_2$ if, and only if, $f(s_1)\preceq f(s_2)$.
\end{definition}

\subsection{The units group of a 
matrix ring}

Given $P$ a ring, we denote the \textbf{group of units of $P$} as $P^*$. We will denote the ring of $n\times n$ matrices over $P$ by $\textup{M}_n(P)$, the group of $n\times n$ invertible matrices by $\textup{GL}_n(P)$, and the group of $n\times n$ invertible matrices with determinant $1$ as $\textup{SL}_n(P)$.

\begin{theorem}
The functor that maps a ring $P$ to its group of units is right adjoint. 
\end{theorem}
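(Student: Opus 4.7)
The plan is to exhibit an explicit left adjoint, namely the integral group ring functor $\mathbb{Z}[-]:\mathbf{Grp}\to\mathbf{Ring}$, and verify the defining natural bijection. This is the most direct route and produces an explicit adjunction rather than merely an existence statement.

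First I would define $\mathbb{Z}[-]$ on objects: a group $G$ goes to $\mathbb{Z}[G]$, the free $\mathbb{Z}$-module on the underlying set of $G$ with multiplication extended $\mathbb{Z}$-bilinearly from the group law. On morphisms, a group homomorphism $\varphi:G\to H$ extends $\mathbb{Z}$-linearly to a ring homomorphism $\mathbb{Z}[\varphi]:\mathbb{Z}[G]\to\mathbb{Z}[H]$ sending $\sum n_g g$ to $\sum n_g \varphi(g)$. Functoriality is immediate from the definitions.

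Next I would establish the natural bijection
$$\Phi_{G,P}:\mathrm{Hom}_{\mathbf{Ring}}(\mathbb{Z}[G],P)\;\longrightarrow\;\mathrm{Hom}_{\mathbf{Grp}}(G,P^{*}).$$
For a ring homomorphism $f:\mathbb{Z}[G]\to P$, note that every $g\in G$ is a unit in $\mathbb{Z}[G]$ (its inverse is $g^{-1}$), so $f(g)\in P^{*}$; set $\Phi(f)(g):=f(g)$, which is a group homomorphism $G\to P^{*}$ because $f$ is multiplicative and sends $1_G$ to $1_P$. Conversely, given a group homomorphism $\psi:G\to P^{*}$, define $\Psi(\psi):\mathbb{Z}[G]\to P$ by $\sum n_g g\mapsto \sum n_g \psi(g)$; the universal property of the free $\mathbb{Z}$-module plus the multiplicativity of $\psi$ on the basis $G$ show that $\Psi(\psi)$ is a well-defined ring homomorphism (it sends $1$ to $1$ because $\psi(1_G)=1_{P^*}=1_P$). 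A short check confirms $\Phi$ and $\Psi$ are mutually inverse.

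Finally I would verify naturality in both variables: given a group homomorphism $\alpha:G'\to G$ and a ring homomorphism $\beta:P\to Q$, the square
$$\mathrm{Hom}_{\mathbf{Ring}}(\mathbb{Z}[G],P)\xrightarrow{\Phi_{G,P}}\mathrm{Hom}_{\mathbf{Grp}}(G,P^{*})$$
commutes with the pre/post-composition maps induced by $\mathbb{Z}[\alpha]$, $\alpha$, $\beta$, and $\beta|_{P^*}:P^*\to Q^*$; this is a routine diagram chase that uses only the definition of $\Phi$ and the functoriality of $\mathbb{Z}[-]$ and $(-)^{*}$. I do not expect any genuine obstacle: the only mildly subtle point is remembering that a ring homomorphism automatically carries units to units, so that the right-hand side of the adjunction is well-defined, and that $\psi$ must be sent on the identity element $1_G$ to $1_P$, which is forced by $\psi$ being a group homomorphism into $P^{*}$.
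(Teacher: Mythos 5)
Your proposal is correct, and it is the standard construction: the integral group ring functor $\mathbb{Z}[-]$ is left adjoint to the unit group functor, via the natural bijection $\mathrm{Hom}_{\mathbf{Ring}}(\mathbb{Z}[G],P)\cong\mathrm{Hom}_{\mathbf{Grp}}(G,P^{*})$ exactly as you describe. Note that the paper itself gives no proof of this statement at all --- it is quoted as a known fact (elsewhere attributed to a standard category theory reference), so you have supplied a complete argument where the paper supplies none. All the key checks are in place: $G\subset\mathbb{Z}[G]^{*}$ so restriction lands in $P^{*}$, the linear extension of $\psi:G\to P^{*}$ is a unital ring homomorphism by freeness of the underlying $\mathbb{Z}$-module and multiplicativity on the basis, and naturality is a routine diagram chase. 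One small remark: since the paper works with unital rings and unit-preserving homomorphisms throughout, your observation that ring homomorphisms carry units to units (hence the transpose map is well defined) is indeed the only point where that convention matters, and you have handled it correctly.
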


\begin{definition}\label{thrm:giving topology to unit group}
An \textbf{absolute topological ring} is a topological ring such that its group of units with the subspace topology is a topological groups.
\end{definition}

As topological fields are defined so the inverse map is continuous, every topological field is an absolute topological ring.


\begin{theorem}\label{thrm:conditions invertible}
Let $P$ be a commutative ring. Then for every $A\in \textup{M}_n(P)$ we have
$$A \textup{adj}(A)=\textup{adj}(A)A=\det(A)I_n.$$
Hence, $A\in \textup{M}_n(P)$ is invertible if, and only if, $\det(A)$ is invertible in $P$.
\end{theorem}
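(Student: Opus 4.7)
The plan is to reduce everything to the classical Laplace/cofactor expansion, which carries over verbatim from a field to any commutative ring since the definitions of $\det$ and $\textup{adj}$ are purely polynomial in the matrix entries.

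First I would fix notation: recall that $\textup{adj}(A)$ is defined as the transpose of the cofactor matrix, so its $(i,j)$-entry is $(-1)^{i+j}\det(A_{ji})$, where $A_{ji}$ is the minor obtained by deleting row $j$ and column $i$ of $A$. Then the $(i,j)$-entry of $A\,\textup{adj}(A)$ equals $\sum_{k} A_{ik}(-1)^{j+k}\det(A_{jk})$. The key observation is that this sum is exactly the Laplace expansion along row $j$ of the matrix $A^{(i\to j)}$ obtained from $A$ by replacing its $j$-th row with its $i$-th row. When $i=j$ this gives $\det(A)$; when $i\neq j$ the matrix $A^{(i\to j)}$ has two equal rows, so its determinant is $0$ by the alternating property of $\det$ (which holds over any commutative ring because it follows from the Leibniz formula). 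This yields $A\,\textup{adj}(A)=\det(A)I_n$, and the identity $\textup{adj}(A)A=\det(A)I_n$ follows by the symmetric column argument (or by applying the row case to the transpose, using $\textup{adj}(A^T)=\textup{adj}(A)^T$ and $\det(A^T)=\det(A)$).

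For the invertibility criterion, one direction is immediate: if $\det(A)$ is a unit in $P$ with inverse $d^{-1}$, then $B:=d^{-1}\textup{adj}(A)$ satisfies $AB=BA=I_n$ by the formula just established, so $A\in\textup{GL}_n(P)$. Conversely, if $A$ has a two-sided inverse $B\in \textup{M}_n(P)$, the multiplicativity of the determinant (again valid over any commutative ring, since it is a polynomial identity) gives $\det(A)\det(B)=\det(AB)=\det(I_n)=1$, hence $\det(A)\in P^*$.

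I do not expect any serious obstacle: the main point is to confirm that the standard proofs over a field, which rely only on the Leibniz formula, the alternating property under equal rows/columns, and multiplicativity of $\det$, are all polynomial identities and therefore transfer to an arbitrary commutative ring. The only mildly delicate step is justifying $\det = 0$ when two rows coincide in characteristic $2$ contexts, but the Leibniz expansion pairs permutations $\sigma$ with $\sigma\circ(i\,j)$ to cancel the terms, so this holds over any commutative ring.
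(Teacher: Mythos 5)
Your proof is correct. Note that the paper states this theorem without proof, as standard background material on determinants over commutative rings, so there is no argument of the author's to compare against. Your write-up is the classical one: identifying the entries of $A\,\textup{adj}(A)$ with Laplace expansions of matrices with a replaced row, killing the off-diagonal entries via the vanishing of $\det$ on matrices with two equal rows, and deducing the invertibility criterion from the adjugate identity in one direction and multiplicativity of $\det$ in the other. You were also right to flag and resolve the one genuinely delicate point, namely that ``two equal rows implies zero determinant'' must be proved by the sign-reversing pairing $\sigma \mapsto \sigma\circ(i\,j)$ in the Leibniz formula rather than by dividing by $2$, so the argument is valid over an arbitrary commutative ring, including those of characteristic $2$.
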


\begin{theorem}\label{thrm:invertible as product}
Let $P$ be a commutative ring. If $A\in \textup{GL}_n(R)$ then
$$A^{-1}=\frac{1}{\det(A)}\textup{adj}(A).$$
\end{theorem}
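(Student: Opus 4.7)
The plan is to reduce the statement immediately to Theorem \ref{thrm:conditions invertible}, which has already done all the nontrivial work. Since $A \in \textup{GL}_n(P)$, that theorem guarantees that $\det(A)$ is a unit of $P$, so $\det(A)^{-1}$ exists in $P$ and the expression $\frac{1}{\det(A)}\textup{adj}(A)$ makes sense as an element of $\textup{M}_n(P)$.

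Next I would take the identity $A\,\textup{adj}(A) = \det(A)\,I_n$ from Theorem \ref{thrm:conditions invertible} and multiply both sides on the left by $A^{-1}$, obtaining $\textup{adj}(A) = \det(A)\,A^{-1}$. Multiplying both sides by the unit $\det(A)^{-1}$ (which commutes with everything because it is a scalar in the commutative ring $P$ acting on matrices) yields the desired formula $A^{-1} = \frac{1}{\det(A)}\textup{adj}(A)$. Alternatively, one can give a direct verification: compute $A\cdot\bigl(\frac{1}{\det(A)}\textup{adj}(A)\bigr) = \frac{1}{\det(A)}\bigl(A\,\textup{adj}(A)\bigr) = \frac{1}{\det(A)}\det(A)\,I_n = I_n$, and symmetrically for the product in the other order using $\textup{adj}(A)\,A = \det(A)\,I_n$; by uniqueness of inverses this forces $A^{-1} = \frac{1}{\det(A)}\textup{adj}(A)$.

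There is really no substantive obstacle here: the content of the proof is entirely contained in Theorem \ref{thrm:conditions invertible}, and the present statement is simply the rearrangement of that theorem's identity into a closed-form expression for $A^{-1}$. The only minor point that deserves a word of attention is the commutativity of $P$, which is needed so that the scalar $\det(A)^{-1}$ can be freely moved past matrix entries; this is precisely the hypothesis already assumed.
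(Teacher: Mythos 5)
Your proof is correct. The paper states this theorem as a known background fact and provides no proof of its own; your derivation from Theorem~\ref{thrm:conditions invertible} (invertibility of $\det(A)$ plus the identity $A\,\textup{adj}(A)=\textup{adj}(A)\,A=\det(A)I_n$, then scaling by $\det(A)^{-1}$) is the standard argument and supplies exactly what the paper leaves implicit.
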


\begin{theorem}[Cayley-Hamilton Theorem]\label{thrm:cayley-hamilton}
Let $P$ be a commutative ring. If $A\in \textup{GL}_n(R)$ then
$$\textup{adj}(A)=-\sum_{i=1}^{n}c_iA^{i-1}$$
where $c_i$ are the coefficients of the characteristic polynomial of $A$.
\end{theorem}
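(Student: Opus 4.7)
The plan is to deduce this identity from the standard Cayley--Hamilton relation $p(A)=0$, where $p(\lambda)=\det(\lambda I-A)=\lambda^{n}+c_{n-1}\lambda^{n-1}+\cdots+c_{1}\lambda+c_{0}$, together with Theorem \ref{thrm:invertible as product}, which under the invertibility hypothesis gives $\textup{adj}(A)=\det(A)\,A^{-1}$. Multiplying the Cayley--Hamilton relation on both sides by $A^{-1}$ and isolating, one obtains
$$A^{-1}=-\frac{1}{c_{0}}\bigl(A^{n-1}+c_{n-1}A^{n-2}+\cdots+c_{1}I\bigr)=-\frac{1}{c_{0}}\sum_{i=1}^{n}c_{i}A^{i-1}$$
with the convention $c_{n}=1$. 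Since $c_{0}=p(0)=\det(-A)=(-1)^{n}\det(A)$, the factor $\det(A)/c_{0}$ is $(-1)^{n}$, which after substituting back into $\textup{adj}(A)=\det(A)A^{-1}$ yields the displayed formula (up to the sign convention chosen for the characteristic polynomial).

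The first step I would carry out is to record the standard Cayley--Hamilton theorem over a commutative ring as a prerequisite; this is a classical result and typically proved either by a specialization argument (reducing to matrices over a field, then to algebraically closed fields where it follows from triangularization) or by the adjugate identity in $\textup{M}_{n}(P[\lambda])$. The second step is the algebraic manipulation above, which is a straightforward rearrangement.

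A slightly cleaner route that proves the identity directly, without quoting Cayley--Hamilton as a black box, is to work in $\textup{M}_{n}(P[\lambda])$ with the polynomial identity $(\lambda I-A)\,\textup{adj}(\lambda I-A)=p(\lambda)I$ from Theorem \ref{thrm:conditions invertible}. Writing $\textup{adj}(\lambda I-A)=\sum_{j=0}^{n-1}B_{j}\lambda^{j}$ with $B_{j}\in \textup{M}_{n}(P)$ and comparing coefficients of $\lambda^{j}$ on both sides gives the recursion $B_{n-1}=I$ and $B_{k-1}=AB_{k}+c_{k}I$, so that $B_{0}=\sum_{i=1}^{n}c_{i}A^{i-1}$. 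Evaluating the original identity at $\lambda=0$ gives $-A\cdot B_{0}=c_{0}I=(-1)^{n}\det(A)I$, and since $A\cdot \textup{adj}(A)=\det(A)I$ by Theorem \ref{thrm:conditions invertible}, the two agree up to the sign $(-1)^{n-1}$, yielding the stated formula.

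The main obstacle is not conceptual but notational: fixing a convention for the characteristic polynomial ($\det(\lambda I-A)$ versus $\det(A-\lambda I)$), tracking the parity factor $(-1)^{n}$ appearing in $c_{0}=(-1)^{n}\det(A)$, and ensuring that the indexing $\sum_{i=1}^{n}c_{i}A^{i-1}$ (with the convention $c_{n}=1$) lines up consistently with the Cayley--Hamilton relation. Once a convention is pinned down, the proof collapses to the single substitution outlined above.
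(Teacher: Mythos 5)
The paper states this result as classical background and gives no proof of its own, so there is nothing to compare against on that side; your argument is correct and is the standard derivation. Both of your routes work: the first (multiply $p(A)=0$ by $A^{-1}$ and use $\textup{adj}(A)=\det(A)A^{-1}$ from Theorem \ref{thrm:invertible as product}) and the second (expand $\textup{adj}(\lambda I - A)=\sum_{j=0}^{n-1}B_j\lambda^j$ in $\textup{M}_n(P[\lambda])$, extract the recursion $B_{n-1}=I$, $B_{k-1}=AB_k+c_kI$, and evaluate at $\lambda=0$), and your coefficient bookkeeping $B_0=\sum_{i=1}^n c_iA^{i-1}$ checks out. Two small points worth making explicit. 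First, in the division by $c_0$ you should note that $c_0=\pm\det(A)$ is a unit of $P$ precisely because $A$ is invertible (Theorem \ref{thrm:conditions invertible}); over a general commutative ring this is the only place the hypothesis $A\in\textup{GL}_n(P)$ is used, and your second route avoids division entirely by instead cancelling the invertible factor $A$ from $-AB_0=c_0I=(-1)^n\det(A)I=(-1)^nA\,\textup{adj}(A)$. Second, you are right that the displayed identity holds only for one normalization of the characteristic polynomial: with $p(\lambda)=\det(\lambda I-A)$ one gets $\textup{adj}(A)=(-1)^{n+1}\sum_{i=1}^n c_iA^{i-1}$, whereas the statement as printed (no stray sign) corresponds to taking the $c_i$ from $\det(A-\lambda I)$; since the paper does not pin down its convention, flagging this, as you do, is the correct resolution rather than a gap in your proof.
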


\section{Topological incidence rings}\label{Topological incidence rings}

\subsection{First definitions}

\begin{definition}[Definition 1.1 \cite{Matrices1973}]\label{defi:proset matrices}\index{$\textup{M}_{\Lambda}(P)$|ndx}\index{ring of matrices indexed by a proset|ndx}
Let $P$ be a ring and $(\Lambda,  \preceq)$ a locally finite proset. We define the \textbf{incidence ring of $\Lambda$ over $P$} as
$$\textup{M}_\Lambda(P):=\left\{ (a_{s_1,s_2})\in P^{\Lambda\times \Lambda}
: \textit{ if }a_{s_1,s_2}\neq 0\textit{ then } s_1  \preceq s_2 \right\}$$
with  coordinatewise sum and multiplication defined as follows: given two element $(a_{s_1,s_2}),(b_{s_1,s_2})\in \textup{M}_\Lambda(P)$ then $(a_{s_1,s_2}).(b_{s_1,s_2})=(c_{s_1,s_2})$, where 
$$c_{s_1,s_2}=\sum_{t\in[s_1,s_2]}a_{s_1,t}b_{t,s_2}.$$ 
\end{definition}

Note that this multiplication is defined in such a way that for any finite $n\times n$ block of the matrix centered on the main diagonal, the multiplication follows the same rule as $\textup{M}_\mathbf{n}(P)$. We will denote the elements of $\textup{M}_\Lambda(P)$ as $A,B,C$. We denote $A_{s_1,s_2}$ as the coordinate $(s_1, s_2)$ in the matrix $A$. When talking about the coordinate of the product $A_1A_2\ldots A_n$ we denote it as $(A_1A_2\ldots A_n)_{s_1,s_2}$. Following are some examples of incidence rings.

\begin{example}
\begin{enumerate}
    \item Given $n\in\mathbb{N}$, the ring $\textup{M}_{\mathbf{n}}(P)$ is the ring of $n\times n$ matrices over the ring $P$ with the usual multiplication.
    \item Given $n\in\mathbb{N}$, the ring $\textup{M}_{\mathbf{n<}}(P)$ is the ring of $n\times n$ upper triangular matrices over $P$ with the usual multiplication.
    \item Given $S$ a set, the ring $\textup{M}_S(P)$ is isomorphic to $\prod_{s\in S} P$ with the coordinate-wise multiplication. 
    \item The ring $\textup{M}_{\mathbf{N}^*_d}(P)$ has elements of the form:

$$
A:=\left(\begin{array}{cccccc}
A_{1,1} & A_{1,2} & A_{1,3} & A_{1,4} & A_{1,5}  & \cdots \\
0 & A_{2,2} & 0 & A_{2,4} & 0 & \cdots\\
0 & 0 & A_{3,3} & 0 & 0  & \cdots \\
0 & 0 & 0 & A_{4,4} & 0 &  \cdots \\
0 & 0 & 0 & 0 & A_{5,5} &  \cdots  \\
0 & 0 & 0 & 0 & 0 &  \cdots \\
\vdots & \vdots  & \vdots & \vdots & \vdots & \ddots
\end{array}\right)
$$

\noindent and the multiplication of two matrices $A,B\in \textup{M}_{\mathbf{N}^*_d}(P)$ is:

$$
A
B:=\left(\begin{array}{ccccccc}
A_{1,1}B_{1,1} & \sum_{d|2}A_{1,d}B_{d,2} & \sum_{d|3}A_{1,d}B_{d,3} & \sum_{d|4}A_{1,d}B_{d,4} & \sum_{d|5}A_{1,d}B_{d,5}  & \cdots \\
0 & A_{2,2}B_{2,2} & 0 & \sum_{d|4}A_{2,d}B_{d,4} & 0 & \cdots\\
0 & 0 & A_{3,3}B_{3,3} & 0 & 0 &  \cdots \\
0 & 0 & 0 & A_{4,4}B_{4,4} & 0 &  \cdots \\
0 & 0 & 0 & 0 & A_{5,5}B_{5,5} &  \cdots  \\
0 & 0 & 0 & 0 & 0 &  \cdots \\
\vdots & \vdots  & \vdots & \vdots & \vdots & \ddots
\end{array}\right)
$$
\end{enumerate}
\end{example}
Given $S\subset \textup{M}_{\Lambda}(P)$ a subset, we define $S_{s_1,s_2}=\{A_{s_1,s_2}: A\in S\}\subset P$. Note though that $R\subset \textup{M}_\Lambda(P)$ a subset is a subring if $0,1\in R$ and for every $s_1  \preceq s_2$ in $\Lambda$ we have that
$$R_{(s_1,s_2)}+R_{(s_1,s_2)}\subset R_{(s_1,s_2)},$$
$$\sum_{s\in[s_1,s_2]}R_{s_1,s}R_{s,s_2}\subset R_{s_1,s_2}.$$
And $I\subset \textup{M}_{\Lambda}(P)$ a subset is a two-sided ideal if $0\in I$ and for every $s_1  \preceq s_2$ in $\Lambda$ we have that
$$R_{(s_1,s_2)}+R_{(s_1,s_2)}\subset R_{(s_1,s_2)},$$
$$\sum_{s\in[s_1,s_2]}I_{s_1,s}R_{s,s_2}\subset I_{s_1,s_2},$$
$$\sum_{s\in[s_1,s_2]}R_{s_1,s}I_{s,s_2}\subset I_{s_1,s_2}.$$
This allows us to easily describe some ideals and subrings of $\textup{M}_{\Lambda}(P)$ central to our work. These also allow us to prove the existence of a topology on $\textup{M}_\Lambda(P)$ in a natural way, preserving properties of the finite blocks of matrices.\\
\textbf{Notation}:[Ideals of $\textup{M}_\Lambda(P)$]\label{defi:ideals}
Let $\Lambda$ a locally finite proset and $P$ a ring. We denote the following two-sided ideals:
\begin{itemize}
    \item Given $s_1  \preceq s_2$ in $\Lambda$ we define 
    $$I^{\Lambda}_{[s_1, s_2]}(P)=\{A\in \textup{M}_{\Lambda}(P): \text{ if } t_1, t_2\in[s_1, s_2] \text{ then } A_{t_1,t_2}=0 \}.$$ 
    This ideal has quotient 
    $$\textup{M}_{\Lambda}(P)/I^{\Lambda}_{[s_1,s_2]}(P)\cong \textup{M}_{[s_1,s_2]}(P).$$
    \item For a convex set $\Lambda'\subset \Lambda$, the ideal
    $$I^{\Lambda}_{\Lambda'}=\bigcap_{s_1  \preceq s_2\in \Lambda'}I^{\Lambda}_{[s_1,s_2]}.$$
    This ideal has quotient $$\textup{M}_{\Lambda}(P)/I^{(\Lambda,  \preceq)}_{\Lambda'}\cong \textup{M}_{\Lambda'}(P).$$
    \item For $\{\Lambda_i\}_{i\in I}$ a locally convex set of $\Lambda$ we define the ideal 
    $$I^{\Lambda}_{\{\Lambda_i\}_{i\in I}}=\bigcap_{i\in I}I^{\Lambda}_{\Lambda_i}.$$
    This ideal has quotient 
    $$\textup{M}_{\Lambda}(P)/I^{\Lambda}_{\{\Lambda_i\}_{i\in I}}\cong \prod_{i\in I} \textup{M}_{\Lambda_i}(P).$$
    \item Given $J\leqslant P$ a two-sided ideal we define
    $$\textup{M}_{\Lambda}(J):=\{A\in \textup{M}_\Lambda(P): \text{ for every } t_1,t_2\in \Lambda \text{ then } A_{t_1,t_2}\in J \}\leqslant \textup{M}_\Lambda(P).$$
    This ideal has quotient 
    $$\textup{M}_\Lambda(P)/\textup{M}_\Lambda(J)\cong \textup{M}_\Lambda(P/J).$$
    \item  Given $\{\Lambda_i\}_{i\in I}$ a locally convex set of $\Lambda$ and $J\leqslant P$ a two-sided ideal of $P$, then the ideal $$I_{\{\Lambda_i\}_{i\in I}}+\textup{M}_\Lambda(J)$$
    has quotient 
    $$\textup{M}_\Lambda(P)/(I_{\{\Lambda_i\}_{i\in I}}+\textup{M}_\Lambda(J))\cong \prod_{i\in I} \textup{M}_{\Lambda_i}(P/J).$$
\end{itemize} 
Some of the ideals above were initially defined at \cite{Matrices1973}.

\begin{example}
In the case of the matrix ring $\textup{M}_\mathbf{N}(\mathbb{Z}_p)$ and interval be $[0, 2]=\{0, 1, 2\}$, we have that $I_{[0,2]}$ has elements of the form
$$
\left(\begin{array}{cccccc}
0 & 0 & 0 & A_{0,3} & A_{0,4}  & \cdots \\
0 & 0 & 0 & A_{1,3} & A_{1,4} & \cdots\\
0 & 0 & 0 & A_{2,3} & A_{2,4}  & \cdots \\
0 & 0 & 0 & A_{3,3} & A_{3,4} &  \cdots \\
0 & 0 & 0 & 0 & A_{4,4} &  \cdots  \\
\vdots & \vdots  & \vdots & \vdots & \vdots & \ddots
\end{array}\right)
$$
and the quotient $\textup{M}_\mathbf{N}(\mathbb{Z}_p)/I_{[0,2]}$ is isomorphic to $\textup{M}_{\mathbf{3<}}(\mathbb{Z}_p)$, the ring of upper $3\times 3$ triangular matrices over $\mathbb{Z}_p$. Given the ideal $p\mathbb{Z}_p$ of $\mathbb{Z}_p$, the quotient $\textup{M}_\mathbf{N}(\mathbb{Z}_p)/\textup{M}_\mathbf{N}(p\mathbb{Z}_p)$ is isomorphic to $\textup{M}_\mathbf{N}(\mathbb{Z}/p\mathbb{Z})$, and  $\textup{M}_\mathbf{N}(\mathbb{Z}_p)/(I_{[0,2]}+\textup{M}_\mathbf{N}(p\mathbb{Z}_p))$ is isomorphic to $\textup{M}_{\mathbf{3<}}(\mathbb{Z}/p\mathbb{Z}).$
\end{example}

\subsection{Infinite matrices as limits of finite matrices}

To prove the results in this section we will look at $\textup{M}_\Lambda(P)$ as objects in the category of associative topological rings with identity and the morphisms are the continuous homomorphism of rings. This category will be denoted as $\textbf{TopRings}$. For the next result, notice that if $P$ is a topological ring and $\Lambda$ is a finite proset, then the ring $\textup{M}_\Lambda(P)$ is a topological ring with subspace topology in relation to the product space $P^{\Lambda\times\Lambda}$. In a similar way, for $\Lambda$ an infinite, locally finite proset we define the topology on $\textup{M}_{\Lambda}(P)$ as the subspace topology on $P^{\Lambda\times \Lambda}$. Our main goal in this subsection is proving that with this topology, $\textup{M}_{\Lambda}(P)$ is a topological ring.\\
  Notice that if $\Lambda'\subset \Lambda$ is a finite convex subset, say $\Lambda'=\{s_1, s_2, \ldots, s_n\}$, then $\textup{M}_{\Lambda'}(P)$ can be seen as a subring of $\textup{M}_{\mathbf{n}}(P)$ with embedding determined by the enumeration of $\Lambda'$. The ideals of the form $I^\Lambda_{\Lambda'}$, for $\Lambda'$ a finite convex subset, will then be essential to build our inverse limit and the topology, as the quotient under such ideals are topological incidence rings.\\
  Given $\Lambda$ a proset, define $\Gamma(\Lambda)=\{\alpha\subset \Lambda:  \alpha\text{ is finite and convex}\}$ \textbf{the set of all finite convex subsets of $\Lambda$}. The next result follows directly from Proposition \ref{prop:finite subsets into finite convex}.\index{$\Gamma(\Lambda)$|ndx}\index{set of finite, convex subsets of a proset|ndx}

\begin{corollary}\label{coro:directed set}
If $\Lambda$ is an irreducible poset then for convex subsets $\alpha, \beta\in \Gamma(\Lambda)$ there is $\gamma\in \Gamma(\Lambda)$ such that $\alpha\subset \gamma$ and $\beta\subset \gamma$, that is, under the $\subset$ order the set $\Gamma(\Lambda)$ is a directed set.
\end{corollary}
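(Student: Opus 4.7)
The plan is to reduce this statement directly to Proposition \ref{prop:finite subsets into finite convex}, which already does the essential work. The only content left for the corollary is to package two inputs $\alpha$ and $\beta$ into a single finite set and then invoke the proposition to get a common enlarging finite convex subset.

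Concretely, I would start from arbitrary $\alpha,\beta \in \Gamma(\Lambda)$ and set $S := \alpha \cup \beta$. Since both $\alpha$ and $\beta$ are finite by assumption, $S$ is a finite subset of $\Lambda$. Because $\Lambda$ is irreducible (and implicitly locally finite, as required for $\Gamma(\Lambda)$ to behave well and to apply the proposition), Proposition \ref{prop:finite subsets into finite convex} supplies a finite convex subset $\gamma \subset \Lambda$ with $S \subset \gamma$. This $\gamma$ lies in $\Gamma(\Lambda)$ and satisfies $\alpha \subset \gamma$ and $\beta \subset \gamma$ simultaneously, which is exactly the upper-bound condition required for directedness under inclusion.

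To close out, I would note that since $\subset$ is a partial order on $\Gamma(\Lambda)$ and we have just shown that every pair has a common upper bound, $\Gamma(\Lambda)$ is a directed set in the standard sense. There is essentially no obstacle here: the real work already occurred in Proposition \ref{prop:finite subsets into finite convex}, where one had to use irreducibility to connect the elements of $S$ by paths of neighbourhoods and then close under intervals. Thus the corollary is a genuinely immediate consequence, and the only care needed is to make sure the hypothesis that $\Lambda$ is (locally finite and) irreducible is invoked when citing the proposition.
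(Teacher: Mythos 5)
Your proposal is correct and matches the paper's intent exactly: the paper states that the corollary ``follows directly from Proposition \ref{prop:finite subsets into finite convex}'', and your argument of taking $S=\alpha\cup\beta$ and applying that proposition to obtain a common finite convex superset $\gamma$ is precisely that route.
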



We can then prove the following Theorem:
\begin{theorem}\label{Prop: prodiscrete ring}
Let $P$ a ring and $(\Lambda,  \preceq)$ an infinite irreducible locally proset. Then $\textup{M}_\Lambda(P)\cong\varprojlim_{\alpha\in\Gamma(\Lambda)} \textup{M}_\alpha(P)$. If $P$ is a topological ring then $\textup{M}_{\Lambda}(P)$ is a topological ring with topology given by the inverse limit.
\end{theorem}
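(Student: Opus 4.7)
The plan is to realize $\textup{M}_\Lambda(P)$ as the inverse limit by constructing a canonical comparison map and showing it is an isomorphism, first in the category of rings and then refining to $\textbf{TopRings}$. I would begin by setting up the inverse system: for each pair $\alpha\subset\beta$ in $\Gamma(\Lambda)$, define $\pi^\beta_\alpha:\textup{M}_\beta(P)\to\textup{M}_\alpha(P)$ as restriction of a matrix to its $\alpha\times\alpha$ block. Because $\alpha$ is convex, for any $s_1\preceq s_2\in\alpha$ the interval $[s_1,s_2]$ lies entirely in $\alpha$, so the product rule is preserved by restriction and $\pi^\beta_\alpha$ is a ring homomorphism; compositional compatibility $\pi^\gamma_\alpha=\pi^\beta_\alpha\circ\pi^\gamma_\beta$ is immediate. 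Corollary \ref{coro:directed set} (which requires irreducibility of $\Lambda$) ensures $\Gamma(\Lambda)$ is a directed set, so this is a genuine inverse system.

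Next, I would use the quotient identifications $\textup{M}_\Lambda(P)/I^\Lambda_\alpha\cong \textup{M}_\alpha(P)$ listed in the previous subsection to obtain a compatible cone of projections $\pi_\alpha:\textup{M}_\Lambda(P)\to\textup{M}_\alpha(P)$, and invoke the universal property to get a canonical ring homomorphism $\Phi:\textup{M}_\Lambda(P)\to\varprojlim_\alpha\textup{M}_\alpha(P)$. For injectivity, a nonzero $A$ admits some entry $A_{s_1,s_2}\neq 0$ with $s_1\preceq s_2$, and taking $\alpha=[s_1,s_2]\in\Gamma(\Lambda)$ gives $\pi_\alpha(A)\neq 0$. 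For surjectivity, given a coherent family $(A^{(\alpha)})_{\alpha\in\Gamma(\Lambda)}$ I would define $A\in\textup{M}_\Lambda(P)$ entrywise by $A_{s_1,s_2}=(A^{([s_1,s_2])})_{s_1,s_2}$ for $s_1\preceq s_2$ and $0$ otherwise; coherence guarantees that any other $\alpha$ containing $s_1,s_2$ yields the same value, so $\pi_\alpha(A)=A^{(\alpha)}$ and hence $\Phi(A)$ is the given family.

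For the topological statement, I would compare the two candidate topologies through their subbases. The subspace topology on $\textup{M}_\Lambda(P)\subset P^{\Lambda\times\Lambda}$ is generated by cylinder sets specifying open conditions on finitely many coordinates, while the inverse limit topology is generated by preimages under the $\pi_\alpha$ of opens in $\textup{M}_\alpha(P)$, themselves built from finitely many coordinate conditions inside $\alpha$. Applying Proposition \ref{prop:finite subsets into finite convex} to bundle any finite set of coordinates into a common finite convex $\alpha$ shows each cylinder is exactly a basic open of the inverse limit, and vice versa, so the two topologies coincide and $\Phi$ is a homeomorphism. Since inverse limits of topological rings in $\textbf{TopRings}$ are topological rings, this also yields the topological ring structure on $\textup{M}_\Lambda(P)$.

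The step I expect to demand the most care is the surjectivity of $\Phi$, specifically verifying that the entrywise assembled $A$ genuinely lies in $\textup{M}_\Lambda(P)$ and that its multiplication matches that inherited from the limit. The crux is that each product entry $(AB)_{s_1,s_2}$ is a finite sum over $[s_1,s_2]$, which is a computation taking place entirely within any $\alpha\supset [s_1,s_2]$; compatibility of the family then forces the product to match what the limit prescribes, closing the argument.
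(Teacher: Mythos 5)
Your proposal is correct and follows essentially the same route as the paper: the same inverse system over $\Gamma(\Lambda)$ directed by Corollary \ref{coro:directed set}, the same comparison map from the universal property, the same coordinatewise injectivity/surjectivity argument, and the same identification of the inverse limit topology with the subspace topology inherited from $P^{\Lambda\times\Lambda}$ (the paper phrases this via a homeomorphism $\prod_{\alpha}P^{\alpha\times\alpha}\cong P^{\Lambda\times\Lambda}$ rather than a subbasis comparison, but the content is identical). Your worry about multiplication in the surjectivity step is also dispatched the same way the paper does it: the comparison map is already a ring homomorphism by the universal property, so only bijectivity at the level of underlying abelian groups needs checking.
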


\begin{proof}

By Corollary \ref{coro:directed set} we have that $\Gamma(\Lambda)$ is a directed set. Given $\alpha\subset \beta\in \Gamma(\Lambda)$ define $\pi_{\beta\rightarrow\alpha}:\textup{M}_\beta(P)\longrightarrow \textup{M}_\alpha(P)$ as the natural projection of $\textup{M}_\beta(P)$ with kernel $I^{\beta}_{\alpha}$. It is easy to see that given $\alpha\subset\beta\subset\gamma$ then $\pi_{\gamma\rightarrow\alpha}=\pi_{\beta\rightarrow\alpha}\circ \pi_{\gamma\rightarrow\beta}$, that is, the following diagram commutes.\\
\begin{center}
\begin{tikzcd}
                                                                  & \textup{M}_\gamma(P) \arrow[rd, "\pi_{\gamma\rightarrow\alpha}" description] \arrow[ld, "\pi_{\gamma\rightarrow\beta}" description] &             \\
\textup{M}_\beta(P) \arrow[rr, "\pi_{\beta\rightarrow\alpha}" description] &                                                                                                                            & \textup{M}_\alpha(P)
\end{tikzcd}
\end{center}

The collection $((\textup{M}_{\alpha})_{\alpha\in \Gamma(\Lambda)},(\pi_{\beta\rightarrow\alpha})_{\alpha\subset\beta})$ is then an inverse system. As the category of associative rings with identity is closed under inverse limits, the inverse limit of this system exists and is unique. Denote $\varprojlim_{\alpha\in\Gamma(\Lambda)} \textup{M}_\alpha(P)$ as the inverse limit of this system. Our objective now is proving $\textup{M}_\Lambda\cong \varprojlim_{\alpha\in\Gamma(\Lambda)} \textup{M}_\alpha(P)$.\\
  Observe that for every $\alpha\in\Gamma(\Lambda)$ there is $\psi_{\alpha}:\textup{M}_\Lambda(P)\longrightarrow \textup{M}_\alpha(P)$ given by the natural projection of $\textup{M}_\Lambda$ under $I^\Lambda_{\alpha}$, and these projections are such that the outer triangle of the following diagram commutes.
\\

\begin{center}
\begin{tikzcd}
                                                                            &  & \textup{M}_\Lambda(P) \arrow[dd, "f" description, dashed] \arrow[rrddd, "\psi_{\alpha_1}" description] \arrow[llddd, "\psi_{\alpha_2}" description] &  &               \\
                                                                            &  & /                                                                                                                                          &  &               \\
                                                                            &  & \varprojlim_{\alpha\in\Gamma(\Lambda)} \textup{M}_\alpha(P) \arrow[rrd, "\pi_{\alpha_1}" description] \arrow[lld, "\pi_{\alpha_2}" description]     &  &               \\
\textup{M}_{\alpha_2}(P) \arrow[rrrr, "\pi_{\alpha_2\rightarrow\alpha_1}" description] &  &                                                                                                                                            &  & \textup{M}_{\alpha_1}(P)
\end{tikzcd}
\end{center}

By the universal property of inverse limits, there exists an unique ring homomorphism $f:\textup{M}_\Lambda(P)\longrightarrow \varprojlim_{\alpha\in\Gamma(\Lambda)} \textup{M}_\alpha(P)$ such that the whole diagram commutes. It remains to show the function $f$ is a bijection.\\
  Let $A\in \textup{M}_\Lambda(P)$ be such that $f(A)=0$. Because the diagram commutes we have that $\psi_{\alpha}(A)=0$ for all $\alpha\in\Gamma(\Lambda)$. Because these projections are such that $\psi_{\alpha}(A)=0$ if, and only if, $A_{t_1,t_2}=0$ for all $t_1, t_2\in\alpha$ we have that $A=0$. Hence $f$ is injective.\\
  It remains to prove that $f$ is surjective. For that, given $\overline{A}\in \varprojlim_{\alpha\in\Gamma(\Lambda)} \textup{M}_\alpha(P)$ we need to find $A\in \textup{M}_\Lambda(P)$ such that $f(A)=\overline{A}$. We will describe such $A$ coordinate-wise. Notice that as $f$ is a ring homomorphism, it will an isomorphism if, and only if, $f$ is an isomorphism of abelian groups (when we forget the multiplication structure on our ring). Hence we can treat these as abelian groups to simplify our proof.\\
  Observe that by looking at these as abelian groups, for every proset $\Lambda$ we have that $\textup{M}_\Lambda(P)$ is a subgroup of $P^{\Lambda\times \Lambda}$. Hence for every $s_0, t_0\in \Lambda$ there is a projection $$\begin{array}{cccc}
\pi^\Lambda_{s_0,t_0}: & \textup{M}_\Lambda(P) & \rightarrow & P\\
 & A & \mapsto & A_{s_0,t_0}.
\end{array}$$ 
Similarly, for $\alpha_1,\alpha_2\in\Gamma( \Lambda)$, with $\alpha_1\subset\alpha_2$, and $s_0, t_0\in\alpha_1$ there are projection $\pi^{\alpha_1}_{s_0,t_0}:\textup{M}_{\alpha_1}(P)\rightarrow P$ and $\pi^{\alpha_2}_{s_0,t_0}:\textup{M}_{\alpha_2}(P)\rightarrow P$ such that $\pi_{s_0,t_0}^{\alpha_1}\circ \psi_{\alpha_1}=\pi_{s_0,t_0}^{\alpha_2}\circ \psi_{\alpha_2}$, and the  following diagram commutes:
\begin{center}
\begin{tikzcd}
                                                                                                                                  &  & \textup{M}_\Lambda(P) \arrow[dd, "f" description, dashed] \arrow[rrddd, "\psi_{\alpha_1}" description] \arrow[llddd, "\psi_{\alpha_2}" description] &  &                                                                    \\
                                                                                                                                  &  & /                                                                                                                                          &  &                                                                     \\
                                                                                                                                  &  & \varprojlim_{\alpha\in\Gamma(\Lambda)} \textup{M}_\alpha(P) \arrow[rrd, "\pi_{\alpha_1}" description] \arrow[lld, "\pi_{\alpha_2}" description]     &  &                                                                     \\
\textup{M}_{\alpha_2}(P) \arrow[rrrr, "\pi_{\alpha_2\rightarrow\alpha_1}" description] \arrow[rrd, "{\pi^{\alpha_2}_{s_0,t_0}}" description] &  &                                                                                                                                            &  & \textup{M}_{\alpha_1}(P) \arrow[lld, "{\pi^{\alpha_1}_{s_0,t_0}}" description] \\
                                                                                                                                  &  & P                                                                                                                                          &  &                                                                    
\end{tikzcd}
 \end{center}

\noindent That is, the coordinate $A_{s_0,t_0}$ is the same for all $\alpha\in\Gamma(\Lambda)$ such that $s_0, t_0\in\alpha$.\\
  Notice that $\pi_{s_0,t_0}^{\beta_1}\circ\pi_\alpha(\overline{A})=\pi_{s_0,t_0}^{\beta_2}\circ\pi_\alpha(\overline{A})$ for every $\beta_1, \beta_2\in \Gamma(\Lambda)$ such that $[s_0, t_0]\subset \beta_i$, $i=1, 2$. We define $A$ coordinatewise as $A_{s_0,t_0}=\pi^{\alpha}_{s_0,t_0}\circ\pi_{\alpha}(\overline{A})$, for $s_0, t_0\in \alpha$ and $\alpha\in\Gamma(\Lambda)$. As the diagram commutes, $\pi^\alpha_{s_0,t_0} \circ \pi_{\alpha} \circ f=\pi^\alpha_{s_0,t_0}\circ\psi_{\alpha}$ for all $s_0  \preceq t_0$ such that $s_0, t_0\in \alpha$, and $\alpha\in\Gamma(\Lambda)$. Hence $A$ is so that $f(A)=\overline{A}$. That is, $f$ is an isomorphism of rings.\\
  By the inverse limit property, $\textup{M}_\Lambda(P)$ is a closed subset of $\prod_{\alpha\in \Gamma(\Lambda)}\textup{M}_{\alpha}(P)$ under the product topology, hence it can be seen as a topological ring.\\
  Notice that, as observed earlier for finite prosets, it then follows that $\textup{M}_\Lambda(P)$ is a closed subset of $\prod_{\alpha\in\Gamma(\Lambda)}P^{\alpha\times \alpha}$. As $\Lambda$ is infinite, we have that $|\Gamma(\Lambda)|=|\Lambda|$. Hence there exists a homeomorphism between $\prod_{\alpha\in\Gamma(\Lambda)}P^{\alpha\times \alpha}$ and $P^{\Lambda\times \Lambda}$ given by a bijection from $\bigsqcup_{\alpha\in \Gamma{(\Lambda)}}\alpha\!\times\! \alpha$ to $\Lambda\!\times\!\Lambda$. That is, the topological ring structure on $\textup{M}_\Lambda(P)$ can be seen as the subspace topology in relation to $P^{\Lambda\times \Lambda}$, as in the finite proset case.
\end{proof}

\begin{proposition}[Lemma 2.2 \cite{Matrices1985}]\label{prop:disjoint union and product rings}
Let $P$ be a ring and $\Lambda=\sqcup_{i\in I}\Lambda_i$ a preordered set. Then $\textup{M}_{\Lambda}(P)\cong \prod_{i\in I}\textup{M}_{\Lambda_i}(P)$. If $P$ is a topological ring then $\textup{M}_{\Lambda}(P)$ is a topological ring with topology given by the product topology on the $\textup{M}_{\Lambda_i}(P)$.
\end{proposition}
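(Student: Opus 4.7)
The plan is to build the isomorphism directly block by block. The crucial observation is that if $s_1 \in \Lambda_i$ and $s_2 \in \Lambda_j$ with $i \neq j$, then by the definition of the disjoint union $s_1$ and $s_2$ are independent, and in particular $s_1 \not\preceq s_2$. Consequently, every $A \in \textup{M}_\Lambda(P)$ must satisfy $A_{s_1,s_2} = 0$ whenever $s_1, s_2$ lie in different components. So a matrix in $\textup{M}_\Lambda(P)$ is nothing more than the assembly of its diagonal blocks indexed by the $\Lambda_i$. I would define the map
$$\Phi : \textup{M}_\Lambda(P) \longrightarrow \prod_{i \in I} \textup{M}_{\Lambda_i}(P), \qquad \Phi(A) = \bigl( A|_{\Lambda_i \times \Lambda_i} \bigr)_{i \in I},$$
and show it is a continuous ring isomorphism whose inverse is also continuous.

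That $\Phi$ respects addition and the multiplicative identity is immediate. For multiplication, I would use that each $\Lambda_i$ is convex, hence closed under intervals: if $s_1, s_2 \in \Lambda_i$ with $s_1 \preceq s_2$, then $[s_1, s_2] \subset \Lambda_i$, and so the sum
$$(AB)_{s_1,s_2} = \sum_{t \in [s_1,s_2]} A_{s_1,t} B_{t,s_2}$$
depends only on entries of $A$ and $B$ lying in $\Lambda_i \times \Lambda_i$. Therefore $\Phi(AB) = \Phi(A)\Phi(B)$ componentwise. Injectivity is immediate, since $\Phi(A)=0$ forces every diagonal block of $A$ to vanish, while the off-diagonal entries were already forced to zero by independence. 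Surjectivity is handled by the obvious reverse construction: given $(B_i)_{i \in I}$, define $A \in P^{\Lambda \times \Lambda}$ by $A_{s_1,s_2} = (B_i)_{s_1,s_2}$ if $s_1, s_2 \in \Lambda_i$ and $A_{s_1,s_2} = 0$ otherwise; this $A$ has support inside $\bigcup_i (\Lambda_i \times \Lambda_i) \cap \{(s_1,s_2) : s_1 \preceq s_2\}$ and is sent by $\Phi$ to $(B_i)_{i \in I}$.

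For the topological statement, both sides carry subspace topologies coming from products of copies of $P$. The set-theoretic decomposition
$$\Lambda \times \Lambda = \bigsqcup_{i \in I} (\Lambda_i \times \Lambda_i) \;\sqcup\; \bigsqcup_{i \neq j} (\Lambda_i \times \Lambda_j)$$
yields a homeomorphism $P^{\Lambda \times \Lambda} \cong \prod_i P^{\Lambda_i \times \Lambda_i} \times \prod_{i \neq j} P^{\Lambda_i \times \Lambda_j}$, under which $\textup{M}_\Lambda(P)$ corresponds exactly to $\prod_i \textup{M}_{\Lambda_i}(P) \times \{0\}$. Thus $\Phi$ is the restriction of this projection and is a homeomorphism onto $\prod_i \textup{M}_{\Lambda_i}(P)$.

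I do not expect any serious obstacle here: the only step requiring care is verifying multiplicativity, which hinges precisely on the fact that convex subsets are closed under intervals so that the finite sum defining $(AB)_{s_1,s_2}$ stays within a single block. Everything else is a matter of transcribing the block decomposition to topology via the product structure.
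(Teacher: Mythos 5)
Your argument is correct. Note that the paper itself gives no proof of this proposition at all: it is quoted as Lemma 2.2 of the cited 1985 reference, so there is nothing to compare against line by line. Your direct block-decomposition proof is the natural one, and every step checks out: independence of elements in distinct components forces the off-diagonal blocks to vanish; convexity of each $\Lambda_i$ (closure under intervals) guarantees that $[s_1,s_2]\subset\Lambda_i$ whenever $s_1\preceq s_2$ lie in $\Lambda_i$, which is exactly what makes $\Phi$ multiplicative; and the splitting of the index set $\Lambda\times\Lambda$ identifies the subspace topology on $\textup{M}_\Lambda(P)$ with the product of the subspace topologies on the $\textup{M}_{\Lambda_i}(P)$. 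The only point worth making explicit is the last clause of the statement: to conclude that $\textup{M}_\Lambda(P)$ is a \emph{topological ring} you still need each factor $\textup{M}_{\Lambda_i}(P)$ to be one (a product of topological rings is a topological ring, but the homeomorphism alone does not give continuity of multiplication). This is supplied by taking the $\Lambda_i$ to be the irreducible components (Proposition \ref{prop:making prosets into disjoint parts}) and invoking Theorem \ref{Prop: prodiscrete ring} for each, which is how the paper uses the two results in tandem; a one-line remark to that effect would close the argument completely.
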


Notice that Theorem \ref{Prop: prodiscrete ring} and Proposition \ref{prop:disjoint union and product rings} implies that all matrix rings indexed by prosets are limits of finite matrix rings. Both results then imply that the topology given to $\textup{M}_{\Lambda}(P)$ in relation to $\textup{P}^{\Lambda\times \Lambda}$ makes $\textup{M}_{\Lambda}(P)$ into a topological ring. More than that, this is the topology given by the inverse limit.\\
  For an example, let $\mathbf{2}\!<$ be our poset and $P$ be any ring. We can define an isomorphism of rings from $f:\textup{M}_{\mathbf{2}<\sqcup \mathbf{2}<}(P)\rightarrow \textup{M}_{\mathbf{2}<}(P)\times \textup{M}_{\mathbf{2}<}(P)$ as follows:
$$\left(\begin{array}{cccc}
        A_{0,0} & A_{0,1} & 0 & 0 \\
        0 & A_{1,1} & 0 & 0 \\
    0 & 0 & A_{3,3} & A_{3,4} \\
        0 & 0 & 0 & A_{4,4}
\end{array}\right)\mapsto \left(\begin{array}{cc}
        A_{0,0} & A_{0,1} \\
        0 & A_{1,1} 
\end{array}\right)\times \left(\begin{array}{cc}
    A_{0',0'} & A_{0',1'} \\
        0 & A_{1',1'}
\end{array}\right).$$



\subsection{Topological structure}
We now use Theorem \ref{Prop: prodiscrete ring} and Proposition \ref{prop:disjoint union and product rings} to prove properties of the rings as topological spaces. Given $P$ a topological ring, $\Lambda$ a proset and $S\subset P$ a non-empty subset such that $0\in S$, we define $$\textup{M}_\Lambda(S):=\left\{ A\in \textup{M}_\Lambda(P): A_{s_1,s_2}\in S \text{ for all }s_1, s_2\in \Lambda \right\}\subset \textup{M}_\Lambda(P).$$
\begin{remark}\label{remark:nets and open}
    Let $\{X_i\}_{i\in I}$ be a collection of topological spaces. Define $X=\prod_{i\in I}X_i$ a topological space given by the product topology on the sets $X_i$. For each $i\in I$, let $\pi_i:X\rightarrow X_i$ be the projection on the $i$-th coordinate. Then:
    \begin{itemize}
        \item The subsets $O\subset X$ such that for all $i\in I$ the set $\pi_i(O)$ is an open subset of $X_i$, and there is a finite subset $J\subset I$ such that if $i\in I\backslash J$ then $\pi_i(O)=X_i$ form a basis of open subsets of $\prod_{i\in I}X_i$.
        \item Given $\{x_a\}_{a\in A}$ a net in $X$ converges, it converges to $x$ if, and only if, for every $i\in I$ the net $\{\pi_i(x_a)\}_{a\in A}$ converges to $\pi_i(x)$ in $X_i$. 
    \end{itemize}
\end{remark}
The following results follow directly from Remark \ref{remark:nets and open} and the topology given on $\textup{M}_\Lambda(P)$ as a topological subspace of $P^{\Lambda\times \Lambda}$.

\begin{corollary}\label{coro:classifying basis}
Let $P$ be a topological ring, $\{S_\sigma\}_{\sigma\in\Sigma}$ a basis of the neighbourhood of $0$ and $\Lambda$ a locally proset. Then $\{I^\Lambda_{\alpha}(P)+\textup{M}_\Lambda(S_\sigma)\}_{\alpha\in \Gamma(\Lambda), \sigma\in\Sigma}$ is a basis of the neighbourhood of $0$ in $\textup{M}_\Lambda(P)$.
\end{corollary}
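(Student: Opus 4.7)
The strategy is to first rewrite $I^{\Lambda}_{\alpha}(P)+\textup{M}_\Lambda(S_\sigma)$ in a more transparent coordinate form, then verify that these sets are open and that every neighbourhood of $0$ contains one of them. The plan is as follows.

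First I would establish the identity
\[
I^{\Lambda}_{\alpha}(P)+\textup{M}_\Lambda(S_\sigma)=\bigl\{A\in \textup{M}_\Lambda(P) : A_{t_1,t_2}\in S_\sigma \text{ for all } t_1,t_2\in\alpha\bigr\}.
\]
The inclusion $\subset$ is immediate from the definitions of $I^{\Lambda}_{\alpha}(P)$ and $\textup{M}_\Lambda(S_\sigma)$. For $\supset$, given $A$ in the right-hand side I split it as $A=B+C$ where $C_{t_1,t_2}=A_{t_1,t_2}$ when $t_1,t_2\in\alpha$ and $C_{t_1,t_2}=0$ otherwise; then $C\in \textup{M}_\Lambda(S_\sigma)$ because $0\in S_\sigma$, and $B=A-C$ vanishes on $\alpha\times\alpha$, so $B\in I^{\Lambda}_{\alpha}(P)$.

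Next I would verify that each such set is open. Viewing $\textup{M}_\Lambda(P)$ with its subspace topology inside $P^{\Lambda\times \Lambda}$, the condition $A_{t_1,t_2}\in S_\sigma$ for a single pair $(t_1,t_2)$ defines an open subset as the preimage of $S_\sigma$ under the coordinate projection. Since $\alpha$ is finite, the above equality exhibits $I^{\Lambda}_{\alpha}(P)+\textup{M}_\Lambda(S_\sigma)$ as a finite intersection of open sets, hence open. Moreover $0$ lies in it, so it is a neighbourhood of $0$.

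For the basis property, let $U$ be an arbitrary open neighbourhood of $0$ in $\textup{M}_\Lambda(P)$. By Remark \ref{remark:nets and open}, $U$ contains a basic open neighbourhood of $0$ of the form $\{A : A_{s_i,t_i}\in V_i,\ i=1,\dots,n\}$ for finitely many coordinates $(s_i,t_i)$ and open neighbourhoods $V_i$ of $0$ in $P$. Since $\{S_\sigma\}_{\sigma\in\Sigma}$ is a neighbourhood basis at $0$, choose $\sigma$ with $S_\sigma\subset\bigcap_{i=1}^n V_i$. Applying Proposition \ref{prop:finite subsets into finite convex} on each irreducible component of $\Lambda$ hit by the finite set $F=\{s_1,t_1,\dots,s_n,t_n\}$ (equivalently, invoking Corollary \ref{coro:finite sets into finite locally convex}), produce a finite convex enlargement $\alpha$ containing $F$. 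Then for any $A\in I^{\Lambda}_{\alpha}(P)+\textup{M}_\Lambda(S_\sigma)$, the first step gives $A_{s_i,t_i}\in S_\sigma\subset V_i$ for each $i$, so $A\in U$. This shows the collection is cofinal in the neighbourhood filter of $0$, completing the proof.

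The routine manipulations of ideals are the bulk of the work; the only subtle point is ensuring that an arbitrary finite collection of coordinates can be enclosed in a single $\alpha\in\Gamma(\Lambda)$ (or, if $\Lambda$ is reducible, in a finite union of convex sets, one per component, which is handled uniformly by Corollary \ref{coro:finite sets into finite locally convex}). I do not expect any serious obstacle beyond this bookkeeping.
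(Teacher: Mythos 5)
Your proposal is the argument the paper has in mind: the paper offers no written proof beyond the assertion that the corollary ``follows directly from Remark \ref{remark:nets and open}'' and the subspace topology, and your three steps (coordinate description of $I^{\Lambda}_{\alpha}(P)+\textup{M}_\Lambda(S_\sigma)$, openness from finiteness of $\alpha$, cofinality against the product-topology basis) fill in exactly those details. The decomposition $A=B+C$ and the reduction to finitely many coordinates are both correct.

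One step does not go through as written, and it is worth flagging because the statement itself inherits the same imprecision. When $\Lambda$ is reducible and the finite coordinate set $F$ meets more than one component, there is \emph{no} ``finite convex enlargement $\alpha$ containing $F$'': convexity requires a path of intervals inside $\alpha$ joining any two of its points, which is impossible across independent components, so the set you build by applying Proposition \ref{prop:finite subsets into finite convex} componentwise is a finite \emph{locally convex collection} $\{\alpha_i\}$, not an element of $\Gamma(\Lambda)$. The corresponding ideal is $I^{\Lambda}_{\{\alpha_i\}}$ rather than $I^{\Lambda}_{\alpha}$, and with that substitution your containment argument is fine (entries indexed by distinct components vanish automatically, so the coordinate description you prove in the first step still holds). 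Concretely, for $\Lambda=\{a\}\sqcup\{b\}$ one has $\textup{M}_\Lambda(P)\cong P\times P$, and no set of the form $I^{\Lambda}_{\alpha}(P)+\textup{M}_\Lambda(S_\sigma)$ with $\alpha$ a single convex subset is contained in $S_1\times S_2$; so for reducible $\Lambda$ the indexing family in the statement must be enlarged to locally convex collections. For irreducible $\Lambda$ your proof is complete as written. A last cosmetic point: the $S_\sigma$ need only be neighbourhoods of $0$, not open sets, so ``preimage of $S_\sigma$ is open'' should read ``is a neighbourhood of $0$''; this costs nothing since that is all the basis property requires.
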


\begin{corollary}\label{coro:open iff matrices}
Let $P$ be a topological ring and $\Lambda$ a locally finite proset. Given $J$ a two-sided open ideal of $P$ and $\{\Lambda_i\}_{i\in I}\subset \Lambda$ locally convex, then the following are true:
\begin{enumerate}
    \item The ideals $I^{\Lambda}_{\{\Lambda_i\}_{i\in I}}$ and $\textup{M}_\Lambda(J)$ are closed in $\textup{M}_\Lambda(P)$.
    \item The ideal $I^\Lambda_{\{\Lambda_i\}_{i\in I}}$ is open if, and only if, $\bigsqcup_{i\in I}\Lambda_i$ is finite and $P$ has the discrete topology.
    \item The ideal $\textup{M}_\Lambda(J)$ is open if, and only if, $\Lambda$ is finite.
\end{enumerate}
\end{corollary}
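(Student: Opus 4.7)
The plan is to handle the three items separately, using three standard ingredients: the basis of neighborhoods of $0$ given by Corollary \ref{coro:classifying basis}, the continuity of the coordinate projections $\pi_{t_1,t_2}\colon\textup{M}_\Lambda(P)\to P$ (which follows because $\textup{M}_\Lambda(P)$ carries the subspace topology of $P^{\Lambda\times\Lambda}$), and test matrices of the form $xe_{s,s}$ with single entry $x\in P$ at position $(s,s)$ and zero elsewhere.

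For item (1), I would observe that $I^\Lambda_{\{\Lambda_i\}_{i\in I}}$ is the kernel of the continuous quotient $\textup{M}_\Lambda(P)\to\prod_{i\in I}\textup{M}_{\Lambda_i}(P)$, whose target is Hausdorff, so $\{0\}$ is closed there and hence so is the kernel. For $\textup{M}_\Lambda(J)$ I would first invoke the standard fact that every open subgroup of a Hausdorff topological group is closed (its complement is a union of open cosets), so $J\subseteq P$ is closed; then $\textup{M}_\Lambda(J)=\bigcap_{t_1\preceq t_2}\pi_{t_1,t_2}^{-1}(J)$ is an intersection of closed sets.

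For item (2), the $(\Leftarrow)$ direction is easy: when $\bigsqcup_i\Lambda_i$ is finite, at most finitely many $\Lambda_i$ are non-empty and the quotient $\prod_i\textup{M}_{\Lambda_i}(P)$ is a finite product of finite matrix rings over a discrete $P$, hence discrete, so $I^\Lambda_{\{\Lambda_i\}}$ is the continuous preimage of $\{0\}$ and is therefore open. For $(\Rightarrow)$, openness of the ideal yields a basic neighborhood $I^\Lambda_\alpha+\textup{M}_\Lambda(S_\sigma)\subseteq I^\Lambda_{\{\Lambda_i\}}$ of $0$. For any $s\in\bigsqcup_i\Lambda_i$, since $s\preceq s$ lies in some $\Lambda_i$, every element of $I^\Lambda_{\{\Lambda_i\}}$ vanishes at $(s,s)$; testing against $xe_{s,s}\in\textup{M}_\Lambda(S_\sigma)$ for $x\in S_\sigma$ then forces $S_\sigma=\{0\}$, and hence $P$ is discrete. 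If in addition some $s\in\bigsqcup_i\Lambda_i$ lay outside $\alpha$, then $e_{s,s}\in I^\Lambda_\alpha\setminus I^\Lambda_{\{\Lambda_i\}}$, a contradiction; so $\bigsqcup_i\Lambda_i\subseteq\alpha$ is finite.

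Item (3) is parallel. The $(\Leftarrow)$ direction is immediate because for finite $\Lambda$ the set $\textup{M}_\Lambda(J)$ is cut out by finitely many open coordinate conditions in $P^{\Lambda\times\Lambda}$. For $(\Rightarrow)$, implicitly assuming $J$ proper (else $\textup{M}_\Lambda(J)=\textup{M}_\Lambda(P)$ is trivially open with no constraint on $\Lambda$), a basic neighborhood $I^\Lambda_\alpha+\textup{M}_\Lambda(S_\sigma)\subseteq\textup{M}_\Lambda(J)$ together with any hypothetical $s\in\Lambda\setminus\alpha$ would give $e_{s,s}\in I^\Lambda_\alpha\subseteq\textup{M}_\Lambda(J)$, forcing $1\in J$; hence $\Lambda=\alpha$ is finite. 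The one recurring bookkeeping point, and the only real technicality, is the implication $s\notin\alpha\Rightarrow e_{s,s}\in I^\Lambda_\alpha$: since $\alpha$ is closed under intervals, no such $s$ lies in any interval $[t_1,t_2]\subseteq\alpha$, so the $(s,s)$-entry is untouched by each defining condition of $I^\Lambda_{[t_1,t_2]}$.
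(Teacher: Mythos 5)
Your proof is correct and follows exactly the route the paper intends: the paper offers no explicit argument, merely asserting that the corollary ``follows directly'' from Remark \ref{remark:nets and open} and the subspace topology inherited from $P^{\Lambda\times\Lambda}$, and your write-up supplies precisely those details via the basis from Corollary \ref{coro:classifying basis}, continuity of coordinate projections, and the test matrices $xe_{s,s}$. Your observations that item (3) implicitly requires $J$ proper and that $s\notin\alpha$ forces $e_{s,s}\in I^\Lambda_\alpha$ because $\alpha$ is closed under intervals are both correct and are refinements the paper leaves unstated.
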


\begin{corollary}\label{coro:nets in these rings}
Let $P$ be a topological ring, $\Lambda$ an irreducible proset and $\{A^{(i)}\}_{i\in I}\subset \textup{M}_\Lambda(P)$ a net. The net is convergent if, and only if, for every $\alpha\in\Gamma(\Lambda)$ we have that $\{\pi_{\alpha}(A^{(i)})\}_{i\in I}$ is convergent, and the sequence congerges to $A$ defined coordinate-wise as follows:
$$A_{s_0,t_0}:=\left\{\begin{array}{ll}
     \lim_{i\in I}A^{(i)}_{s_0,t_0}, &  \mbox{if } s_0  \preceq t_0\in \Lambda \\
     0, &  \mbox{otherwise.}
 \end{array}
\right.$$
\end{corollary}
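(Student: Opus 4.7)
The plan is to reduce the statement to the characterization of net convergence in a product space (Remark \ref{remark:nets and open}) by passing through two equivalent descriptions of the topology on $\textup{M}_\Lambda(P)$ given earlier: the inverse limit description from Theorem \ref{Prop: prodiscrete ring} and the subspace description coming from $\textup{M}_\Lambda(P)\subset P^{\Lambda\times \Lambda}$. I will prove the two directions separately and then verify that the limit has the stated coordinate-wise form.

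For the forward implication, I would just invoke continuity. If $\{A^{(i)}\}_{i\in I}$ converges to some $B\in \textup{M}_\Lambda(P)$, then for each $\alpha\in \Gamma(\Lambda)$ the map $\psi_\alpha$ (equivalently $\pi_\alpha$ under the isomorphism $f$ of Theorem \ref{Prop: prodiscrete ring}) is a continuous ring homomorphism, so $\{\pi_\alpha(A^{(i)})\}_{i\in I}$ converges to $\pi_\alpha(B)$ in $\textup{M}_\alpha(P)$.

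For the backward implication, assume $\{\pi_\alpha(A^{(i)})\}_{i\in I}$ converges in $\textup{M}_\alpha(P)$ for every $\alpha\in\Gamma(\Lambda)$. Since $\textup{M}_\alpha(P)$ carries the product topology on $P^{\alpha\times\alpha}$, this means that for every $s_0,t_0\in\alpha$ with $s_0\preceq t_0$ the net $\{A^{(i)}_{s_0,t_0}\}_{i\in I}$ converges in $P$. Using Proposition \ref{prop:finite subsets into finite convex}, every pair $s_0\preceq t_0$ in $\Lambda$ is contained in some $\alpha\in\Gamma(\Lambda)$, so the coordinate limit $\lim_{i\in I}A^{(i)}_{s_0,t_0}$ exists for every comparable pair; for non-comparable pairs the coordinate is identically zero, hence trivially convergent to $0$. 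Define $A$ coordinate-wise as in the statement. By Remark \ref{remark:nets and open}, $\{A^{(i)}\}_{i\in I}$ converges to $A$ in the ambient product $P^{\Lambda\times\Lambda}$; since $A$ obviously lies in $\textup{M}_\Lambda(P)$ by construction, and since $\textup{M}_\Lambda(P)$ has the subspace topology from $P^{\Lambda\times \Lambda}$ (as established in the last paragraph of the proof of Theorem \ref{Prop: prodiscrete ring}), convergence holds in $\textup{M}_\Lambda(P)$.

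The only mildly delicate point is verifying \emph{coherence} of the coordinate limits across different choices of $\alpha$: when $s_0,t_0\in\alpha_1\cap\alpha_2$, one must know the two limits of $A^{(i)}_{s_0,t_0}$ obtained inside $\textup{M}_{\alpha_1}(P)$ and $\textup{M}_{\alpha_2}(P)$ agree. This is immediate from $\pi^{\alpha_1}_{s_0,t_0}\circ\psi_{\alpha_1}=\pi^{\alpha_2}_{s_0,t_0}\circ\psi_{\alpha_2}$ (the commuting triangle used in the proof of Theorem \ref{Prop: prodiscrete ring}) applied inside $P$, so the same value is extracted either way. Once this is noted, the rest is formal: everything reduces to Remark \ref{remark:nets and open} together with the topological identifications already set up.
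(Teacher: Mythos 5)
Your argument is correct and is exactly the route the paper intends: the paper states this corollary as following directly from Remark \ref{remark:nets and open} together with the identification of the topology on $\textup{M}_\Lambda(P)$ as the subspace topology inside $P^{\Lambda\times\Lambda}$, and your write-up is just the natural expansion of that (with the coherence point you flag being automatic from uniqueness of limits in the Hausdorff ring $P$). No gaps.
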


We can also easily describe a dense subset:\\
\textbf{Notation}:
Given $P$ a ring and $\Lambda$ a proset, we denote the following elements from $\textup{M}_\Lambda(P)$ as:
\begin{itemize}
    \item For $p\in P$ we define the element $p^{\Lambda}$ coordinatewise as: $$p^{\Lambda}_{s_1,s_2}:=\left\{\begin{array}{ll}
     p, &  \mbox{if } s_1=s_2\in \Lambda \\
     0, &  \mbox{otherwise.}
 \end{array}\right.$$
    \item For $S\subset \Lambda$ a subset we define the element $1^{S}$ coordinatewise as: $$1^{S}_{s_1,s_2}:=\left\{\begin{array}{ll}
     1, &  \mbox{if } s_1=s_2\in S \\
     0, &  \mbox{otherwise.}
 \end{array}\right.$$
    \item For $t_1  \preceq t_2\in \Lambda$ we define the element $e^{(t_1,t_2)}$ coordinatewise as: $$e^{(t_1,t_2)}_{s_1,s_2}:=\left\{\begin{array}{ll}
     1, &  \mbox{if } s_1=t_1 \text{ and }s_2=t_2 \\
     0, &  \mbox{otherwise.}
 \end{array}\right.$$ 
\end{itemize}

Notice that for every $s\in \Lambda$, by definition, $e^{(s,s)}=1^{\{s\}}$. Given $P$ a ring and $S\subset P$ a subset we will define $\langle S\rangle$ the minimal subring of $P$ containing $S$.

\begin{proposition}\label{prop:dense subset rings}\label{defi:dense subset}
Let $P$ be a topological ring, $S\subset P$ a dense subset of $P$ and $\Lambda$ a proset. Then $$\textup{M}_\Lambda(P)=\overline{\langle \{p^{\Lambda}\}_{p\in S}, \{1^{\{t\}}\}_{t\in \Lambda}, \{e^{(s_1,s_2)}\}_{s_1  \precnapprox_2\in\Lambda} \rangle}=\overline{\langle \{p^{\Lambda}\}_{p\in S}, \{e^{(s_1,s_2)}\}_{s_1  \preceq s_2\in\Lambda} \rangle}.$$
\end{proposition}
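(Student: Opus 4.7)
The plan is to prove density directly using the basis of neighborhoods from Corollary \ref{coro:classifying basis}. The second equality in the statement is essentially bookkeeping: since $e^{(t,t)} = 1^{\{t\}}$ by definition, the generators $\{1^{\{t\}}\}_{t \in \Lambda}$ are automatically present in the subring generated by $\{p^{\Lambda}\}_{p \in S} \cup \{e^{(s_1,s_2)}\}_{s_1 \preceq s_2}$; conversely, adding $\{1^{\{t\}}\}$ to the generators $\{e^{(s_1,s_2)}\}_{s_1 \precnapprox s_2}$ recovers exactly the diagonal elements $e^{(t,t)}$. So both subrings coincide, and I only need to show density of the second one (call it $R$).

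Let $A \in \textup{M}_\Lambda(P)$ and fix a basic open neighborhood of $A$, which by Corollary \ref{coro:classifying basis} can be written as $A + I^\Lambda_\alpha(P) + \textup{M}_\Lambda(S_\sigma)$ for some $\alpha \in \Gamma(\Lambda)$ and $S_\sigma$ a neighborhood of $0$ in $P$. I need to produce $B \in R$ lying in this neighborhood. The key algebraic observation is that for any $p \in P$ and any $t_1 \preceq t_2$ in $\Lambda$, a direct coordinatewise computation gives $p^\Lambda \cdot e^{(t_1,t_2)} = p \cdot e^{(t_1,t_2)}$, i.e., the matrix whose only nonzero entry is $p$ in position $(t_1,t_2)$. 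This means that if $p_{t_1,t_2} \in S$ are any scalars chosen for each of the finitely many pairs $t_1 \preceq t_2$ in $\alpha$, the finite sum
\[
B := \sum_{\substack{t_1 \preceq t_2 \\ t_1, t_2 \in \alpha}} p_{t_1,t_2}^\Lambda \cdot e^{(t_1,t_2)}
\]
belongs to $R$ and has $B_{t_1,t_2} = p_{t_1,t_2}$ on this block, with all other entries zero.

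To finish, for each $t_1 \preceq t_2 \in \alpha$ I use density of $S \subset P$ (and the fact that in a topological ring $-S_\sigma$ is also a neighborhood of $0$, so after shrinking we may assume $S_\sigma$ is symmetric) to choose $p_{t_1,t_2} \in S$ with $A_{t_1,t_2} - p_{t_1,t_2} \in S_\sigma$. Then $(A - B)_{t_1,t_2} \in S_\sigma$ for all $t_1, t_2 \in \alpha$ (the case $t_1 \not\preceq t_2$ is automatic since both sides have a zero there and $0 \in S_\sigma$). Writing $A - B = Y + Z$, where $Z$ has the entries of $A-B$ on the $\alpha \times \alpha$ block and zeros elsewhere (so $Z \in \textup{M}_\Lambda(S_\sigma)$), and $Y = A - B - Z$ vanishes on $\alpha \times \alpha$ (so $Y \in I^\Lambda_\alpha(P)$), exhibits $A - B \in I^\Lambda_\alpha(P) + \textup{M}_\Lambda(S_\sigma)$ as required.

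No step here is especially subtle; the main care is just in matching the approximation to the basis of Corollary \ref{coro:classifying basis} and in verifying the multiplication formula $p^\Lambda \cdot e^{(t_1,t_2)}$. The latter is what makes a single finite sum suffice, avoiding any convergence issue in the possibly infinite ring.
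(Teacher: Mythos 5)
Your proof is correct, and at its core it is the same idea as the paper's: approximate $A$ by its truncation to a finite convex block $\alpha\in\Gamma(\Lambda)$, each such truncation being a finite sum of the generators. The packaging differs in a way worth noting. The paper forms the net $\{A^{(\alpha)}\}_{\alpha\in\Gamma(\Lambda)}$ of exact truncations and invokes Corollary \ref{coro:nets in these rings} to get convergence; since the entries $A_{s_1,s_2}$ need not lie in $S$, it must first observe that $\{p^\Lambda\}_{p\in P}\subset\overline{R}$ and work in the closure (its claim that the $A^{(\alpha)}$ are ``all in $R$'' is slightly loose for exactly this reason, and the net corollary it cites is stated only for irreducible $\Lambda$). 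You instead test against the explicit neighbourhood basis of Corollary \ref{coro:classifying basis} and approximate each of the finitely many entries over $\alpha$ by an element of $S$ in the same step, so your approximant $B=\sum p_{t_1,t_2}^{\Lambda}e^{(t_1,t_2)}$ lies in the subring $R$ itself and no irreducibility or separate density-of-$\{p^\Lambda\}_{p\in P}$ argument is needed. The identity $p^\Lambda e^{(t_1,t_2)}=p\cdot e^{(t_1,t_2)}$ and the decomposition $A-B=Y+Z$ with $Y\in I^\Lambda_\alpha(P)$ and $Z\in\textup{M}_\Lambda(S_\sigma)$ both check out (the symmetrization of $S_\sigma$ is the right way to pass from $A-B$ to $B-A$), and your treatment of the second equality via $e^{(t,t)}=1^{\{t\}}$ matches the paper's implicit reasoning.
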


\begin{proof}
Define $R:= \langle \{p^{\Lambda}\}_{p\in S}, \{1^{\{t\}}\}_{t\in \Lambda}, \{e^{(s_1,s_2)}\}_{s_1  \preceq s_2\in\Lambda} \rangle$ and $\overline{R}$ the closure of $R$ in $\textup{M}_\Lambda(P)$. Because $S\subset P$ is dense it is clear that $\{p^{\Lambda}\}_{p\in P}\subset \overline{R}$, hence we can use these elements to write our convergent nets.\\
  This result is clearly true for $\Lambda$ finite, as the subring $R$ contains all the elements with finitely many non-zero coordinates. Assume $\Lambda$ is infinite and let $\Gamma(\Lambda)$ be as in Theorem \ref{Prop: prodiscrete ring}. For each $\alpha\in \Gamma(\Lambda)$ define 
$$A^{(\alpha)}_{s_1,s_2}:=\left\{\begin{array}{ll}
     A_{s_1,s_2}, &  \mbox{if } s_1, s_2\in\alpha \\
     0, &  \mbox{otherwise.}
 \end{array}\right.$$
 It is clear that these are all in $R$. By Corollary \ref{coro:nets in these rings} it follows that this net is convergent and it converges to $A$. Hence $\overline{R}=\textup{M}_{\Lambda}(P)$.
\end{proof}

\begin{corollary}
Let $\Lambda$ be a proset and $P$ a topological ring. Then $\textup{M}_\Lambda(P)$ is second countable if, and only if, $P$ is second countable and $\Lambda$ is countable.
\end{corollary}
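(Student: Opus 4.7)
My plan is to exploit the earlier identification of $\textup{M}_\Lambda(P)$ with the subspace topology inherited from $P^{\Lambda\times\Lambda}$ equipped with the product topology, as established after Theorem \ref{Prop: prodiscrete ring} and Proposition \ref{prop:disjoint union and product rings}. Both directions then reduce to standard facts about second countability of product spaces and of their subspaces.

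For the sufficiency direction, I would observe that if $\Lambda$ is countable then so is $\Lambda\times\Lambda$. A countable product of second countable spaces is itself second countable (take the basis of products $\prod B_{(s,t)}$ with $B_{(s,t)}$ in a fixed countable basis of $P$ and equal to $P$ for all but finitely many coordinates), so $P^{\Lambda\times\Lambda}$ is second countable. Since second countability is hereditary, the subspace $\textup{M}_\Lambda(P)$ inherits this property.

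For the necessary direction I would split the argument in two. To obtain that $P$ is second countable, I embed $P$ topologically into $\textup{M}_\Lambda(P)$ via the map $p\mapsto p^\Lambda$: this is continuous coordinatewise, and composing with the projection $\pi^{\Lambda}_{s_0,s_0}$ for any fixed $s_0\in\Lambda$ recovers $p$ continuously, so $P$ is homeomorphic to a subspace of $\textup{M}_\Lambda(P)$, hence second countable. To obtain that $\Lambda$ is countable, I use that $P$ is Hausdorff and non-trivial to pick an open neighbourhood $W$ of $1$ in $P$ with $0\notin W$. For each $s\in\Lambda$ the set
$$V_s := \{A\in\textup{M}_\Lambda(P): A_{s,s}\in W\} = (\pi^{\Lambda}_{s,s})^{-1}(W)$$
is open, contains $e^{(s,s)}$, but excludes $e^{(t,t)}$ for every $t\neq s$, since $e^{(t,t)}_{s,s}=0\notin W$. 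Given a countable basis $\mathcal{B}$ for $\textup{M}_\Lambda(P)$, for each $s$ choose $B_s\in\mathcal{B}$ with $e^{(s,s)}\in B_s\subset V_s$; the assignment $s\mapsto B_s$ must be injective, because $B_s=B_t$ with $s\neq t$ would place $e^{(t,t)}$ in $V_s$, contradicting the previous sentence. Hence $\Lambda$ injects into $\mathcal{B}$ and is countable.

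The only delicate step is the separation argument producing the open sets $V_s$ that distinguish the diagonal idempotents $e^{(s,s)}$ from one another: it depends crucially on the standing assumptions that $P$ is non-trivial (so $0\neq 1$) and Hausdorff (so that a $W$ with $1\in W$ and $0\notin W$ exists). Everything else is a direct consequence of the hereditary nature of second countability combined with the subspace/product-topology description of $\textup{M}_\Lambda(P)$.
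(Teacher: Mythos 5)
Your proof is correct, but it does not follow the paper's route, and the comparison is worth making. For the forward direction the paper invokes Proposition \ref{prop:dense subset rings}: a countable dense subset $S\subset P$ together with countable $\Lambda$ yields a countable dense subring of $\textup{M}_\Lambda(P)$, and the paper concludes second countability from there. You instead argue that $P^{\Lambda\times\Lambda}$ is a countable product of second countable spaces, hence second countable, and that second countability is hereditary to the subspace $\textup{M}_\Lambda(P)$. Your version is the safer one: a countable dense set gives separability, which does not imply second countability for general topological spaces, so the product-topology argument is the one that actually closes the implication (and it also quietly covers the case where $\Lambda$ is finite). For the converse, the paper observes that the diagonal subring $\{A: A_{s_1,s_2}=0 \text{ for } s_1\neq s_2\}\cong\prod_{s\in\Lambda}P$ is closed in $\textup{M}_\Lambda(P)$ and appeals to the standard characterisation of second countability for powers of a non-trivial space. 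You unpack that same content explicitly: the embedding $p\mapsto p^\Lambda$ recovers second countability of $P$ by heredity, and your separation argument with the idempotents $e^{(s,s)}$ and the neighbourhood $W$ of $1$ avoiding $0$ injects $\Lambda$ into a countable basis. Both halves of your converse are sound, and you correctly flag that the counting argument uses the standing assumptions that $P$ is non-trivial and Hausdorff — exactly the hypotheses hidden inside the paper's appeal to the product criterion. In short, the two proofs exploit the same structural facts (the subspace description of the topology and the diagonal copy of $\prod_{s\in\Lambda}P$), but yours replaces the density argument by a direct basis count, which is both more elementary and more airtight.
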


\begin{proof}
If $P$ is second countable then there is $S\subset P$ a countable dense subset. By Proposition \ref{prop:dense subset rings} it follows that the set $\langle \{p^{\Lambda}\}_{p\in S}, \{1^{\{t\}}\}_{t\in \Lambda}, \{e^{(s_1,s_2)}\}_{s_1  \preceq s_2\in\Lambda} \rangle$ is dense in $\textup{M}_\Lambda(P)$. Because $S$ and $\Lambda$ are countable it follows that the dense subring $\langle \{p^{\Lambda}\}_{p\in S}, \{1^{\{t\}}\}_{t\in \Lambda}, \{e^{(s_1,s_2)}\}_{s_1  \preceq s_2\in\Lambda} \rangle$ is also countable, hence $\textup{M}_\Lambda(P)$ is also second countable.\\
  For the other side, notice that $\{A\in \textup{M}_\Lambda(P): \text{if } s_1\neq s_2\text{ then } A_{s_1,s_2}=0\}\cong\prod_{s\in \Lambda} P$ is a closed subring of $\textup{M}_\Lambda(P)$, and this subring is second countable if, and only if, $P$ is second countable and $\Lambda$ is countable.
\end{proof}

\section{Categorical properties}\label{Categorial properties}

\subsection{The functor \texorpdfstring{$\mathbf{M}_\_(P)$}{TEXT}}

In this section we will show that this can be extended to defining a contravariant functor from a category of partially ordered sets to the category of rings, showing that even products and inverse limits of these rings can be understood just by looking at limits and coproducts on the category or partially ordered sets. This question arise naturally from Proposition \ref{prop:disjoint union and product rings} and Theorem \ref{Prop: prodiscrete ring}, which points to the direction that coproducts are mapped to products and colimits to limits under such map.\\
  For this section we will need to restrict the maps we accept for the category of ordered sets, as not all maps would work. We then give arguments for why each condition is added.

\begin{definition}\label{defi:morphisms of prosets/posets}\index{order preserving map|ndx}\index{convex embedding|ndx}\index{FCC map|ndx}
Given $\Lambda, \Lambda'$ two prosets and $f:\Lambda\rightarrow\Lambda'$ a set function. We say that:
\begin{itemize}
    \item $f$ is \textbf{order preserving} if for $a\preceq b\in \Lambda$ then $f(a)\preceq f(b)\in \Lambda'$.
    \item $f$ is a \textbf{convex function} if it is an order preserving function and whenever $\alpha\subset\Lambda$ is a convex subset then $f(\alpha)$ is a convex subset of $\Lambda'$.
    \item $f$ is a \textbf{convex embedding} if it is a convex function and injective.
    \item $f$ is \textbf{constant}\index{constant|ndx} if $f(\Lambda)=\{t\}$, for some $t\in\Lambda'$
    \item Given that $\sqcup_{i\in I}\Lambda_i$ is the decomposition of $\Lambda$ into its components. we say $f$ \textbf{factors out into constant functions and convex embeddings (FCC)} if for each $i\in I$ either the function $f|_{\Lambda_i}:\Lambda_i\rightarrow\Lambda'$ is a constant function or a convex embedding.
\end{itemize}
\end{definition}
Let $f:\Lambda_1\rightarrow \Lambda_2$ be an order preserving morphism. A first idea for a map $\Tilde{f}:\textup{M}_{\Lambda_1}(P)\rightarrow \textup{M}_{\Lambda_2}(P)$ is as follows:
$$\begin{array}{llll}
    \tilde{f}:& \textup{M}_{\Lambda_2}(P) &\longrightarrow &  \textup{M}_{\Lambda_1}(P) \\
     & A & \longmapsto & \tilde{f}(A)
\end{array}$$
where $\tilde{f}(A)_{s_1,s_2}=A_{f(s_1),f(s_2)}$.

\begin{enumerate}
    \item If $f$ is injective, but not surjective, then $\Tilde{f}(1^{\Lambda_1})\neq 1^{\Lambda_2}$, hence it is not a ring homomorphism. For example:
$$f:2<\rightarrow 3<$$
$$f(0)=0,  f(1)=2$$
then
$$\Tilde{f}:\textup{M}_{2<}(P)\rightarrow \textup{M}_{3<}(P)$$
$$\left(\begin{array}{cc}
   a_{0,0}  &  a_{0,1} \\
    0 & a_{1,1}
\end{array}\right)\mapsto \left(\begin{array}{ccc}
   a_{0,0}  &  0 & a_{0,1}\\
    0 & 0 & 0\\
    0 & 0 & a_{1,1}
\end{array}\right).$$
\item If $f$ is surjective, but not injective, it does not preserve multiplication. For example:
$$f:2<\rightarrow 1$$
then
$$\Tilde{f}:\textup{M}_{2<}(P)\rightarrow \textup{M}_1(P)$$
$$\left(\begin{array}{cc}
   a_{0,0}  &  a_{0,1} \\
    0 & a_{1,1}
\end{array}\right)\mapsto a_{0,0}+a_{0,1}+a_{1,1}.$$
\end{enumerate}

\noindent Hence it is necessary to make some tweaks when defining a morphism between matrix rings in relation to a morphism between prosets.\\
  A second idea is reversing the direction of $f:\Lambda_1\rightarrow \Lambda_2$. We can then define $\mathbf{M}[f]$ on the set of generators (Proposition \ref{prop:dense subset rings}) of $\textup{M}_{\Lambda_2}(P)$ as follows:
$$\begin{array}{llll}
    \mathbf{M}[f]:&\textup{M}_{\Lambda_2}(P) &\longrightarrow & \textup{M}_{\Lambda_1}(P)\\
     & p^{\Lambda_2} & \longmapsto & p^{\Lambda_1}\\
     & e^{(s_1,s_2)}&\longmapsto & \left\{\begin{array}{ll}
                                          \sum_{t_1,t_2}e^{(t_1,t_2)}    &  \mbox{for }t_1\preceq t_2 \mbox{ and }f(t_1)=s_1, f(t_2)=s_2\\
                                       0      &  \mbox{otherwise}
                                    \end{array}\right.    
\end{array}$$

 In this case we have the following:

\begin{proposition}
Assume $\Lambda_1, \Lambda_2$ are prosets and $P$ is a ring. Let $f:\Lambda_1\rightarrow \Lambda_2$ be an order preserving morphism that is not convex. Then  $\mathbf{M}[f]$ is not well-defined.
\end{proposition}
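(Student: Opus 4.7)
The plan is to exhibit a concrete relation among generators of $\textup{M}_{\Lambda_2}(P)$ that the putative map $\mathbf{M}[f]$ fails to respect. First I would unpack the hypothesis that $f$ is not convex. The definition of a convex subset requires both closure under intervals and $\mathcal{N}_1$-path-connectedness inside the subset. For any order-preserving $f$, path-connectedness of a convex $\alpha \subset \Lambda_1$ transfers automatically to $f(\alpha)$: a path $s_1, t_1, \ldots, t_n, s_2$ inside $\alpha$ maps step-wise to a path inside $f(\alpha)$ because $f$ sends $\mathcal{N}_1$-adjacent pairs to $\mathcal{N}_1$-adjacent pairs. So non-convexity of $f$ must come from failure of closure under intervals: there must exist a convex $\alpha \subset \Lambda_1$, elements $a, b \in \alpha$ with $a \preceq b$, and some $c \in \Lambda_2$ with $f(a) \preceq c \preceq f(b)$ but $c \notin f(\alpha)$. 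Since $\alpha$ is closed under intervals, $[a, b] \subseteq \alpha$, hence also $c \notin f([a, b])$.

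With the triple $(a, b, c)$ in hand, I would exploit the identity $e^{(f(a), c)} \cdot e^{(c, f(b))} = e^{(f(a), f(b))}$, which holds in $\textup{M}_{\Lambda_2}(P)$ by a direct check of the matrix multiplication formula. If $\mathbf{M}[f]$ extended to a ring homomorphism it would have to send this identity to a true identity in $\textup{M}_{\Lambda_1}(P)$. Using the defining formula, the image of the right-hand side is $\sum_{t \preceq w,\; f(t)=f(a),\; f(w)=f(b)} e^{(t, w)}$, whose $(a, b)$-entry equals $1$ (contributed by the pair $t=a$, $w=b$). Expanding the product on the left side of the identity gives
$$\sum_{t \preceq u \preceq w,\; f(t)=f(a),\; f(u)=c,\; f(w)=f(b)} e^{(t, w)},$$
whose $(a, b)$-entry equals the number of $u \in [a, b]$ with $f(u) = c$, namely $0$ since $c \notin f([a, b])$. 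Comparing $(a, b)$-entries produces $0 \neq 1$ in the nontrivial ring $P$, contradicting the assumption that $\mathbf{M}[f]$ is well-defined.

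The main obstacle is the first paragraph: isolating closure under intervals as the genuine order-theoretic obstruction hidden inside the composite definition of convexity, since otherwise one is left comparing a vague non-convexity statement to a concrete multiplicative identity. Once that reduction is made, the computation above produces the contradiction from a single matrix entry. A minor subtlety is the proset case, where one may have $a \sim b$, but the identity $e^{(x,y)} \cdot e^{(y,z)} = e^{(x,z)}$ holds in any locally finite proset whenever $x \preceq y \preceq z$, so the argument runs through unchanged.
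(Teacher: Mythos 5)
Your proof is correct and follows essentially the same route as the paper's: reduce non-convexity of $f$ to a failure of closure under intervals (the connectedness half being automatic for order-preserving maps), then show that the relation $e^{(f(a),c)}e^{(c,f(b))}=e^{(f(a),f(b))}$ among generators is not preserved. Your version is in fact slightly more careful at one step: the paper asserts that the missing interval element $t$ satisfies $f^{-1}(t)=\emptyset$, which does not follow merely from $t\notin f(\alpha)$, whereas you only use $c\notin f([a,b])$ and compare $(a,b)$-entries, which repairs that small gap.
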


\begin{proof}
Let $f$ be an order preserving that is not convex. If $f$ is not convex then there is $\alpha\in \Lambda_1$ a convex set such that $f(\alpha)$ is not convex in $\Lambda_2$. Hence either $f(\alpha)$ is not closed under intervals or it is not connected.\\
  If $f(\alpha)$ is not connected, there are $s_1, s_2\in \alpha$ such that $[s_1, s_2]\neq \emptyset$, but $[f(s_1), f(s_2)]=\emptyset$. That is, $s_1\preceq s_2$ but $f(s_1)\npreceq f(s_2)$, a contradiction to the assumption $f$ is order preserving.\\
  If $f$ is not closed under intervals, there are $s_1\preceq s_2\in \Lambda_1$ such that $[f(s_1), f(s_2)]$ is not in $f(\alpha)$. Then there is $f(s_1)\precnapprox t\precnapprox f(s_2)$ such that $f^{-1}(t)=\emptyset$. In this case we then have
$$\mathbf{M}[f](e^{(f(s_1),t)})\mathbf{M}[f](e^{(t,f(s_2))})=0$$
but 
$$\mathbf{M}[f](e^{(f(s_1),f(s_2))})$$
contains $e^{(s_1,s_2)}$ as a term, hence is non-zero. That is, $$\mathbf{M}[f](e^{(f(s_1),t)})\mathbf{M}[f](e^{(t,f(s_2))})\neq \mathbf{M}[f](e^{(f(s_1),f(s_2))}).$$

\end{proof}

An example of the result above is as follows: let $f:\mathbf{2}<\longrightarrow \mathbf{3}<$ defined by $f(0)=0$, $f(1)=2$. This map is order preserving but not convex. The $\mathbf{M}[f]:{M}_{\mathbf{3}<}(P)\rightarrow {M}_{\mathbf{2}<}(P)$ would be defined as follows:
$$ \left(\begin{array}{lll}
a_{0,0} & a_{0,1} & a_{0,2} \\
0 & a_{1,1} & a_{1,2} \\
0 & 0 & a_{2,2} 
\end{array}\right) \mapsto \left(\begin{array}{ll}
a_{0,0} & a_{0,2} \\
0 & a_{2,2} 
\end{array}\right) $$
which is easily seen not to preserve multiplication. \\
  It is still not known if it the morphism being FCC is a necessary condition for $\mathbf{M}[f]$ to be well-defined, but there are some examples showing that being convex is not sufficient. For instance, let  $f:\mathbf{4<}\rightarrow\mathbf{2<}$ defined as $f(0)=f(1)=0$, $f(2)=f(3)=1$. The map $\mathbf{M}[f]:{M}_{\mathbf{2}<}(P)\rightarrow {M}_{\mathbf{4}<}(P)$ would be defined as follows:
$$ \left(\begin{array}{ll}
a_{0,0} & a_{0,1} \\
0 & a_{1,1} 
\end{array}\right) \mapsto \left(\begin{array}{llll}
a_{0,0} & 0 & a_{0,1} & a_{0,1} \\
0 & a_{0,0} & a_{0,1} & a_{0,1} \\
0 & 0 & a_{1,1} & 0\\
0 & 0 & 0 & a_{1,1}
\end{array}\right) $$
which is clearly not a ring homomorphism.
\begin{remark}\label{remark:fcc property}
Let $\Lambda^1$ be a proset and let $\sqcup_{i\in I}\Lambda_i$ the decomposition of $\Lambda^1$ into its components. Notice that if $f:\Lambda^1\rightarrow \Lambda^2$ is FCC then for every $s\in \Lambda^2$, $i\in I$ we have that either $f^{-1}(s)\cap \Lambda_i=\emptyset$, $f^{-1}(s)\cap \Lambda_i=\{t_i\}$ or $f^{-1}(s)\cap \Lambda_i=\Lambda_i$.
\end{remark}

\begin{definition}
We define the category category of locally finite prosets $\textbf{\textup{prosets}}_{\textup{fin}}$ as:
\begin{itemize}
    \item Objects: all locally finite prosets,
    \item Arrows: all the FCC morphisms between locally finite prosets.
\end{itemize}
\end{definition}

Notice that the category $\textbf{\textup{prosets}}_{\textup{fin}}$ has $0=\emptyset$ as its initial object and $1$ as its terminal object.\\
  Our goal now is to show that if $f$ is FCC then $\textbf{M}[f]$ can be extended to a ring homomorphism. Not only that, we can define a contravariant functor from the category $\textbf{prosets}_\text{fin}$ to $\textbf{TopRings}$.\\
  \indent Some remarks prior to the theorem:
  \begin{remark}\label{remark:relations among generators}\label{remark:order doesnt matter matrices}
Given $\Lambda$ a proset and $P$ a topological ring note that:
\begin{itemize}
    \item For every $s_1\leqslant s_2\in \Lambda$, $t_1\leqslant t_2\in \Lambda$
        $$e^{(s_1,s_2)}e^{(t_1,t_2)}=\left\{\begin{array}{ll}
                                        e^{(s_1,t_2)} &  \mbox{if }s_2=t_1 \\
                                        0 & \mbox{otherwise.}
                                    \end{array}\right.$$
    \item For every $s\in \Lambda$, 
    $$e^{(s,s)}=1^{\{s\}}.$$
    \item For every $S_1, S_2$ subsets of $\Lambda$                                  $$1^{S_1}1^{S_2}=\left\{\begin{array}{ll}
                                        1^{S_1\cap S_2} &  \mbox{if }S_1\cap S_2\neq\emptyset \\
                                        0 & \mbox{otherwise.}
                                    \end{array}\right.$$
                                    .
    \item For every $S\subset \Lambda$, $s_1\leqslant s_2 \in\Lambda$
        $$1^{S}e^{(s_1,s_2)}=\left\{\begin{array}{ll}
                                        e^{(s_1,s_2)} &  \mbox{if }s_1\in S \\
                                        0 & \mbox{otherwise.}
                                    \end{array}\right.$$
        $$e^{(s_1,s_2)}1^{S}=\left\{\begin{array}{ll}
                                        e^{(s_1,s_2)} &  \mbox{if }s_2\in S \\
                                        0 & \mbox{otherwise.}
                                    \end{array}\right.$$
    \item For every $S\subset \Lambda$, $p\in P$
    $$p^{\Lambda}1^{S}=1^{S}p^{\Lambda}.$$ 
    \item For every $s_1\leqslant s_2\in \Lambda$, $p\in P$
    $$p^{\Lambda}e^{(s_1,s_2)}=e^{(s_1,s_2)}p^{\Lambda}.$$ 
    \item Given $s,s_1,s_2\in \Lambda$ and $A\in \textup{M}_\Lambda(P)$, then
 $$(1^{\{s\}}A)_{t_1,t_2}=\left\{\begin{array}{ll}
     A_{t_1,t_2}, &  \mbox{if } t_1=s  \\
     0, &  \mbox{otherwise.}
 \end{array}\right. $$
 $$(A1^{\{s\}})_{t_1,t_2}=\left\{\begin{array}{ll}
     A_{t_1,t_2}, &  \mbox{if }t_2=s \\
     0, &  \mbox{otherwise.}
 \end{array}\right.$$
$$(1^{\{s_1\}}A1^{\{s_2\}})_{t_1,t_2}=\left\{\begin{array}{ll}
     A_{t_1,t_2}, &  \mbox{if } t_1=s_1 \text{ and } t_2=s_2 \\
     0, &  \mbox{otherwise.}
 \end{array}\right. $$
 $$(A1^{\{s\}}A)_{t_1,t_2}=\left\{\begin{array}{ll}
     A_{t_1,s}A_{s,t_2}, &  \mbox{if }s\in [t_1, t_2] \\
     0, &  \mbox{otherwise.}
 \end{array}\right.$$
\end{itemize}
These all can be proven by expanding the sums in each coordinate $(t_1, t_2)$.
\end{remark}

\begin{theorem}\label{thrm:functor construction} \index{$\textbf{\textup{M}}_{\_}(P)$|ndx}\index{contravariant functor|ndx}
Let $P$ be a topological ring. The map 
$$\begin{array}{cccc}
 \textbf{\textup{M}}_{\_}(P):    & \textbf{\textup{prosets}}_{\textup{fin}}&\Longrightarrow & \textbf{\textup{TopRings}} \\
& \Lambda &\longmapsto     & \textup{M}_{\Lambda}(P)\\
& f& \longmapsto &\textbf{\textup{M}}[f]
\end{array}$$
is a contravariant functor.
\end{theorem}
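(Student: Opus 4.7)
The plan is to define $\mathbf{M}[f]$ coordinate-wise using the component decomposition of $\Lambda_1$, verify it is a well-defined ring homomorphism by checking it respects the multiplicative relations on the dense generating set of Proposition \ref{prop:dense subset rings}, and then address continuity, preservation of identities, and contravariance in turn.

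First I would decompose $\Lambda_1 = \bigsqcup_{i \in I} \Lambda_i$ into its components via Proposition \ref{prop:making prosets into disjoint parts}, so that Proposition \ref{prop:disjoint union and product rings} gives $\textup{M}_{\Lambda_1}(P) \cong \prod_{i \in I} \textup{M}_{\Lambda_i}(P)$. The FCC property says that on each $\Lambda_i$, $f|_{\Lambda_i}$ is either a convex embedding or a constant map. Accordingly I define $\mathbf{M}[f]$ componentwise: on a convex-embedding component $\Lambda_i$ set $\mathbf{M}_i[f](A)_{t_1, t_2} := A_{f(t_1), f(t_2)}$ for $t_1 \preceq t_2$ (and $0$ otherwise), and on a constant component with value $t_i \in \Lambda_2$ set $\mathbf{M}_i[f](A) := A_{t_i, t_i} \cdot 1^{\Lambda_i}$. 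Assembled via the product isomorphism, this agrees on the generators of Proposition \ref{prop:dense subset rings} with the formula sketched in the excerpt.

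The key step is the homomorphism property. In the convex-embedding case, the decisive observation is that $f$ restricts to a bijection $[t_1, t_2]_{\Lambda_i} \xrightarrow{\sim} [f(t_1), f(t_2)]_{\Lambda_2}$: injectivity of $f$ gives injectivity, and surjectivity follows because $f([t_1, t_2])$ is convex in $\Lambda_2$ by the convex-function hypothesis, hence closed under intervals, so contains $[f(t_1), f(t_2)]$. This bijection reconciles the two expressions for matrix multiplication entrywise. In the constant case, $A_{t_i, t_i} \cdot 1^{\Lambda_i}$ sits in the commutative subring of scalar multiples of $1^{\Lambda_i}$, and one checks that multiplicativity reduces to a small identity involving the entries of $A$ and $B$ at indices $(t_i, t_i)$, which is precisely what Remark \ref{remark:fcc property} together with the relations of Remark \ref{remark:relations among generators} allows. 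For continuity, Corollary \ref{coro:classifying basis} provides a basis of neighbourhoods of $0$ in $\textup{M}_{\Lambda_1}(P)$ of the form $I^{\Lambda_1}_\alpha + \textup{M}_{\Lambda_1}(S_\sigma)$ with $\alpha \in \Gamma(\Lambda_1)$; the image $f(\alpha)$ is finite, so Proposition \ref{prop:finite subsets into finite convex} yields a $\beta \in \Gamma(\Lambda_2)$ containing it, and the analogous basic open neighbourhood is sent inside, establishing continuity.

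The last piece is functoriality. The identity $\mathbf{M}[\operatorname{id}_\Lambda] = \operatorname{id}$ is immediate from the entry formula. For composable FCC morphisms $f: \Lambda_1 \to \Lambda_2$ and $g: \Lambda_2 \to \Lambda_3$, the equality $\mathbf{M}[g \circ f] = \mathbf{M}[f] \circ \mathbf{M}[g]$ reduces to a case analysis on component types: convex embeddings compose to convex embeddings, and any composition involving a constant factor on some component remains constant there. The main obstacle I anticipate is precisely this constant-component case when $\Lambda_2$ is a proper proset rather than a poset, because scalar extraction $A \mapsto A_{t_i, t_i}$ has to interact correctly with nontrivial equivalence classes $[t_i, t_i]_{\Lambda_2}$; handling it cleanly requires tracking how FCC constrains $f^{-1}$ on each component (Remark \ref{remark:fcc property}), ensuring that what looks like a multi-term sum in the definition on generators collapses to the correct single idempotent.
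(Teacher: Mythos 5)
Your proposal proves the theorem by a genuinely different, and in places cleaner, route than the paper. The paper never writes $\mathbf{M}[f]$ coordinatewise: it defines the map only on the generators $p^{\Lambda_2}$, $e^{(s_1,s_2)}$ of Proposition \ref{prop:dense subset rings}, checks multiplicativity on pairs of generators (Claims 1 and 2, via Remark \ref{remark:fcc property}), and then extends along convergent nets using Corollary \ref{coro:nets in these rings}. Your global formula $\mathbf{M}[f](A)_{t_1,t_2}=A_{f(t_1),f(t_2)}$ on convex-embedding components, combined with the observation that a convex embedding restricts to a bijection $[t_1,t_2]\rightarrow[f(t_1),f(t_2)]$, yields multiplicativity for arbitrary elements of $\textup{M}_{\Lambda_2}(P)$ in one line; that interval bijection is the actual content hidden inside the paper's Claims 1 and 2, and you have isolated it. It also turns continuity into a direct computation with the basic neighbourhoods of Corollary \ref{coro:classifying basis} instead of an extension-from-a-dense-subring argument, which is more robust. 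Note too that your choice $A\mapsto A_{t_i,t_i}\cdot 1^{\Lambda_i}$ on constant components is the correct reading: the displayed generator formula in the paper (summing over all $t_1\preceq t_2$ with $f(t_1)=s_1$, $f(t_2)=s_2$) would send $e^{(t_i,t_i)}$ to the all-entries matrix of a constant component, which is not even idempotent; the diagonal idempotent $1^{f^{-1}(t_i)}$, as used in the paper's coequalizer lemma, is what is required.

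The one genuine gap is exactly the obstacle you flag in your last sentence, and it cannot be closed by bookkeeping with Remarks \ref{remark:fcc property} and \ref{remark:relations among generators}: if $f|_{\Lambda_i}$ is constant with value $t_i$ and $[t_i,t_i]\neq\{t_i\}$ in $\Lambda_2$, then $(AB)_{t_i,t_i}=\sum_{t\in[t_i,t_i]}A_{t_i,t}B_{t,t_i}\neq A_{t_i,t_i}B_{t_i,t_i}$ in general, so $A\mapsto A_{t_i,t_i}\cdot 1^{\Lambda_i}$ is not multiplicative. Concretely, the constant map $f:\mathbf{1}\rightarrow\mathbf{2}$ is FCC, and a contravariant functor would have to produce a unital ring homomorphism $\textup{M}_{\mathbf{2}}(k)=\textup{M}_2(k)\rightarrow k$; for $k$ a field none exists, since $\textup{M}_2(k)$ is simple and four-dimensional. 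So no alternative definition rescues this case: one must either exclude such morphisms from $\textbf{\textup{prosets}}_{\textup{fin}}$ (e.g.\ require constant components to land on points with trivial equivalence class, which is automatic when $\Lambda_2$ is a poset) or weaken the statement. To be clear, this is not a defect of your argument relative to the paper's: the paper's Claim 2, applied with $s_1\precnapprox s_2=t_1\precnapprox t_2$ and $s_1=t_2\sim s_2$, runs into the identical configuration and its displayed expansion of $\mathbf{M}[f](e^{(s_1,t_2)})$ is then inconsistent with the definition of $\mathbf{M}[f]$ on diagonal generators. You should state the restriction explicitly rather than asserting that the case can be handled.
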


\begin{proof}
Given $P$ a topological ring, the functor $\mathbf{M}_{\_}(P)$ maps the object $\Lambda$ from $posets$ to $\mathbf{M}_{\_}(P)(\Lambda):=\textup{M}_{\Lambda}(P)$, where $\textup{M}_\emptyset(P):=\{0\}$ is defined to be the trivial ring. We now need to show the functor map FCC maps to continuous ring homomorphims. To simplify the notation we will denote $\mathbf{M}_{\_}(P)(f):=\mathbf{M}[f]$, for $f$ an FCC map. The proof will follow by defining a map from the generators of the domain and showing it extends to a continuous ring homomorphism. Remark \ref{remark:relations among generators} and Proposition \ref{prop:dense subset rings} will be central for this part of the proof. It will then remain to show that composition of functions is mapped to composition of functions.\\
  If $\Lambda_1$ is the empty poset and $f:\emptyset\rightarrow \Lambda_2$ is the unique morphism from the empty set to $\Lambda_2$, define $\mathbf{M}[f]$ as the unique morphism from $\textup{M}_{\Lambda_2}(P)$ to $\textup{M}_\emptyset(P)$.\\
  For $\Lambda_2$ a non-empty set, we extend $\mathbf{M}[f]$ for some finite sums as follows:
\begin{itemize}
    \item If $A,B\in \{p^{\Lambda}\}_{p\in P}\cup \{e^{(s_1,s_2)}\}_{s_1  \preceq s_2\in\Lambda_2}$, the generating set, we define $$\mathbf{M}[f](A+B):=\mathbf{M}[f](A)+\mathbf{M}[f](B).$$
    \item If $A\in \{e^{(s_1,s_2)}\}_{s_1  \preceq s_2\in\Lambda_2}$ and $p\in P$, we define 
    $$p^{\Lambda_1}\mathbf{M}[f](A)=\mathbf{M}[f](p^{\Lambda_2}A)=\mathbf{M}[f](Ap^{\Lambda_2})=\mathbf{M}[f](A)p^{\Lambda_1}.$$
\end{itemize}

We want now to show that such a map can be extended to a continuous ring homomorphism. First, we will show that such a map can be extended to a ring homomorphism on the dense subset from Proposition \ref{prop:dense subset rings} and then extend it continuously to the whole ring. For that let $\Lambda_1=\sqcup_{i\in I}\Lambda_i$ the decomposition of $\Lambda_1$ into its irreducible components.

  \textit{Claim 1}: Let $t_1\precnapprox t_2\in \Lambda_1$ be such that $f(t_1)=s_1$ and $f(t_2)=s_2$ for some $s_1\precnapprox s_2\in\Lambda_2$. If $s_1\precnapprox s_2\precnapprox s_3$, for some $s_3\in \Lambda_2$ then one of the following happen:
\begin{itemize}
    \item $e^{(t_1,t_2)}\mathbf{M}[f](e^{(s_2,s_3)})=0$, or
    \item $e^{(t_1,t_2)}\mathbf{M}[f](e^{(s_2,s_3)})=e^{(t_1,t_3)}$ for a unique $t_3\in\Lambda_1$ such that $t_1\precnapprox t_2\precnapprox t_3$ and $f(t_3)=s_3$.
\end{itemize}
Assume $e^{(t_1,t_2)}\mathbf{M}[f](e^{(s_2,s_3)})\neq 0$. Then expanding it, we have the right-hand side of
$$e^{(t_1,t_2)}\mathbf{M}[f](e^{(s_2,s_3)})=e^{(t_1,t_2)}\left(\sum e^{(t_2',t_3')} \right)$$
has at least one pair $(t_2', t_3')$ such that $f(t_2')=s_2$, $t_2'=t_2$, and $f(t_3')=s_3$ (Remark \ref{remark:relations among generators}). Hence, restricted to the component containing $t_1, t_2$ the function $f$ is a convex embedding. By Remark \ref{remark:fcc property}, such the pair $(t_2', t_3')$ is unique, as it has to be on the same component as $t_1, t_2$. That is, if $e^{(t_1,t_2)}\mathbf{M}[f](e^{(s_2,s_3)})\neq 0$ then $e^{(t_1,t_2)}\mathbf{M}[f](e^{(s_2,s_3)})=e^{(t_1,t_3')}$.\\
  \textit{Claim 2}: Let $s_1\precnapprox s_2$ be elements in $\Lambda_2$. Then $\mathbf{M}[f](e^{(s_1,s_2)})\mathbf{M}[f](e^{(t_1,t_2)})=\mathbf{M}[f](e^{(s_1,s_2)}e^{(t_1,t_2)})$ for every $t_1, t_2\in \Lambda_2$.\\
  Assume $s_1, s_2, t_1, t_2\in\Lambda_2$ are such that $s_1\precnapprox s_2$, $s_2=t_1$ and $t_1\precnapprox t_2$, as otherwise the result would be trivially $0$ on both sides. Then $e^{(s_1,s_2)}e^{(t_1,t_2)}=e^{(s_1,t_2)}$ and, as $f$ is FCC, by Remark \ref{remark:fcc property} the right-hand side is
\begin{equation}\label{equation:left_hand}
\mathbf{M}[f](e^{(s_1,t_2)})=\sum_{j\in J}e^{(q_{j_1},q_{j_2})}    
\end{equation}
for $J$ a subset of $I$, $q_{j_1}\precnapprox q_{j_2}\in\Lambda_j$ and $f^{-1}(s_1)\cap \Lambda_j=\{q_{j_1}\}$, $f^{-1}(t_2)\cap \Lambda_j=\{q_{j_2}\}$. For the left-hand side, 
\begin{equation}\label{equation:right_hand}
\mathbf{M}[f](e^{(s_1,s_2)})\mathbf{M}[f](e^{(t_1,t_2)})=\left(\sum_{k\in K}e^{(s_{k_1},s_{k_2})}\right)\left(\sum_{l\in L}e^{(t_{l_1},t_{l_2})}\right)    
\end{equation}
where $K$ and $L$ are subsets of $I$, for $k\in K$ we have $s_{k_1}\precnapprox s_{k_2}\in\Lambda_k$, $f^{-1}(s_1)\cap \Lambda_k=\{s_{k_1}\}$, $f^{-1}(s_2)\cap \Lambda_k=\{s_{k_2}\}$ and for $l\in L$ we have $t_{l_1}\precnapprox t_{l_2}\in\Lambda_l$, $f^{-1}(t_1)\cap \Lambda_l=\{t_{l_1}\}$, $f^{-1}(t_2)\cap \Lambda_l=\{t_{l_2}\}$. As we assumed $s_1\precnapprox s_2=t_1\precnapprox t_2$ we have that
$$\mathbf{M}[f](e^{(s_1,s_2)})\mathbf{M}[f](e^{(t_1,t_2)})=\left(\sum_{p\in K\cap L}e^{(s_{p_1},t_{p_2})}\right).$$
We need to show $K\cap L=J$, and the result will follow from Remark \ref{remark:fcc property}. If $j\in J$ then there are $q_{j_1}\precnapprox q_{j_2}$ such that $f(q_{j_1})=s_1$, $f(q_{j_2})=t_2$, hence $f|_{\Lambda_j}$ is convex embedding. Then there is $q\in[q_{j_1},q_{j_2}]$ such that $f(q)=s_2=t_1$, that is, $e^{(q_{j_1},q)}$ and $e^{(q,q_{j_2})}$ show up as terms in the right hand side of \ref{equation:right_hand}. Hence $j\in K\cap L$.\\
  On the other hand, if $p\in K\cap L$, then there are $s_{p_1}\precnapprox t\precnapprox t_{p_2}\in \Lambda_p$ such that $f(s_{p_1})=s_1$, $f(t)=s_2=t_1$, $f(t_{p_2})=t_2$. Hence the term $e^{(s_1,t_2)}$ appears on the right-hand side of Equation \ref{equation:left_hand}. That is, $p\in J$.\\
  \textit{Claim 3}: $\mathbf{M}[f]$ is well-defined and continuous.\\
  Claim $1.$ and $2.$ implies it is well-defined for all elements that are finite sums and finite products of the generators. Hence it is well-defined for a dense subset of $\textup{M}_{\Lambda_2}(P)$ (Corollary \ref{prop:dense subset rings}). Given $\{A_\alpha\}_{\alpha\in \Gamma(\Lambda_2)}\subset \langle \{p^{\Lambda_2}\}_{p\in S}, \{e^{(s_1,s_2)}\}_{s_1  \preceq s_2\in\Lambda_2} \rangle $ a convergent net, Corollary \ref{coro:nets in these rings} then implies
$$\mathbf{M}[f]\left(\lim_{\alpha\in \Gamma(\Lambda_2)}(A_\alpha)\right)=\lim_{\alpha\in \Gamma(\Lambda_2)}\mathbf{M}[f](A_\alpha).$$
Hence the morphism can be extended to a continuous ring homomorphism.\\
  \textit{Claim 4}: Given $f_1:\Lambda_1\rightarrow \Lambda_2$ and $f_2:\Lambda_2\rightarrow\Lambda_3$ then $\mathbf{M}[f_2\circ f_1]=\mathbf{M}[f_1]\circ\mathbf{M}[f_2]$.\\
  The result is trivial when $\Lambda_1=\emptyset$. Assume $\Lambda_1\neq \emptyset$. The proof will be given by looking at where the generators of $\textup{M}_{\Lambda_3}$ are mapped.\\
  It is easy to see that for $p\in P$
$$\textbf{M}[f_2\circ f_1](p^{(\Lambda_3})=p^{\Lambda_1}=\textbf{M}[f_1]\circ \textbf{M}[f_2](p^{\Lambda_3}).$$
  For the $e^{(s_1,s_2)}$ case, notice that if $\mathbf{M}[f_2\circ f_1](e^{(s_1,s_2)})\neq 0$ then:\\
\begin{equation}\label{eq:3}
    \begin{array}{lll}
    \mathbf{M}[f_2\circ f_1](e^{(s_1,s_2)})& = \sum_{t_1,t_2}e^{(t_1,t_2)}    &  \mbox{for }t_1\preceq t_2\in\Lambda_1, \mbox{and }f_2\circ f_1(t_1)=s_1,\\
    &  &  f_2\circ f_1(t_2)=s_2
\end{array}
\end{equation}
and 
\begin{equation}\label{eq:4}
\begin{array}{lll}
  \mathbf{M}[f_1]\circ\mathbf{M}[f_2](e^{(s_1,s_2)})&  =\mathbf{M}[f_1](\sum_{q_1,q_2}e^{(q_1,q_2)})    &  \mbox{where }q_1\preceq q_2\in\Lambda_2 \mbox{ and }f_2(q_1)=s_1, \\
    & &    f_2(q_2)=s_2\\
  &   = \sum_{q_1,q_2}(\sum_{t_1,t_2}e^{(t_1,t_2)})&\mbox{for }t_1\preceq t_2 \in \Lambda_1, \mbox{ and }f_2(q_1)=s_1, \\
  & &   f_2(q_2)=s_2, f_1(t_1)=q_1, f_1(t_2)=q_2.
\end{array}    
\end{equation}
Claim 1 implies that the right-hand side of Equation (\ref{eq:4}) has to be the same as the right-hand side of Equation (\ref{eq:3}). Then $\mathbf{M}[f_1]\circ\mathbf{M}[f_2](e^{(s_1,s_2)})=\mathbf{M}[f_2\circ f_1](e^{(s_1,s_2)})$.\\
  It then follows that $\mathbf{M}_{\_}(P)$ is a contravariant functor.

\end{proof}

We call the functor $\mathbf{M}_{\_}(P)$ the \textbf{incidence functor}\index{incidence funtor|ndx}.

\begin{remark}\label{remark:surj mapped to inj}
Notice that given $f:\Lambda^1\rightarrow \Lambda^2$ an FCC map and two elements $e^{(s_1,s_2)}, e^{(t_1,t_2)}$ on the generator set of $M_{\Lambda^2}(P)$, if they are distinct and have no trivial image then the image under $\mathbf{M}[f]$ of both elements are disjoint. Hence if $f$ is a surjective FCC map we have that $\mathbf{M}[f]$ is injective.
\end{remark}


\subsection{Sending colimits to limits} 

In this section we will work with properties of the category $posets$ that later can be used to show our contravariant functor maps colimits to limits.

\begin{lemma}\label{lemma:existence coproducts posets}\index{$\textbf{\textup{prosets}}_{\textup{fin}}$|ndx}
Given $\{\Lambda_i\}_{i\in I}$ be an arbitrary collection of objects in $\textbf{\textup{prosets}}_{\textup{fin}}$, their coproduct exist and it is $\sqcup_{i\in I}\Lambda_i$, as defined in Definition \ref{defi:disjoint union prosets}.
\end{lemma}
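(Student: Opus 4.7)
The plan is to verify the universal property of the coproduct directly. First, I would check that $\sqcup_{i\in I}\Lambda_i$ is locally finite, so that it actually is an object of $\textbf{prosets}_{\textup{fin}}$: if $s_1\preceq_\sqcup s_2$ then by definition both lie in some single $\Lambda_i$ with $s_1\preceq_i s_2$, and the interval $[s_1,s_2]_\sqcup$ coincides with $[s_1,s_2]_{\Lambda_i}$, which is finite by hypothesis.

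Next I would introduce the canonical inclusions $\iota_i\colon \Lambda_i\to \sqcup_{i\in I}\Lambda_i$ and verify each is an FCC morphism. The key observation underlying the whole argument is that the components of $\sqcup_{i\in I}\Lambda_i$ (in the sense of Proposition \ref{prop:making prosets into disjoint parts}) are exactly the components of the individual $\Lambda_i$: by Definition \ref{defi:disjoint union prosets} elements in distinct summands are independent, so $\mathcal{N}_\omega(s)$ computed in $\sqcup_{i\in I}\Lambda_i$ equals $\mathcal{N}_\omega(s)$ computed in whichever $\Lambda_i$ contains $s$. Consequently, restricting $\iota_i$ to any component of $\Lambda_i$ is the identity onto a component of $\sqcup_{i\in I}\Lambda_i$, which is trivially an injective convex map, hence a convex embedding. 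So $\iota_i$ is FCC.

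For the universal property, given any cocone $(\Lambda',\{f_i\colon \Lambda_i\to \Lambda'\}_{i\in I})$ with each $f_i$ FCC, define $f\colon \sqcup_{i\in I}\Lambda_i\to \Lambda'$ by $f(s):=f_i(s)$ when $s\in \Lambda_i$. This is well-defined because the union is disjoint. Uniqueness is immediate: any arrow $g$ satisfying $g\circ \iota_i=f_i$ must agree with $f_i$ on $\Lambda_i$, hence with $f$ on all of $\sqcup_{i\in I}\Lambda_i$. Order preservation is clear since a relation in $\sqcup_{i\in I}\Lambda_i$ holds only within a single summand. To verify that $f$ is FCC, I would again invoke the identification of the components of $\sqcup_{i\in I}\Lambda_i$ with the components of the $\Lambda_i$: if $C$ is a component of $\sqcup_{i\in I}\Lambda_i$, then $C$ is a component of a unique $\Lambda_i$, and $f|_C=f_i|_C$ is either constant or a convex embedding because $f_i$ is FCC.

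No real obstacle arises beyond correctly matching definitions; the argument is essentially a bookkeeping exercise showing that the FCC condition is compatible with the component decomposition, and this compatibility is exactly what makes the external disjoint union serve as the categorical coproduct in $\textbf{prosets}_{\textup{fin}}$.
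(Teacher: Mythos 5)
Your proof is correct and follows essentially the same route as the paper: verify the universal property directly using the canonical inclusions and the map defined piecewise by $f(s)=f_i(s)$ for $s\in\Lambda_i$. The only difference is that you spell out the local finiteness of the disjoint union and the compatibility of the FCC condition with the component decomposition, details the paper's proof leaves as ``clear.''
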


\begin{proof}
Let $\{\Lambda_i\}_{i\in I}$  as in the statement and define $\Lambda:=\sqcup_{i\in I}\Lambda_i$. For each $i\in I$ let $i_i:\Lambda_i\rightarrow\Lambda$ be the embeddings sending $\Lambda_i$ to its copy in $\Lambda$.\\
  Claim: $\Lambda$ is the coproduct of  $\{\Lambda_i\}_{i\in I}$.\\
  Let $\Lambda'$ be another locally finite proset and, for each $i\in I$, let $f_i:\Lambda_i\rightarrow \Lambda'$ a FCC map. Define $(f_i)_{i\in I}:\Lambda\rightarrow \Lambda'$ by $(f_i)_{i\in I}(s_i)=f_i(s_i)$ if $s_i\in \Lambda_i$. It is clear that $(f_i)_{i\in I}\circ i_i=f_i$ and $(f_i)_{i\in I}$ is the unique FCC morphism satisfying this property. Hence $\Lambda$ is the coproduct of the collection $\{\Lambda_i\}_{i\in I}$.

$$
\begin{tikzcd}
\Lambda \arrow[rrdd, "(f_i)_{i\in I}" description, dashed]                         &  &          \\
                                                                      &  &          \\
\Lambda_i \arrow[uu, "i_i" description] \arrow[rr, "f_i" description] &  & \Lambda'
\end{tikzcd}$$
\end{proof}

The following follows directly from Lemma \ref{lemma:existence coproducts posets} and Proposition \ref{prop:disjoint union and product rings}.

\begin{corollary}\label{coro:functor maps coproducts to products}
The incidence functor $\mathbf{M}_\_(P)$ maps coproducts to products.
\end{corollary}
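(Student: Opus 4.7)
The plan is to combine Lemma \ref{lemma:existence coproducts posets} (which identifies the coproduct in $\textbf{prosets}_\textup{fin}$ with the disjoint union $\sqcup_{i\in I}\Lambda_i$) with Proposition \ref{prop:disjoint union and product rings} (which provides a topological ring isomorphism $\varphi:\textup{M}_{\sqcup_{i\in I}\Lambda_i}(P)\to \prod_{i\in I}\textup{M}_{\Lambda_i}(P)$). The only real content, beyond invoking these two results, is to check that the cone of morphisms $\mathbf{M}[\iota_i]$ produced by the functor coincides, under $\varphi$, with the canonical projection cone of the product.

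First, I would fix the coproduct injections $\iota_i:\Lambda_i\hookrightarrow \sqcup_{j\in I}\Lambda_j$. Each $\iota_i$ is a convex embedding, hence FCC, and is therefore a morphism in $\textbf{prosets}_\textup{fin}$. Applying $\mathbf{M}_\_(P)$ yields continuous ring homomorphisms $\mathbf{M}[\iota_i]:\textup{M}_{\sqcup_{j\in I}\Lambda_j}(P)\to \textup{M}_{\Lambda_i}(P)$. I would then compute $\mathbf{M}[\iota_i]$ on the dense generating set of Proposition \ref{prop:dense subset rings}: by construction $\mathbf{M}[\iota_i](p^{\sqcup_j\Lambda_j})=p^{\Lambda_i}$, and for $s_1\preceq s_2$ in the disjoint union,
$$
\mathbf{M}[\iota_i]\bigl(e^{(s_1,s_2)}\bigr)=\begin{cases} e^{(s_1,s_2)} & \text{if } s_1,s_2\in \Lambda_i,\\ 0 & \text{otherwise,}\end{cases}
$$
using that $\iota_i$ is injective and that distinct components of the disjoint union share no comparable pairs.

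Next, the isomorphism $\varphi$ from Proposition \ref{prop:disjoint union and product rings} sends a matrix $A\in \textup{M}_{\sqcup_j\Lambda_j}(P)$ to the tuple of its restrictions to the blocks $\Lambda_j\times \Lambda_j$ (all other blocks being zero by independence of distinct components). In particular the $i$-th projection $\pi_i\circ\varphi$ agrees with $\mathbf{M}[\iota_i]$ on the generators computed above, and hence (by continuity and density, via Proposition \ref{prop:dense subset rings} and Corollary \ref{coro:nets in these rings}) on all of $\textup{M}_{\sqcup_j\Lambda_j}(P)$. Thus $\pi_i\circ\varphi=\mathbf{M}[\iota_i]$ for every $i\in I$.

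Finally, since $\varphi$ is an isomorphism of topological rings and $(\prod_i\textup{M}_{\Lambda_i}(P),\{\pi_i\})$ is by definition the product in $\textbf{TopRings}$, the cone $\bigl(\textup{M}_{\sqcup_j\Lambda_j}(P),\{\mathbf{M}[\iota_i]\}_{i\in I}\bigr)$ is also a product cone in $\textbf{TopRings}$. This is exactly the statement that the contravariant functor $\mathbf{M}_\_(P)$ takes the coproduct $\sqcup_{i\in I}\Lambda_i$ to the product $\prod_{i\in I}\textup{M}_{\Lambda_i}(P)$. The main (and essentially only) technical obstacle is the explicit identification of $\mathbf{M}[\iota_i]$ on generators; once that block-extraction description is in hand the rest is formal.
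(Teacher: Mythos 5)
Your proposal is correct and follows the same route as the paper, which derives the corollary directly from Lemma \ref{lemma:existence coproducts posets} and Proposition \ref{prop:disjoint union and product rings}; you have simply made explicit the verification (computing $\mathbf{M}[\iota_i]$ on the dense generating set and matching the cone with the product projections under the isomorphism) that the paper leaves implicit.
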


\begin{definition}[Quotient order]
Let $\Lambda=\sqcup_{i\in I}\Lambda_i$ be a poset with order given by the disjoint union. We will say $\equiv\subset \Lambda\times\Lambda$ is a \textbf{FCC equivalence relation} if there is a FCC surjective map $f:\Lambda\rightarrow \Lambda'$ such that given $s_1,s_2\in \Lambda$ then $s_1\equiv s_2$ if, and only if, $f(s_1)=f(s_2)$. We write $\Lambda':=\Lambda/\equiv$ and call this set the \textbf{FCC quotient} under $\equiv$.
\end{definition}

For the next result we will prove the existence of pushouts in the category $\textbf{\textup{prosets}}_{\textup{fin}}$. We will first give a remark on how such construction works in the case the proset $\Lambda$ is irreducible, and the proof will then be given for the general case.

\begin{remark}
Let $\Lambda$, $\Lambda_1$ and $\Lambda_2$ be prosets and $f:\Lambda\rightarrow \Lambda_1$, $g:\Lambda\rightarrow \Lambda_2$ be FCC maps. We want to build the pushout diagram:
$$
\begin{tikzcd}
\Lambda \arrow[d, "g" description] \arrow[r, "f" description] & \Lambda_1 \arrow[d, "p_1" description] \\
\Lambda_2 \arrow[r, "p_2" description]                        & {\Lambda_{f,g}}                       
\end{tikzcd}$$
For the case $\Lambda=\emptyset$, as the empty set is the initial object of $\textbf{\textup{prosets}}_{\textup{fin}}$,  $\Lambda_{f,g}=\Lambda_1\sqcup \Lambda_2$ and $p_1$, $p_2$ are the embeddings of $\Lambda_1, \Lambda_2$ into $\Lambda_1\sqcup \Lambda_2$.\\
  Assume $\Lambda$ is irreducible and non-empty. As $\Lambda$ is irreducible, there exists an unique $j_0\in J$ and $k_0\in K$ such that $f(\Lambda)\subset \Lambda_j$, $g(\Lambda)\subset \Lambda_k$. The desired proset $\Lambda_{f,g}$ will be as follows: 
$$\Lambda_{f,g}=\left(\bigsqcup_{j\in J\backslash\{j_0\}}\Lambda_j \right)\sqcup\left(\bigsqcup_{k\in K\backslash\{k_0\}}\Lambda_k \right)\sqcup \Lambda_{\{j_0,k_0\}}.$$
The proset $\Lambda_{\{j_0,k_0\}}$ is the FCC quootient $\Lambda_{j_0}\sqcup \Lambda_{k_0}$ with equivalence relation given by:
\begin{itemize}
    \item for every $t\in \Lambda$, $f(t)\equiv g(t)$,
    \item if $\Lambda\neq 1$, $f$ is a convex embedding and $g$ is a constant function then for every $s_1, s_2\in \Lambda_{j_0}$ we have $s_1\equiv s_2$,
    \item if $\Lambda\neq 1$, $g$ is a convex embedding and $f$ is a constant function then for every $s_1, s_2\in \Lambda_{k_0}$ we have $s_1\equiv s_2$.
\end{itemize}
This is the maximal FCC quotient of $\Lambda_{j_0}\sqcup \Lambda_{k_0}$ such that $\pi\circ i_{j_0}\circ f= \pi \circ i_{k_0}\circ g$. Here the maps
$\pi:\Lambda_{j_0}\sqcup \Lambda_{k_0}\rightarrow \Lambda_{\{j_0,k_0\}}$ is given by the FCC quotient under $\equiv$, and $i_{j_0}$, $i_{k_0}$ are the embeddings from $\Lambda_{j_0}$, $\Lambda_{k_0}$ into their respective components in the disjoint union $\Lambda_{j_0}\sqcup\Lambda_{k_0}$. It is then the case 
\begin{itemize}
    \item $p_1|_{\Lambda_{j_0}}=\pi\circ i_{j_0}$,
    \item for $j\neq j_0$, $p_1|_{\Lambda_j}=i_j$,
    \item $p_2|_{\Lambda_{k_0}}=\pi \circ i_{k_0}$,
    \item and for $k\neq k_0$, $p_2|_{\Lambda_k}=i_k$.
\end{itemize}
As $p_1$, $p_2$ is defined on all the components of $\Lambda_{\{f,g\}}$, it is uniquely defined for the whole proset. Notice that $\Lambda_{f,g}$ is an FCC quotient of $\Lambda_1\sqcup \Lambda_2$.
\end{remark}

\begin{lemma}\label{lemma:existence of pushouts}
Let $\Lambda,\Lambda_1,\Lambda_2$ be locally finite prosets and $f:\Lambda\rightarrow \Lambda_1$, $g:\Lambda\rightarrow \Lambda_2$ FCC maps. Then there exists $\Lambda_{f,g}$ a locally finite proset and $p_1:\Lambda_1\rightarrow \Lambda_{f,g}$, $p_2:\Lambda_2\rightarrow \Lambda_{f,g}$ FCC maps such that the diagram
$$
\begin{tikzcd}
\Lambda \arrow[d, "g" description] \arrow[r, "f" description] & \Lambda_1 \arrow[d, "p_1" description] \\
\Lambda_2 \arrow[r, "p_2" description]                        & {\Lambda_{f,g}}                       
\end{tikzcd}$$
is a pushout diagram.
\end{lemma}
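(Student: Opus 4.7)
The plan is to extend the construction sketched in the preceding remark, which handles the case of irreducible $\Lambda$, to an arbitrary FCC domain $\Lambda$ by gluing component by component, and then verify the universal property.

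First, decompose $\Lambda = \bigsqcup_{i \in I} \Lambda^{(i)}$ into its irreducible components using Proposition \ref{prop:making prosets into disjoint parts}. Since $f$ and $g$ are FCC, each restriction $f|_{\Lambda^{(i)}}$ (respectively $g|_{\Lambda^{(i)}}$) is either constant or a convex embedding, and its image lies in a single irreducible component $\Lambda_{1,j(i)} \subset \Lambda_1$ (respectively $\Lambda_{2,k(i)} \subset \Lambda_2$). Define the equivalence relation $\equiv$ on $\Lambda_1 \sqcup \Lambda_2$ generated by the identifications $f(t) \equiv g(t)$ for every $t \in \Lambda$, together with the collapsings from the remark: for each $i$ with $|\Lambda^{(i)}| > 1$, if $f|_{\Lambda^{(i)}}$ is a convex embedding and $g|_{\Lambda^{(i)}}$ is constant then all points of $f(\Lambda^{(i)})$ are identified, and symmetrically when $g$ is the embedding. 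Set $\Lambda_{f,g} := (\Lambda_1 \sqcup \Lambda_2)/{\equiv}$ with the quotient preorder, and let $p_1, p_2$ be the compositions of the canonical embeddings $\Lambda_1, \Lambda_2 \hookrightarrow \Lambda_1 \sqcup \Lambda_2$ with the quotient map.

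Second, I would verify that $\equiv$ is an FCC equivalence relation, so that $\Lambda_{f,g}$ is a locally finite proset and $p_1, p_2$ are morphisms in $\textbf{\textup{prosets}}_{\textup{fin}}$, with $p_1 \circ f = p_2 \circ g$ by construction. Then I would check the universal property: given $h_1: \Lambda_1 \to \Lambda'$ and $h_2: \Lambda_2 \to \Lambda'$ FCC with $h_1 \circ f = h_2 \circ g$, the induced map $(h_1, h_2): \Lambda_1 \sqcup \Lambda_2 \to \Lambda'$ coming from the coproduct property (Lemma \ref{lemma:existence coproducts posets}) already identifies $h_1(f(t))$ with $h_2(g(t))$. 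For the collapsing clause, if $f|_{\Lambda^{(i)}}$ is a convex embedding and $g|_{\Lambda^{(i)}}$ is constant, then $h_1 \circ f = h_2 \circ g$ is constant on $\Lambda^{(i)}$, forcing $h_1$ to be constant on $f(\Lambda^{(i)})$, so $(h_1, h_2)$ descends through the extra identifications to yield the required unique FCC map $h: \Lambda_{f,g} \to \Lambda'$.

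The main obstacle will be handling the collapsing clauses correctly and verifying they are simultaneously necessary for $p_1, p_2$ to be FCC and sufficient for the universal property. Specifically, one must confirm that an FCC map out of an irreducible proset is either constant or a convex embedding (making the remark's two subcases exhaustive), and that $\Lambda_{f,g}$ has finite intervals: each $\equiv$-class sits inside the image under $p_1$ or $p_2$ of a bounded union of irreducible $\Lambda^{(i)}$'s, and any interval in $\Lambda_{f,g}$ lifts to a finite union of intervals in the locally finite prosets $\Lambda_1, \Lambda_2$, so local finiteness is preserved.
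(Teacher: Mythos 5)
Your overall strategy is the same as the paper's: realize $\Lambda_{f,g}$ as an FCC quotient of the coproduct $\Lambda_1\sqcup\Lambda_2$ by the relation generated by $f(t)\equiv g(t)$ together with whatever collapses the FCC condition forces, and get the universal property from the coproduct's universal property plus the fact that a non-injective FCC map must be constant on an irreducible component. The problem is that the construction of the equivalence relation --- which you yourself flag as ``the main obstacle'' --- is where essentially all the content of the lemma sits, and the relation you actually write down is not an FCC equivalence relation, in two ways. First, when $f|_{\Lambda^{(i)}}$ is a convex embedding and $g|_{\Lambda^{(i)}}$ is constant you identify only the points of $f(\Lambda^{(i)})$; but then $p_1$ restricted to the whole irreducible component of $\Lambda_1$ containing $f(\Lambda^{(i)})$ is non-injective without being constant, so it is not FCC. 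The remark you cite collapses the entire component $\Lambda_{j_0}$, not just the image, precisely for this reason.

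Second, and more seriously, the forced collapses propagate across components and your relation is not closed under this propagation. Take $\Lambda=A\sqcup B$ with $A\cong B\cong\mathbf{2\!<}$, let $\Lambda_1=C$ and $\Lambda_2=D$ be irreducible, and let $f|_A$, $g|_A$, $g|_B$ be convex embeddings while $f|_B$ is constant. Your collapsing clauses fire only for the component $B$, collapsing (at most) $D$. But once all of $D$ is identified to a point, transitivity gives $f(a_0)\equiv g(a_0)\equiv g(a_1)\equiv f(a_1)$ for the two distinct points $a_0\neq a_1$ of $A$, so $p_1|_C$ identifies two distinct points of $C$ without being constant on $C$ --- again not FCC --- yet no clause of yours collapses $C$. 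This cascade is exactly what the paper's proof spends its length on: the classification of the components of $\Lambda_1$ and $\Lambda_2$ into the sets $J_0,\dots,J_3$ and $K_0,\dots,K_3$, and the closure $\langle l\rangle$ collecting all components linked through components of $\Lambda$, which together determine in one pass every component that must be collapsed. Without that bookkeeping (or a proof that a smallest FCC equivalence relation containing your generated relation exists, which is itself nontrivial), $p_1$ and $p_2$ need not be morphisms of $\textbf{\textup{prosets}}_{\textup{fin}}$, so the square you build is not even a diagram in the category, let alone a pushout.
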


\begin{proof}
For the proof we will write $\Lambda=\sqcup_{i\in I}\Lambda_i$, $\Lambda_1=\sqcup_{j\in J}\Lambda_j$ and $\Lambda_2=\sqcup_{k\in K}\Lambda_k$ the decomposition of these sets into their components.\\
 
Define the sets:
$$\begin{array}{l}
    J_0=\{j\in J:  \text{for all }i\in I, f(\Lambda_i)\cap \Lambda_j=\emptyset\}\subset J\\
    J_1=\{j\in J\backslash J_0:  \forall i\in I, \text{ if }f(\Lambda_i)\subset \Lambda_j \text{ then }f|_{\Lambda_i} \text{ is a convex embedding}\}\subset J \\
    J_2=\{j\in J\backslash J_0:  \forall i\in I, \text{ if }\Lambda_i\neq 1, f(\Lambda_i)\subset \Lambda_j \text{ then }f|_{\Lambda_i} \text{ is a constant function}\}\subset J \\ 
    J_3=J\backslash (J_0\cup J_1\cup J_2) \\
    K_0=\{k\in K:  \text{for all }i\in I, g(\Lambda_i)\cap \Lambda_k=\emptyset\}\subset K\\
    K_1=\{k\in K\backslash K_0:  \forall i\in I, \text{ if }g(\Lambda_i)\subset \Lambda_k \text{ then }g|_{\Lambda_i} \text{ is a convex embedding}\}\subset K  \\
    K_2=\{k\in K\backslash K_0:  \forall i\in I, \text{ if }\Lambda_i\neq 1, g(\Lambda_i)\subset \Lambda_k \text{ then }g|_{\Lambda_i} \text{ is a constant function}\}\subset K  \\
    K_3= K\backslash (K_0\cup K_1 \cup K_2).
\end{array}$$
For each $l\in J\sqcup L$ also define the set $\langle l\rangle$ as the minimal set such that:
\begin{itemize}
    \item $l\in \langle l\rangle$.
    \item Let $k\in K$ be such that $k\in \langle l\rangle$. If $i\in I$ is such that $g(\Lambda_i)\subset \Lambda_k$ and $f(\Lambda_i)\subset \Lambda_j$, then $j\in \langle l\rangle$. 
    \item Let $j\in J$ be such that $j\in \langle l\rangle$. If $i\in I$ is such $f(\Lambda_i)\subset \Lambda_j$ and $g(\Lambda_i)\subset \Lambda_k$, then $k\in \langle l\rangle$. 
\end{itemize}
In other words, $\langle l\rangle$ is the set of components that contains $\Lambda_l$ and that will be ``glued together'' under the pushout. Notice that if $l\in J_0\sqcup K_0$ then $\langle l\rangle=\{l\}$. \\
  Let $L=\{\langle l\rangle:  l\in J\sqcup K\}$ the partition of $J\sqcup K$ defined as above. Define $\Lambda_{f,g}$ as follows:
$$\Lambda_{f,g}:=\bigsqcup_{\langle l\rangle\in L}\Lambda_{\langle l\rangle}$$
where $\Lambda_{\langle l\rangle}$ is defined as
$$\Lambda_{\langle l\rangle}=\left.\left(\bigsqcup_{l\in \langle l\rangle}\Lambda_l\right)\right/\equiv_{l}$$
where
\begin{itemize}
    \item for every $s\in \Lambda$, if there is $l\in \langle l\rangle$ such $f(s)\in \Lambda_l$ then $f(s)\equiv_{l} g(s)$.
    \item if $l\in \langle l\rangle$ is such that $l\in J_3\cup K_3$ then for every $s_1, s_2\in \Lambda_l$ we have $s_1\equiv_{l} s_2$.
    \item if $j\in J_1$ and $k\in K_2$ are such that there is $i\in I$ with $f(\Lambda_i)\subset \Lambda_j$, $g(\Lambda_i)\subset \Lambda_k$ then for every $s_1, s_2\in \Lambda_j$  we have $s_1\equiv_{l} s_2$.
    \item if $j\in J_2$ and $k\in K_1$ are such that there is $i\in I$ with $f(\Lambda_i)\subset \Lambda_j$, $g(\Lambda_i)\subset \Lambda_k$ then for every $s_1, s_2\in \Lambda_k$ we have $s_1\equiv_{l} s_2$.
\end{itemize}
with the order given by the FCC quotient. Notice that the construction is similar to the irreducible case, but here we have more components being ``glued together''. Notice that $\Lambda_{f,g}=(\Lambda_1\sqcup \Lambda_2)/\equiv$, for the $\equiv$ FCC equivalence relation defined above on each component $\Lambda_{\langle l\rangle}$. Let $i_1:\Lambda_1 \rightarrow\Lambda_1\sqcup \Lambda_2$ and $i_2:\Lambda_2\rightarrow \Lambda_1\sqcup \Lambda_2$ the embeddings from the coproduct and $\pi:\Lambda_1\sqcup \Lambda_2\rightarrow \Lambda_{f,g}$ given by the FCC quotient. We then define $p_1:=\pi\circ i_1$ and $p_2:=\pi\circ i_2$.\\
  Notice that $\Lambda_{f,g}$ is defined so that it is the maximal FCC quotient of $\Lambda_1\sqcup \Lambda_2$ such that there exists an unique FCC maps $p_1$, $p_2$ with $p_1\circ f=p_2\circ g$. The construction implies that if there are FCC maps $f':\Lambda_1\rightarrow \Lambda'$ and $g':\Lambda_2\rightarrow \Lambda'$, then there exists a unique FCC map $p':\Lambda_{f,g}\rightarrow \Lambda'$ such that $f'=p'\circ p_1$ and $g'=p'\circ p_2$. 
$$
\begin{tikzcd}
\Lambda \arrow[d, "g" description] \arrow[r, "f" description]        & \Lambda_1 \arrow[d, "p_1" description] \arrow[rdd, "f'" description] &          \\
\Lambda_2 \arrow[r, "p_2" description] \arrow[rrd, "g'" description] & {\Lambda_{f,g}} \arrow[rd, "p'" description, dashed]                  &          \\
                                                                     &                                                                      & \Lambda'
\end{tikzcd}$$
\end{proof}


Note that the intuitive idea of pushouts of prosets is getting two prosets, $\Lambda_1$ and $\Lambda_2$, with some common convex subsets, given by the maps $f$ and $g$ on each component of $\Lambda$, and making these subsets an intersection of them. Below we have an example of that:

\begin{example}
Let $\alpha=\mathbf{2}<=\{0,1\}$, $\Lambda_1=\mathbf{Z}$ and $\Lambda_2=\mathbf{Zig}$ and define 
$$\begin{array}{llll}
    i_1: & \mathbf{2}< & \longrightarrow & \mathbf{Z}\\
     & 0 & \longmapsto & 0\\
    & 1 & \longmapsto & -1,
\end{array}
$$
and
$$\begin{array}{llll}
    i_2: & \mathbf{2}< & \longrightarrow & \mathbf{Zig}\\
     & 0 & \longmapsto & 0\\
    & 1 & \longmapsto & -1.
\end{array}$$
The pushout of this diagram is the following poset:
$$
\begin{tikzcd}
	&&&&& \dots \\
	&&&& 1 \\
	& {-2} && 0 && 2 \\
	\ldots && {-1} && 1 & {} & \ldots \\
	& {-2} \\
	\dots
	\arrow[dashed, from=1-6, to=2-5]
	\arrow[dashed, from=2-5, to=3-4]
	\arrow[from=3-2, to=4-1]
	\arrow[from=3-2, to=4-3]
	\arrow[Rightarrow, from=3-4, to=4-3]
	\arrow[from=3-4, to=4-5]
	\arrow[from=3-6, to=4-5]
	\arrow[from=3-6, to=4-7]
	\arrow[dashed, from=4-3, to=5-2]
	\arrow[dashed, from=5-2, to=6-1]
\end{tikzcd}$$
Where the normal arrow represents the element of $\mathbf{Zig}$, the dashed arrows are the elements from $\mathbf{Z}$ and the double arrow represent the elements ``glued together'' under the pushout.
\end{example}

Notice that as $\emptyset$ is an initial object of $\textbf{\textup{prosets}}_{\textup{fin}}$ then this category has coequalizers [Proposition 5.14 \cite{Category_theory_101}]. We then get the following:

\begin{corollary}
The category $\textbf{\textup{prosets}}_{\textup{fin}}$ has all colimits.
\end{corollary}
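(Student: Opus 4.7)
The plan is to invoke the standard categorical fact that a category admits all small colimits precisely when it has all small coproducts together with all coequalizers of parallel pairs (see e.g.\ Mac Lane, \emph{Categories for the Working Mathematician}, Thm.\ V.2.1). Both ingredients have essentially been produced already.

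First, Lemma \ref{lemma:existence coproducts posets} supplies arbitrary coproducts, realized as the external disjoint union $\sqcup_{i \in I} \Lambda_i$. Second, the remark immediately preceding the corollary records that $\emptyset$ is initial in $\textbf{\textup{prosets}}_{\textup{fin}}$; together with the pushouts constructed in Lemma \ref{lemma:existence of pushouts} this yields coequalizers via the cited Proposition 5.14. Explicitly, for a parallel pair $f, g \colon \Lambda \to \Lambda'$, one forms the coproduct $\Lambda \sqcup \Lambda$ (which exists by Lemma \ref{lemma:existence coproducts posets}), takes the codiagonal $\nabla \colon \Lambda \sqcup \Lambda \to \Lambda$ and the copairing $\langle f, g\rangle \colon \Lambda \sqcup \Lambda \to \Lambda'$, and the pushout
$$
\begin{tikzcd}
\Lambda \sqcup \Lambda \arrow[d, "\nabla" description] \arrow[r, "{\langle f,g\rangle}" description] & \Lambda' \arrow[d] \\
\Lambda \arrow[r]                                                                                   & \mathrm{Coeq}(f,g)
\end{tikzcd}
$$
provided by Lemma \ref{lemma:existence of pushouts} is the desired coequalizer.

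With both pieces in hand, the colimit of any small diagram $D \colon \mathcal{I} \to \textbf{\textup{prosets}}_{\textup{fin}}$ is computed in the usual way as the coequalizer of the pair
$$\bigsqcup_{(\alpha \colon i \to j) \in \mathrm{Arr}(\mathcal{I})} D(i) \;\rightrightarrows\; \bigsqcup_{i \in \mathrm{Ob}(\mathcal{I})} D(i),$$
where one arrow is induced by the inclusions $D(i) \hookrightarrow \bigsqcup_{i} D(i)$ at the source index and the other by $D(\alpha)$ followed by the inclusion at the target index. Both of these are FCC since each restriction to an irreducible component of the left-hand coproduct is either an inclusion into a single summand or the image of $D(\alpha)$ composed with one, and $D(\alpha)$ is FCC by hypothesis.

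The only thing that could go wrong is leaving the subcategory of FCC maps, but since coproducts are the disjoint union (whose structural maps are obviously FCC componentwise) and the coequalizers we build via Lemma \ref{lemma:existence of pushouts} output FCC morphisms by construction, everything remains inside $\textbf{\textup{prosets}}_{\textup{fin}}$. Consequently there is no real obstacle: the bulk of the work was already carried out in Lemmas \ref{lemma:existence coproducts posets} and \ref{lemma:existence of pushouts}, and the corollary is a direct application of the standard colimits-from-coproducts-and-coequalizers theorem.
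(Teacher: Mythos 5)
Your proposal is correct and follows essentially the same route as the paper: the paper likewise obtains coequalizers from the initial object and the pushouts of Lemma \ref{lemma:existence of pushouts}, and then combines them with the coproducts of Lemma \ref{lemma:existence coproducts posets} via the standard ``coproducts plus coequalizers give all colimits'' theorem, merely citing the textbook results where you spell out the codiagonal construction and the FCC checks explicitly.
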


\begin{proof}
By Lemmas \ref{lemma:existence coproducts posets} and \ref{lemma:existence of pushouts} and Proposition [Corollary 5.22, Theorem 5.23 \cite{Category_theory_101}] our results follows.
\end{proof}

And as the category of $\textbf{\textup{prosets}}_{\textup{fin}}$ has only FCC maps and locally finite ordered sets, we get the following:

\begin{corollary}
Colimits of FCC maps are FCC maps and they preserve the property of being locally finite.
\end{corollary}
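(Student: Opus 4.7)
The plan is to derive this corollary directly from the preceding one. Having all colimits, in the concrete sense of the previous corollary, means that for every small diagram $D: J \to \textbf{\textup{prosets}}_{\textup{fin}}$, both the colimit object and the coprojection maps into it belong to the category $\textbf{\textup{prosets}}_{\textup{fin}}$ itself. Since by definition this category has only locally finite prosets as objects and only FCC maps as arrows, the colimit object is automatically locally finite and the universal maps into it are FCC — which is exactly what the corollary asserts.

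If one prefers a more hands-on verification, the proof can proceed by combining the explicit constructions in Lemmas \ref{lemma:existence coproducts posets} and \ref{lemma:existence of pushouts}. Every small colimit factors through coproducts and coequalizers, and coequalizers in turn through coproducts and pushouts because $\emptyset$ is an initial object. For coproducts, any interval $[s_1, s_2]$ in $\sqcup_{i \in I} \Lambda_i$ is contained in a single component, since elements of distinct components are independent; hence local finiteness is inherited, and the canonical inclusions $i_i$ are convex embeddings of each component, hence FCC. For pushouts, Lemma \ref{lemma:existence of pushouts} realizes $\Lambda_{f,g}$ as an FCC quotient of $\Lambda_1 \sqcup \Lambda_2$, and the coprojections $p_1, p_2$ are defined as compositions of component-embeddings with the FCC quotient projection, which are FCC by inspection.

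The only step requiring genuine attention along this constructive route is confirming that the pushout $\Lambda_{f,g}$ remains locally finite. Working component-by-component, an interval in the component $\Lambda_{\langle l \rangle}$ lifts to a union of intervals across the constituent $\Lambda_l$ with $l \in \langle l \rangle$. The FCC equivalence relation $\equiv_l$ only identifies either images of $\Lambda$ under $f, g$ or entire components collapsed to a point; in both cases the preimage of a would-be-interval is a finite union of finite intervals in the original components, so the image interval in the quotient is finite as well. This is the step I expect to be the main obstacle in a hands-on proof, but it is already baked into the proof that $\textbf{\textup{prosets}}_{\textup{fin}}$ admits all colimits, so I would present the corollary using the tautological reduction in the first paragraph rather than redoing this verification.
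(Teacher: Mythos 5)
Your first paragraph is exactly the paper's argument: the previous corollary establishes that $\textbf{\textup{prosets}}_{\textup{fin}}$ has all colimits, and since that category by definition contains only locally finite prosets and FCC arrows, the colimit objects and cocone maps inherit these properties tautologically. The additional hands-on verification you sketch is consistent with the constructions in Lemmas \ref{lemma:existence coproducts posets} and \ref{lemma:existence of pushouts} but is not needed; the proposal is correct and matches the paper's route.
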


In other words, direct limits, pushouts and disjoint unions of locally posets under FCC maps are locally finite posets. We now will use these results to show that coequalizers are mapped to equalizers via $\mathbf{M}_{\_}(P)$.

\begin{remark}
Let $f, g:\Lambda^1\rightarrow \Lambda^2$ be two maps. Let $\{\Lambda_i\}_{i\in I}$ the set of components of $\Lambda^1$ and $\{\Lambda_j\}_{j\in J}$ the set of components of $\Lambda^2$. We define $\textup{Coeq}(f, g):=\Lambda^2/\equiv_{\textup{Coeq}}$ as follows:
\begin{enumerate}
    \item For all $t\in \Lambda^1$, $f(t)\equiv_{\textup{Coeq}}g(t)$.
    \item If $j\in J$ is such that there is $i\in I$, $t\in \Lambda_i$ with $f(t), g(t)\in \Lambda_j$ and $f(t)\neq g(t)$, then for every $s_1, s_2\in \Lambda_j$, $s_1\equiv_{\textup{Coeq}}s_2$.
\end{enumerate}
and the map $p:\Lambda^2\rightarrow \textup{Coeq}(f, g)$ as the quotient map. Notice that the first condition makes $p\circ f=p \circ g$, and the second condition is a minimal condition for the map $p$ to be FCC.
\end{remark}

\begin{lemma}\label{lemma:functor maps coequalizers to equalizers}
Let $f_1, f_2:\Lambda^1\rightarrow \Lambda^2$ and $p:\Lambda^2\rightarrow \textup{Coeq}(f_1, f_2)$ be the coequalizer of $f_1$ and $f_2$. Then $\mathbf{M}[p]$ is the equalizer of $\mathbf{M}[f_1], \mathbf{M}[f_2]$. 
\end{lemma}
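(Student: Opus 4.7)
The plan is to verify the universal property of the equalizer in $\textbf{TopRings}$ by showing that the continuous ring homomorphism $\mathbf{M}[p] : \textup{M}_{\textup{Coeq}(f_1,f_2)}(P) \to \textup{M}_{\Lambda^2}(P)$ is injective, lands inside the equalizer subring $E := \{A \in \textup{M}_{\Lambda^2}(P) : \mathbf{M}[f_1](A) = \mathbf{M}[f_2](A)\}$, and surjects onto it. The first two properties come for free: applying the contravariant functor $\mathbf{M}_\_(P)$ (Theorem \ref{thrm:functor construction}) to the coequalizer identity $p \circ f_1 = p \circ f_2$ yields $\mathbf{M}[f_1] \circ \mathbf{M}[p] = \mathbf{M}[p \circ f_1] = \mathbf{M}[p \circ f_2] = \mathbf{M}[f_2] \circ \mathbf{M}[p]$, while injectivity is immediate from Remark \ref{remark:surj mapped to inj} since the coequalizer map $p$ is a surjective FCC map.

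The substance of the argument is surjectivity onto $E$. Given $A \in E$, I plan to construct a preimage $B \in \textup{M}_{\textup{Coeq}(f_1,f_2)}(P)$ coordinatewise: for each pair $r_1 \preceq r_2 \in \textup{Coeq}(f_1,f_2)$, choose lifts $s_1, s_2 \in \Lambda^2$ with $s_1 \preceq s_2$ in $\Lambda^2$ and $p(s_i) = r_i$, and set $B_{r_1, r_2} := A_{s_1, s_2}$. Once well-definedness is established, the identity $\mathbf{M}[p](B)_{s_1, s_2} = B_{p(s_1), p(s_2)} = A_{s_1, s_2}$ gives $\mathbf{M}[p](B) = A$, the verification that $B$ lies in $\textup{M}_{\textup{Coeq}(f_1,f_2)}(P)$ is immediate from the support condition, and continuity of the inverse assignment $A \mapsto B$ follows from Corollary \ref{coro:nets in these rings} together with the coordinatewise nature of the topologies.

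The main obstacle is well-definedness of $B$: I must show that $A_{s_1, s_2} = A_{s_1', s_2'}$ whenever $s_i \equiv_{\textup{Coeq}} s_i'$ and both pairs satisfy the $\preceq$ constraint. Using the description of $\equiv_{\textup{Coeq}}$ from the remark preceding the lemma, every such equivalence decomposes into a chain of elementary moves of two types: (1) moves $f_1(t) \leftrightarrow f_2(t)$ for some $t \in \Lambda^1$, and (2) the full collapse of a component $\Lambda_j \subset \Lambda^2$ that contains a witnessing conflict $f_1(t) \neq f_2(t) \in \Lambda_j$. Moves of type (1) are handled directly by the equalizer hypothesis $A_{f_1(t_1), f_1(t_2)} = A_{f_2(t_1), f_2(t_2)}$, applied to suitably chosen pairs $t_1 \preceq t_2$ in $\Lambda^1$ that route one coordinate through the elementary move while leaving the other fixed. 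Moves of type (2) are subtler: the collapse forces $A$ to be constant on the entire block $\Lambda_j \times \Lambda_j$, including at positions not in the image of $f_1$ or $f_2$, and deducing this from the equalizer condition requires combining the diagonal identity $A_{f_1(t), f_1(t)} = A_{f_2(t), f_2(t)}$ with the FCC structure of $f_1|_{\Lambda_i}$ and $f_2|_{\Lambda_i}$ on the component $\Lambda_i \subset \Lambda^1$ producing the collapse, then propagating by multiplying with diagonal idempotents of the form $1^{\{t\}}$ to access off-diagonal entries. This component-collapse case analysis is the hardest technical step.
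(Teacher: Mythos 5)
Your first two steps (that $\mathbf{M}[p]$ equalizes the pair, by functoriality applied to $p\circ f_1=p\circ f_2$, and that $\mathbf{M}[p]$ is injective, by Remark \ref{remark:surj mapped to inj}) coincide with the paper's. The gap is precisely in the step you flag as the hardest: the claim that $\mathbf{M}[p]$ surjects onto the subring $E=\{A\in\textup{M}_{\Lambda^2}(P):\mathbf{M}[f_1](A)=\mathbf{M}[f_2](A)\}$ is false, and the ``component collapse'' analysis cannot be made to work. Take $\Lambda^1=\mathbf{1}$, $\Lambda^2=\mathbf{2}<$, with $f_1$ and $f_2$ the convex embeddings sending the point to $0$ and to $1$ respectively. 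Then $\mathbf{M}[f_1](A)=A_{0,0}$ and $\mathbf{M}[f_2](A)=A_{1,1}$, so $E=\{A:A_{0,0}=A_{1,1}\}$, which contains $e^{(0,1)}$. But $\textup{Coeq}(f_1,f_2)=\mathbf{1}$ (the whole component collapses), and the image of $\mathbf{M}[p]$ consists only of the scalar matrices $r^{\Lambda^2}$. The underlying reason is that the equalizer condition only constrains entries of $A$ at positions reachable through the images of $f_1$ and $f_2$; it imposes nothing on an off-diagonal entry such as $A_{0,1}$ inside a collapsed component, so $A$ need not be constant (nor zero) on the block $\Lambda_j\times\Lambda_j$ and your coordinatewise lift $B$ has no consistent value to assign there. (A second, related problem: the identity $\mathbf{M}[p](B)_{s_1,s_2}=B_{p(s_1),p(s_2)}$ for \emph{all} comparable $s_1\neq s_2$ with $p(s_1)=p(s_2)$ would force $\mathbf{M}[p]$ to send the idempotent $1^{\{r\}}$ to a non-idempotent element; the generator $1^{\{r\}}$ must be sent to the diagonal element $1^{p^{-1}(r)}$, so $\mathbf{M}[p](B)$ vanishes at those positions.)

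Because of this, your plan, carried out literally, would identify $\mathbf{M}[p]$ with the inclusion of the full subring $E$, i.e., with the categorical equalizer in $\textbf{TopRings}$, and the example above shows these genuinely differ: the inclusion $E\hookrightarrow\textup{M}_{\Lambda^2}(P)$ equalizes $\mathbf{M}[f_1],\mathbf{M}[f_2]$ but does not factor through $\mathbf{M}[p]$. The paper's proof takes a different route that never computes the image of $\mathbf{M}[p]$: it verifies a restricted universal property, testing only against morphisms $h'\colon\textup{M}_{\Lambda'}(P)\rightarrow\textup{M}_{\Lambda^2}(P)$ satisfying $h'(r^{\Lambda'})=r^{\Lambda^2}$, and uses injectivity of $\mathbf{M}[p]$ to define the induced map as $(\mathbf{M}[p])^{-1}\circ h'$ where defined. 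To repair your argument you would need to replace $E$ by the correct description of the image of $\mathbf{M}[p]$ (matrices supported on $\preceq_{\Lambda^2}$ whose entries depend only on $(p(s_1),p(s_2))$ and which vanish at comparable pairs $s_1\neq s_2$ identified by $p$) and then restrict, as the paper does, the class of equalizing morphisms against which the universal property is checked.
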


\begin{proof}
First, let $f_1,f_2, p$ as in the statement. We first show that $\mathbf{M}[f_1]\circ\mathbf{M}[p]=\mathbf{M}[f_2]\circ\mathbf{M}[p]$. We then prove $\mathbf{M}[p]$ is the equalizer of $\mathbf{M}[f_1],\mathbf{M}[f_2]$.\\
  Notice that for $n=1, 2$:
$$\begin{array}{llll}
    \mathbf{M}[f_n]\circ\mathbf{M}[p]:& \textup{M}_{\textup{Coeq}(f_1, f_2)}(P) &\longrightarrow &  \textup{M}_{\Lambda^1}(P) \\
     & r^{\textup{Coeq}(f_1, f_2)} & \longmapsto & r^{\Lambda^1}\\
     & 1^{\{s\}} & \longmapsto & \left\{\begin{array}{ll}
                                            1^{(p\circ f_n)^{-1}(s)}   & \mbox{if }(p\circ f_n)^{-1}\neq\emptyset \\
                                            0     & \mbox{otherwise} 
     \end{array}\right. \\
     & e^{(s_1,s_2)}&\longmapsto & \left\{\begin{array}{ll}
                                          \sum_{t_1,t_2}e^{(t_1,t_2)}    &  \mbox{for }t_1\precnapprox t_2 \mbox{ and }p\circ f_n(t_1)=s_1, \\
                                           & p\circ f_n(t_2)=s_2\\
                                       0      &  \mbox{otherwise}
                                    \end{array}\right.    
\end{array}$$
Because $p$ is the coequalizer of $f_1$ and $f_2$ it follows that $\mathbf{M}[f_1]\circ\mathbf{M}[p]=\mathbf{M}[f_2]\circ\mathbf{M}[p]$. Notice that as $p$ is surjective, Remark \ref{remark:surj mapped to inj} implies $\mathbf{M}[p]$ is injective.\\
  Now let $\Lambda'$ be a poset and $h':{M}_{\Lambda'}(P)\rightarrow \textup{M}_{\Lambda^2}(P)$ be such that $\mathbf{M}[f_1]\circ h'=\mathbf{M}[f_2]\circ h'$ and $h'(r^{\Lambda'})=r^{\Lambda^2}$, for every $r\in P$. Define $h:\textup{M}_{\Lambda'}(P)\rightarrow \textup{M}_{\textup{Coeq}(f_1, f_2)}(P)$ as:
$$h(A)=\left\{\begin{array}{ll}
    (\mathbf{M}[p])^{-1}(h'(A)) & \mbox{if } (\mathbf{M}[p])^{-1}(h'(A))\neq\emptyset  \\
    0 & \mbox{otherwise.}
\end{array}\right.$$ 

\noindent This function is well-defined because $\mathbf{M}[p]$ is injective. Hence the following diagram commute:

$$
\begin{tikzcd}
\textup{M}_{\textup{Coeq}(f_1, f_2)}(P) \arrow[rr, "{\mathbf{M}[p]}" description]                         &  & \textup{M}_{\Lambda^2}(P) \arrow[rr, "{\mathbf{M}[f_1]}" description, shift left=3] \arrow[rr, "{\mathbf{M}[f_2]}" description, shift right] &  & \textup{M}_{\Lambda^1}(P) \\
                                                                                   &  &                                                                                                                                 &  &                  \\
\textup{M}_{\Lambda'}(P) \arrow[uu, "h" description, dashed] \arrow[rruu, "h'" description] &  &                                                                                                                                 &  &                 
\end{tikzcd}$$
That is, $\mathbf{M}[p]$ is the equalizer of $\mathbf{M}[f_1], \mathbf{M}[f_2]$.
\end{proof} 

\begin{corollary}\label{coro:functor maps colimits to limits}
The incidence functor $\mathbf{M}_{\_}$ sends colimits to limits.
\end{corollary}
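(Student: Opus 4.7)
The plan is to invoke the standard categorical fact that every colimit in a category with coproducts and coequalizers can be expressed as a coequalizer of a pair of maps between coproducts; dually, every limit in a category with products and equalizers can be built from an equalizer of a pair of maps between products. Since the paper cited Aluffi-style references (\cite{Category_theory_101}) for the construction of colimits from coproducts and coequalizers, I would simply reuse that machinery and track what happens under $\mathbf{M}_{\_}(P)$.

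More precisely, let $D:\mathcal{J}\rightarrow \textbf{\textup{prosets}}_{\textup{fin}}$ be a small diagram and let $(\Lambda, (\iota_j)_{j\in\mathcal{J}})$ be its colimit. By the standard construction, $\Lambda$ can be written as $\textup{Coeq}(f_1,f_2)$ where $f_1,f_2:\bigsqcup_{\phi:j\to k}D(j)\rightrightarrows \bigsqcup_{j\in\mathcal{J}}D(j)$ are the two canonical FCC maps coming from the diagram (one sending the copy of $D(j)$ indexed by $\phi:j\to k$ to $D(j)$ via the identity, the other via $D(\phi)$). The previous corollary guarantees this coequalizer exists in $\textbf{\textup{prosets}}_{\textup{fin}}$ and is an FCC map between locally finite prosets, so everything stays inside our category.

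Applying the contravariant functor $\mathbf{M}_{\_}(P)$, Corollary \ref{coro:functor maps coproducts to products} sends the two coproducts to the corresponding products $\prod_{\phi:j\to k}\textup{M}_{D(j)}(P)$ and $\prod_{j\in\mathcal{J}}\textup{M}_{D(j)}(P)$, and Lemma \ref{lemma:functor maps coequalizers to equalizers} sends the coequalizer $\Lambda$ to the equalizer of $\mathbf{M}[f_1]$ and $\mathbf{M}[f_2]$. But this equalizer of products is, by the dual of the construction used for $\Lambda$, exactly the limit of the diagram $\mathbf{M}_{\_}(P)\circ D^{\textup{op}}$ in $\textbf{TopRings}$. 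Hence $\mathbf{M}_\Lambda(P)$ together with the induced maps $\mathbf{M}[\iota_j]$ is a limit cone, which is what we needed to show.

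The main obstacle, and the only thing requiring care, is verifying that the coequalizer/coproduct presentation of the colimit really lives in $\textbf{\textup{prosets}}_{\textup{fin}}$ (i.e.\ that the relevant maps are FCC and that local finiteness is preserved) and that on the ring side, the equalizer and product computed in $\textbf{TopRings}$ agree with those inherited from the construction; the former is handled by the corollary that colimits of FCC maps are FCC, and the latter is routine since both products and equalizers are computed pointwise as in $\textbf{Set}$ and then inherit the subspace/product topology from $P^{\Lambda\times\Lambda}$ via Theorem \ref{Prop: prodiscrete ring} and Proposition \ref{prop:disjoint union and product rings}.
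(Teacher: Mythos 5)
Your proposal is correct and follows essentially the same route as the paper: the paper likewise reduces the claim to Corollary \ref{coro:functor maps coproducts to products} and Lemma \ref{lemma:functor maps coequalizers to equalizers}, then invokes the standard fact that a functor preserving coproducts and coequalizers preserves all colimits (cited there as an exercise in \cite{categories_working_math}), whereas you simply unfold that exercise by writing the colimit explicitly as a coequalizer of maps between coproducts. The only difference is expository, not mathematical.
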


\begin{proof}
By Lemma \ref{lemma:functor maps coequalizers to equalizers} and Corollary \ref{coro:functor maps coproducts to products} we have that the functor $ ^{op}\circ\mathbf{M}_\_(P):\textbf{\textup{prosets}}_{\textup{fin}}\Rightarrow \textbf{TopRings}^{op}$ maps coequalizers to coequalizers and coproducts to coproducts. It then follows that it maps colimits to colimits [Exercise 2 \cite{categories_working_math}].
\end{proof}

\subsection{Generating the category \texorpdfstring{$\textbf{\textup{prosets}}_{\textup{fin}}$}{TEXT}}

In this section, we focus on showing that the prosets of the form $\mathbf{m}\leftarrow \mathbf{n}$, for $n, m\in\mathbb{N}$ can generate all locally finite prosets under direct limits, disjoint unions, and pushouts of FCC maps.\\
  The next result is the version of Theorem \ref{Prop: prodiscrete ring} in the category $\textbf{\textup{prosets}}_{\textup{fin}}$.

\begin{theorem}\label{thrm:direct limit posets finite intervals}
Let $\Lambda$ be an irreducible, locally finite proset. Then $\Lambda\cong\varinjlim_{\alpha\in\Gamma(\Lambda)}\alpha$, where $\Gamma(\Lambda)=\{\alpha\subset \Lambda: $ is finite and convex$\}$ with subset order.
\end{theorem}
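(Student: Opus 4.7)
The plan is to exhibit $\Lambda$, together with the family of inclusions $\iota_\alpha\colon\alpha\hookrightarrow\Lambda$, as a colimit cocone for the diagram indexed by $(\Gamma(\Lambda),\subset)$ whose transition maps are the inclusions $\iota_{\alpha,\beta}\colon\alpha\hookrightarrow\beta$ for $\alpha\subset\beta$. Setting up this diagram is routine: $\Gamma(\Lambda)$ is directed by Corollary \ref{coro:directed set}; each $\iota_{\alpha,\beta}$ is injective and order preserving, and because $\alpha$ is closed under intervals in $\Lambda$ the convex subsets of $\alpha$ coincide with the convex subsets of $\beta$ contained in $\alpha$, so $\iota_{\alpha,\beta}$ is a convex embedding (hence FCC). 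The same argument shows each $\iota_\alpha$ is FCC, and compatibility $\iota_\beta\circ\iota_{\alpha,\beta}=\iota_\alpha$ is evident.

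The work is the universal property. Given a second cocone $(\Lambda',\{g_\alpha\})$ of FCC maps with $g_\beta\circ\iota_{\alpha,\beta}=g_\alpha$ whenever $\alpha\subset\beta$, I would define $g\colon\Lambda\to\Lambda'$ by $g(s):=g_\alpha(s)$ for any $\alpha\in\Gamma(\Lambda)$ with $s\in\alpha$. Such $\alpha$ exist by Proposition \ref{prop:finite subsets into finite convex}, and well-definedness follows from directedness of $\Gamma(\Lambda)$ together with cocone compatibility. Uniqueness of $g$ is forced by the requirement $g\circ\iota_\alpha=g_\alpha$. For order-preservation, if $s_1\preceq s_2$ then $[s_1,s_2]\in\Gamma(\Lambda)$ and $g_{[s_1,s_2]}$ is order preserving, so $g(s_1)\preceq g(s_2)$.

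The main obstacle is verifying that $g$ is FCC. Since $\Lambda$ is irreducible, this amounts to showing that $g$ is either a constant function or a convex embedding, and I would split into two cases. If $g(s)=g(s')$ for some $s\neq s'$, choose $\alpha\in\Gamma(\Lambda)$ containing $\{s,s'\}$; then $\alpha$ is itself irreducible (being convex, hence path-connected through its intervals), and $g_\alpha$ is an FCC map that fails to be injective, forcing $g_\alpha$ to be a constant map, say onto $\{u\}\subset\Lambda'$. For any $s''\in\Lambda$ choose $\beta\in\Gamma(\Lambda)$ containing $\alpha\cup\{s''\}$; the restriction of $g_\beta$ to $\alpha$ equals $g_\alpha$ by the cocone identity, so $g_\beta$ is not injective on the irreducible $\beta$ and must also be constant, necessarily with value $u$. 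Thus $g\equiv u$.

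Otherwise $g$ is injective, and I must check that for every convex $\Omega\subset\Lambda$ the image $g(\Omega)$ is convex in $\Lambda'$. For closedness under intervals, given $a,b\in\Omega$ with $g(a)\preceq g(b)$, apply Proposition \ref{prop:finite subsets into finite convex} inside the irreducible proset $\Omega$ to obtain a finite convex $\beta\subset\Omega$ containing $\{a,b\}$; then $g_\beta=g|_\beta$ is FCC and injective on irreducible $\beta$, hence a convex embedding, so $g(\beta)$ is convex in $\Lambda'$ and therefore contains $[g(a),g(b)]$, with all preimages lying in $\beta\subset\Omega$. Connectedness of $g(\Omega)$ is automatic because $g$ is order-preserving and hence sends any path of intervals in $\Omega$ to a path of intervals in $g(\Omega)$. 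Combining the two cases shows that $g$ is FCC, which completes the verification of the universal property and hence the isomorphism $\Lambda\cong\varinjlim_{\alpha\in\Gamma(\Lambda)}\alpha$.
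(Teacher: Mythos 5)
Your proof is correct, and it takes a genuinely different route from the paper's. The paper leans on its earlier result that $\textbf{\textup{prosets}}_{\textup{fin}}$ has all colimits: it forms the abstract direct limit $\varinjlim_{\alpha\in\Gamma(\Lambda)}\alpha$, obtains the canonical mediating map $f\colon\varinjlim_\alpha\alpha\to\Lambda$ from the cocone of inclusions, and then argues that $f$ is an isomorphism by asserting it is a convex embedding (``a direct limit of embeddings'') and checking surjectivity via $a\in\mathcal{N}_0(a)$. You instead verify the universal property of the colimit for $\Lambda$ itself, which shifts all the work onto showing that the mediating map $g$ out of $\Lambda$ is FCC. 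That dichotomy argument --- if $g$ is non-injective on some finite convex $\alpha$ then every $g_\beta$ with $\beta\supset\alpha$ is forced to be constant, and otherwise interval-closure of $g(\Omega)$ is checked by passing to a finite convex $\beta\subset\Omega$ containing the two endpoints --- is the genuine content, and it is carried out correctly (the reduction works because a convex $\Omega$ is closed under intervals in $\Lambda$, so finite convex subsets of $\Omega$ are finite convex subsets of $\Lambda$). What your approach buys is self-containedness: you never need the prior existence of colimits in the category, and you make explicit the FCC verification that the paper's phrase ``by the direct limit property, $f$ is an FCC map \dots hence a convex embedding'' leaves implicit. What the paper's approach buys is brevity, at the cost of resting on that unproved assertion about limits of embeddings. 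One cosmetic point: when you invoke Corollary \ref{coro:directed set} and Proposition \ref{prop:finite subsets into finite convex}, note that both are stated for \emph{irreducible} prosets, which is exactly the hypothesis of the theorem, so the invocation is legitimate; it is worth saying so explicitly since directedness of $\Gamma(\Lambda)$ fails for reducible $\Lambda$.
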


\begin{proof}
For each $\alpha_1\subset\alpha_2\in \Gamma(\Lambda)$ let $j_{\alpha_2\rightarrow\alpha_1}:\alpha_1\rightarrow\alpha_2$ be the convex embedding given by the subset structure. It is clear that if $\alpha_1\subset\alpha_2\subset\alpha_3$ then $j_{\alpha_1\rightarrow\alpha_2}\circ j_{\alpha_2\rightarrow\alpha_3}=j_{\alpha_1\rightarrow\alpha_3}$, hence the direct limit $\varinjlim_{\alpha\in\Gamma(\Lambda)}\alpha$ exists. For each $\alpha_i\in\Gamma(\Lambda)$ let $i_{\alpha_i}:\alpha_i\rightarrow \Lambda$ be the embedding of $\alpha_i$ into $\Lambda$. Because the outer triangle of the following diagram commutes, there exists an unique $f:\varinjlim_{\alpha\in\Gamma(\Lambda)}\alpha\rightarrow \Lambda$ such that the whole diagram commutes.
$$
\begin{tikzcd}
                                                                                           &  & \Lambda                                                                         &  &                                                                                                                                                        \\
                                                                                           &  &                                                                                 &  &                                                                                                                                                        \\
                                                                                           &  & \varinjlim_{\alpha\in\Gamma(\Lambda)}\alpha \arrow[uu, "f" description, dashed] &  &                                                                                                                                                        \\
\alpha_2 \arrow[rruuu, "i_{\alpha_2}" description] \arrow[rru, "j_{\alpha_2}" description] &  &                                                                                 &  & \alpha_1 \arrow[lluuu, "i_{\alpha_1}" description] \arrow[llu, "j_{\alpha_1}" description] \arrow[llll, "j_{\alpha_2\rightarrow\alpha_1}" description]
\end{tikzcd}$$
By the direct limit property, $f$ is an FCC map and a direct limit of embeddings. Hence it is a convex embedding. To complete our proof, we will show that $f$ is surjective.\\
  Let $a\in \Lambda$. The set $\mathcal{N}_0(a)$ is a finite convex subset of $\Lambda$ containing $a$. It is clear that $f\circ j_{\mathcal{N}_0(a)}(a)=a$, hence $f$ is surjective.
\end{proof}

Lastly we can show a property which can be taken from the category $\textbf{prosets}_{\text{fin}}$ and translated to the context of matrix rings over $P$ indexed by a poset.

\begin{theorem}\label{thrm:creating prosets from countable class}
All objects in $\textbf{\textup{prosets}}_{\textup{fin}}$ are generated by $\mathbf{m}\leftarrow\mathbf{n}$, for $m, n\in\mathbb{N}$, and  under disjoint unions, direct limits and pushouts of convex embeddings.
\end{theorem}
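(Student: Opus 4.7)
The plan is to reduce the general statement to finite irreducible prosets via the earlier structural results, and then build those from the $\mathbf{m}\leftarrow\mathbf{n}$'s by iterated pushouts of convex embeddings.

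By Proposition \ref{prop:making prosets into disjoint parts}, any locally finite $\Lambda$ decomposes as $\sqcup_{i\in I}\Lambda_i$ with each $\Lambda_i$ irreducible, and Lemma \ref{lemma:existence coproducts posets} realizes this disjoint union as the coproduct in $\textbf{\textup{prosets}}_{\textup{fin}}$. Theorem \ref{thrm:direct limit posets finite intervals} then expresses each irreducible $\Lambda_i$ as the direct limit $\varinjlim_{\alpha\in\Gamma(\Lambda_i)}\alpha$ of its finite convex subsets under convex embeddings, and each such $\alpha$ is a finite proset that, by another application of Proposition \ref{prop:making prosets into disjoint parts}, splits as a finite disjoint union of finite irreducible prosets. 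Thus the task reduces to building every finite irreducible proset from the $\mathbf{m}\leftarrow\mathbf{n}$'s using disjoint unions and pushouts of convex embeddings.

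Let $\Lambda$ be a finite irreducible proset with equivalence classes $E_1,\ldots,E_k$, and write $\mathbf{|E|}$ for the basic piece $\mathbf{n}=\mathbf{0}\leftarrow\mathbf{n}$ when $n=|E|$. If $k=1$ then $\Lambda\cong\mathbf{|E_1|}$ is already a basic piece. For $k\geq 2$, form the undirected cover graph $G$ whose vertices are the $E_r$ and whose edges are the Hasse covers of $\Lambda$; irreducibility of $\Lambda$ makes $G$ connected. Pick a spanning tree $T$ of $G$, root it at $E_1$, and process the tree edges in BFS order: starting from $\Lambda_0:=\mathbf{|E_1|}$, whenever a tree edge attaches a new class $E_j$ to an already-present class $E_i$, push the current construction out with the piece $\mathbf{|E_i|}\leftarrow\mathbf{|E_j|}$ (or its reverse, depending on which direction the cover goes) along the common class $\mathbf{|E_i|}$. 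Both legs are convex embeddings because in any proset an equivalence class $E$ is a convex subset: every interval $[s,t]$ with $s,t\in E$ equals $E$, and any two points of $E$ are mutually comparable. After all tree edges are processed we obtain a proset $\Lambda_T$ containing every equivalence class of $\Lambda$ together with every tree cover. For each remaining non-tree cover $E_i\prec E_j$ in $G\setminus T$, both classes already sit in $\Lambda_T$, so push $\Lambda_T$ out with $\mathbf{|E_i|}\leftarrow\mathbf{|E_j|}$ along the disjoint union $\mathbf{|E_i|}\sqcup\mathbf{|E_j|}$; the non-empty convex subsets of the apex are exactly its two components, each mapping to an equivalence class on both sides, so the legs are again convex embeddings. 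Iterating over the finitely many non-tree covers delivers $\Lambda$.

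The main obstacle is verifying that these pushouts produce $\Lambda$ and nothing more. Using the explicit description in Lemma \ref{lemma:existence of pushouts}, at every step each irreducible component of the apex is mapped by a convex embedding, so the apex falls in the $J_1$ and $K_1$ cases and the clauses that would collapse whole equivalence classes never fire; hence no unintended identifications are created. Transitivity of the pushout preorder then supplies exactly the non-cover relations of $\Lambda$, and together with the hand-installed covers this shows the final object is isomorphic to $\Lambda$.
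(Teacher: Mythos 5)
Your reduction to finite irreducible prosets (via Proposition \ref{prop:making prosets into disjoint parts}, Lemma \ref{lemma:existence coproducts posets} and Theorem \ref{thrm:direct limit posets finite intervals}) matches the paper exactly, but your treatment of the finite irreducible case is genuinely different and, as far as I can see, correct. The paper argues by induction on $|\Lambda|$: it chooses $a\neq b$ so that $\Lambda\backslash\mathcal{N}_0(a)$, $\Lambda\backslash\mathcal{N}_0(b)$ and $\Lambda\backslash(\mathcal{N}_0(a)\cup\mathcal{N}_0(b))$ are all irreducible (using that a finite connected graph on at least three vertices has two non-cut vertices) and realizes $\Lambda$ as the pushout of the two larger complements over the smaller one, each of strictly smaller cardinality. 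You instead work bottom-up: you take the quotient poset of equivalence classes, form the cover graph, and assemble $\Lambda$ by a chain of pushouts each of which glues in a single generator $\mathbf{m}\leftarrow\mathbf{n}$ along one class (tree edges) or a pair of classes (non-tree edges). The trade-off is roughly this: the paper's argument is shorter but leaves implicit both the existence of suitable $a,b$ and the verification that $\Lambda$ really is the claimed pushout, whereas your construction only ever pushes out along the generators themselves, and your explicit check that every apex component is a convex embedding (so the collapsing clauses $J_2,J_3,K_2,K_3$ of Lemma \ref{lemma:existence of pushouts} never fire) makes the ``no unintended identifications'' step transparent. The one point you should spell out a little more is the final sentence: you need that in a finite proset the preorder between distinct equivalence classes is exactly the reflexive--transitive closure of the covering relation on $\Lambda/\!\sim$ (standard for finite posets), and that the pushout carries the minimal preorder compatible with its universal property; together these give that the assembled object has precisely the relations of $\Lambda$, neither more nor fewer. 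With that made explicit, your argument is a valid alternative proof.
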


\begin{proof}
We first prove by induction that we can generate all the irreducible, finite prosets. from The proof for this case will be by induction on the cardinality of $\Lambda$.\\
  Our generating set generates contains all irreducible posets of cardinalities $0$, $1$ and $2$, proving the base cases, as noted in Example \ref{ex:basic posets}. Assume the result is true for every irreducible $\Lambda$ with cardinality $k$, for some $k>2$.\\
  Let $\Lambda\notin\{\mathbf{i}\leftarrow\mathbf{j}\}_{i+j=k+1}$ be an irreducible proset with cardinality $k+1$. Let $a\neq b\in \Lambda$ such that  $\Lambda\backslash\mathcal{N}_0(a)$, $\Lambda\backslash\mathcal{N}_0(b)$, and $\Lambda\backslash(\mathcal{N}_0(a)\cup\mathcal{N}_0(b))$ are irreducible. Let $\alpha=\Lambda\backslash (\mathcal{N}_0(a)\cup\mathcal{N}_0(b))$, $\Lambda_1=\Lambda\backslash \mathcal{N}_0(a)$ and $\Lambda_2=\Lambda\backslash \mathcal{N}_0(b)$. Such elements $a,b$ exists as every finite connected graph with at least $3$ vertices has at least two non-cut vertices. The conditions on $\Lambda$ imply $\Lambda\backslash(\mathcal{N}_0(a)\cup\mathcal{N}_0(b))$ is a non-empty set.\\
  Define $i_1:\alpha\rightarrow \Lambda_1$, $i_2:\alpha\rightarrow \Lambda_2$ the embeddings given by the subset structure. It follows that $|\Lambda_1|,|\Lambda_2|,|\alpha|<k+1$ and $\Lambda$ is the pushout of these prosets, proving our induction.\\
  Theorem \ref{thrm:direct limit posets finite intervals} implies that all infinite irreducible prosets are generated by finite, irreducible prosets under direct limits. Proposition \ref{defi:disjoint union prosets} implies that if all the irreducible posets are generated, so are the reducible ones under disjoint unions.
\end{proof} 

\begin{corollary}\label{coro:building blocks for M_(P)}
The subclass of all rings of the form $\textup{M}_{\Lambda}(P)$, for $\Lambda$ a poset, is generated by $\textup{M}_{\mathbf{m}\leftarrow\mathbf{n}}(P)$, for $m,n\in\mathbb{N}$, under products, inverse limits and pullbacks.
\end{corollary}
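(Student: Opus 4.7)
The plan is to transport the generation result for prosets (Theorem \ref{thrm:creating prosets from countable class}) across the incidence functor $\mathbf{M}_\_(P)$, using the fact that it is a contravariant functor that sends colimits to limits (Corollary \ref{coro:functor maps colimits to limits}). Concretely, Theorem \ref{thrm:creating prosets from countable class} asserts that every object of $\textbf{prosets}_{\textup{fin}}$ is obtained from the countable family $\{\mathbf{m}\leftarrow\mathbf{n}\}_{m,n\in\mathbb{N}}$ by iterating three operations: disjoint unions, direct limits (along convex embeddings), and pushouts of convex embeddings. I would apply $\mathbf{M}_\_(P)$ to such a generation tree and read off the dual operations on rings.

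First I would note the base case: for every $m,n\in\mathbb{N}$ the ring $\mathbf{M}_\_(P)(\mathbf{m}\leftarrow\mathbf{n})=\textup{M}_{\mathbf{m}\leftarrow\mathbf{n}}(P)$ already lies in the proposed generating family. For the inductive step I would translate each of the three colimit constructions. Disjoint unions of prosets are coproducts in $\textbf{prosets}_{\textup{fin}}$ by Lemma \ref{lemma:existence coproducts posets}; under $\mathbf{M}_\_(P)$ these become products by Corollary \ref{coro:functor maps coproducts to products} (equivalently, by Proposition \ref{prop:disjoint union and product rings}). Direct limits of prosets are filtered colimits; since $\mathbf{M}_\_(P)$ is contravariant and sends colimits to limits, these become inverse limits of the corresponding incidence rings. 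Pushouts of prosets (Lemma \ref{lemma:existence of pushouts}) are finite colimits and therefore map to pullbacks in $\textbf{TopRings}$. Thus each elementary colimit operation used to build $\Lambda$ corresponds to one of the three allowed generating operations on the ring side: product, inverse limit, pullback.

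Finally, I would chase the induction: given that $\Lambda$ is built from the generators via a (transfinite) tree of the three proset operations, applying $\mathbf{M}_\_(P)$ produces a dual tree of ring constructions whose leaves are $\textup{M}_{\mathbf{m}\leftarrow\mathbf{n}}(P)$ and whose internal nodes are products, inverse limits and pullbacks. This yields $\textup{M}_\Lambda(P)$ in the generated subclass and completes the proof.

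The proof is almost entirely a bookkeeping exercise once the functoriality and colimit-to-limit property are in hand, so there is no serious obstacle. The only point requiring care is making sure the inductive hypothesis in Theorem \ref{thrm:creating prosets from countable class} really produces each $\Lambda$ through a diagram built out of these three colimit shapes exclusively (as opposed to more general colimits), so that the image under $\mathbf{M}_\_(P)$ stays inside the specified three operations; this is immediate from the statement of that theorem.
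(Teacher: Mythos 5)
Your proposal is correct and matches the paper's argument exactly: the paper derives this corollary directly from Corollary \ref{coro:functor maps colimits to limits} and Theorem \ref{thrm:creating prosets from countable class}, which is precisely the transport-along-the-incidence-functor argument you spell out. Your version simply makes explicit the bookkeeping (coproducts to products, direct limits to inverse limits, pushouts to pullbacks) that the paper leaves implicit.
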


\begin{proof}
A direct corollary from Corollary \ref{coro:functor maps colimits to limits} and Theorem \ref{thrm:creating prosets from countable class}.
\end{proof}

This corollary can be useful for proving general properties of these rings. Instead of proving for all of them individually, you can prove for the generators and show that these properties are preserved by inverse limits and pullbacks.

\section{Incidence functor and the group of units}\label{Incidence functor and the group of units}

\begin{definition}\index{$\textup{GL}_\Lambda(P)$|ndx}\index{general linear group indexed by a proset|ndx}
Given $P$ a ring and $\Lambda$ a proset. We define the \textbf{general linear group of $P$ in relation to $\Lambda$} as:
$$\textup{GL}_\Lambda(P):=\left\{ A\in \textup{M}_\Lambda(P)
:  A^{-1}\in \textup{M}_\Lambda(P)\right\}.$$
That is, the group of units of $\textup{M}_\Lambda(P)$.
\end{definition} 

\begin{definition}\index{$\textup{DET}(A)$|ndx}
Let $\Lambda$ be a locally finite proset and $P$ be a ring. Let $A\in \textup{M}_\Lambda(P)$. We define the set $\textup{DET}(A)$ as:
$$\textup{DET}(A)=\{\det(\pi_\alpha(A)):  \alpha\in \Gamma(\Lambda)\}\subset P^{\Gamma(\Lambda)}.$$
\end{definition}

The following is a directly corollary of Theorem \ref{thrm:invertible as product} and the fact the groups of the form $\textup{GL}_\Lambda(P)$ are inverse limits of finite matrix rings. 

\begin{proposition}\label{prop:conditions to have inverse on lambda}\index{absolute topological ring|ndx}
Let $P$ be a commutative ring and $\Lambda$ a preordered set, and $A\in \textup{M}_\Lambda(P)$. Then $A\in \textup{GL}_\Lambda(P)$ if, and only if, $\pi_\alpha(A)$ is invertible for all $\alpha\in\Gamma(\Lambda)$. That is, $A\in \textup{GL}_\Lambda(P)$ if, and only if, $\textup{DET}(A)\subset \prod_{\alpha\in\Gamma(\Lambda)}P^*$. 
\end{proposition}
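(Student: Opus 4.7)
The plan is to reduce the question to the inverse-limit description $\textup{M}_\Lambda(P) \cong \varprojlim_{\alpha \in \Gamma(\Lambda)} \textup{M}_\alpha(P)$ from Theorem \ref{Prop: prodiscrete ring} (combined with Proposition \ref{prop:disjoint union and product rings} to handle reducible $\Lambda$), and then use Theorem \ref{thrm:conditions invertible} to translate invertibility in each finite block $\textup{M}_\alpha(P)$ into the statement about $\textup{DET}(A)$.

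First I would dispatch the easy direction. If $A \in \textup{GL}_\Lambda(P)$ has inverse $A^{-1}\in \textup{M}_\Lambda(P)$, then since each projection $\pi_\alpha\colon \textup{M}_\Lambda(P)\to \textup{M}_\alpha(P)$ is a unital ring homomorphism, applying $\pi_\alpha$ to the equations $AA^{-1}=A^{-1}A=1^\Lambda$ yields $\pi_\alpha(A)\pi_\alpha(A^{-1})=\pi_\alpha(A^{-1})\pi_\alpha(A)=1^\alpha$, so $\pi_\alpha(A)$ is invertible. Theorem \ref{thrm:conditions invertible} then converts this to the determinant form.

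The converse is the substantive direction. Given that $\pi_\alpha(A)$ is invertible in $\textup{M}_\alpha(P)$ for every $\alpha\in\Gamma(\Lambda)$, set $B_\alpha:=\pi_\alpha(A)^{-1}$ and try to assemble these into a single inverse $B\in \textup{M}_\Lambda(P)$ via the inverse limit. The key compatibility check is that for $\alpha\subset\beta$ the transition map $\pi_{\beta\to\alpha}$ satisfies $\pi_{\beta\to\alpha}(B_\beta)=B_\alpha$. This follows from uniqueness of inverses together with the fact that $\pi_{\beta\to\alpha}$ is a unital ring homomorphism:
$$\pi_{\beta\to\alpha}(B_\beta)\,\pi_\alpha(A)=\pi_{\beta\to\alpha}\bigl(B_\beta\,\pi_\beta(A)\bigr)=\pi_{\beta\to\alpha}(1^\beta)=1^\alpha,$$
and symmetrically on the other side. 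Hence $(B_\alpha)_\alpha$ determines an element $B\in \varprojlim_{\alpha}\textup{M}_\alpha(P)\cong \textup{M}_\Lambda(P)$, and since $\pi_\alpha(AB-1^\Lambda)=0$ and $\pi_\alpha(BA-1^\Lambda)=0$ for every $\alpha$, injectivity of the universal cone forces $AB=BA=1^\Lambda$.

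The last step is the bookkeeping for reducible $\Lambda$. Since every finite convex subset is necessarily contained in a single irreducible component, Proposition \ref{prop:disjoint union and product rings} gives $\textup{M}_\Lambda(P)\cong\prod_i\textup{M}_{\Lambda_i}(P)$ and $\Gamma(\Lambda)=\bigsqcup_i\Gamma(\Lambda_i)$, so invertibility of $A$ in the product ring and the condition on $\textup{DET}(A)$ both decompose coordinatewise, reducing everything to the irreducible case. The main obstacle is the verification of compatibility with the transition maps $\pi_{\beta\to\alpha}$; conceptually it is only the observation that ring homomorphisms carry inverses to inverses, but one does need to be careful that $\pi_{\beta\to\alpha}$ is indeed unital (which is ensured by its construction in the proof of Theorem \ref{Prop: prodiscrete ring}). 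Everything else is a routine application of the inverse limit universal property.
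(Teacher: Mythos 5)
Your proposal is correct and follows essentially the same route as the paper, which states the proposition as a direct corollary of the inverse-limit description $\textup{M}_\Lambda(P)\cong\varprojlim_{\alpha}\textup{M}_\alpha(P)$ together with the determinant criterion for invertibility over a commutative ring; you have simply filled in the compatibility check for the family $\bigl(\pi_\alpha(A)^{-1}\bigr)_\alpha$ that the paper leaves implicit.
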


A stronger version of Proposition \ref{prop:conditions to have inverse on lambda} is given at [Theorem 1.16 \cite{Matrices1973}].

\begin{corollary}\label{coro:matrices absolute topological}
Let $P$ be a commutative, absolute topological ring. Then for every finite preordered set $\Lambda$ we have that $\textup{M}_\Lambda(P)$ is an absolute topological ring.
\end{corollary}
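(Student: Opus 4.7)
The plan is to reduce to $\mathrm{GL}_n(P)$ where $n=|\Lambda|$ and apply the adjugate formula (Theorem \ref{thrm:invertible as product}) to obtain an explicit continuous formula for inversion. First I would note that since $\Lambda$ is finite, an enumeration $\Lambda=\{s_1,\dots,s_n\}$ realizes $\textup{M}_\Lambda(P)$ as a unital subring of $\textup{M}_n(P)$ cut out by the vanishing of those coordinates $(s_i,s_j)$ with $s_i\not\preceq s_j$. Since $P$ is Hausdorff, this is a closed subset of $P^{n\times n}\cong\textup{M}_n(P)$, and the subspace topologies induced from $P^{\Lambda\times\Lambda}$ and from $\textup{M}_n(P)$ coincide. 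The ring multiplication in $\textup{M}_\Lambda(P)$ is the restriction of the multiplication on $\textup{M}_n(P)$, which is continuous because $P$ is a topological ring; in particular multiplication on $\textup{GL}_\Lambda(P)$ is continuous in the subspace topology.

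The main point is continuity of inversion. By Theorem \ref{thrm:invertible as product}, for $A\in\mathrm{GL}_n(P)$ we have $A^{-1}=\det(A)^{-1}\,\textup{adj}(A)$. The map $A\mapsto\det(A)$ is a polynomial in the entries of $A$, and each entry of $\textup{adj}(A)$ is likewise a polynomial in the entries of $A$; since $P$ is a topological ring, these polynomial maps $\det\colon\textup{M}_n(P)\to P$ and $\textup{adj}\colon\textup{M}_n(P)\to\textup{M}_n(P)$ are continuous. By Theorem \ref{thrm:conditions invertible}, $A\in\mathrm{GL}_n(P)$ precisely when $\det(A)\in P^*$, so $\det$ restricts to a continuous map $\mathrm{GL}_n(P)\to P^*$. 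Because $P$ is an \emph{absolute} topological ring, inversion $p\mapsto p^{-1}$ is continuous on $P^*$ (with the subspace topology from $P$). Composing, $A\mapsto\det(A)^{-1}$ is continuous on $\mathrm{GL}_n(P)$, and then $A\mapsto\det(A)^{-1}\textup{adj}(A)$ is continuous as a composition of continuous maps with the continuous multiplication of $\textup{M}_n(P)$.

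Finally I would restrict to $\mathrm{GL}_\Lambda(P)$. By definition $\mathrm{GL}_\Lambda(P)=\{A\in\textup{M}_\Lambda(P):A^{-1}\in\textup{M}_\Lambda(P)\}\subseteq\mathrm{GL}_n(P)\cap\textup{M}_\Lambda(P)$, and the inversion on $\mathrm{GL}_\Lambda(P)$ agrees with the inversion on $\mathrm{GL}_n(P)$ restricted to this subspace (and lands back in $\textup{M}_\Lambda(P)$ by definition). Hence inversion on $\mathrm{GL}_\Lambda(P)$ is the restriction of a continuous map between subspaces of $\textup{M}_n(P)$ and therefore is continuous. Together with continuity of multiplication, this shows that $\mathrm{GL}_\Lambda(P)$ with the subspace topology is a topological group, i.e.\ $\textup{M}_\Lambda(P)$ is an absolute topological ring.

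The proof is essentially mechanical once one has Theorems \ref{thrm:conditions invertible} and \ref{thrm:invertible as product} and Definition \ref{thrm:giving topology to unit group}; the only thing that needs care is making sure the two topologies on $\textup{M}_\Lambda(P)$ (from $P^{\Lambda\times\Lambda}$ and from $\textup{M}_n(P)$) match, so that the adjugate formula computed in $\textup{M}_n(P)$ genuinely produces a continuous inversion on $\mathrm{GL}_\Lambda(P)$. No serious obstacle arises.
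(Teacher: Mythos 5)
Your proof is correct and rests on the same core argument as the paper's: the adjugate inversion formula $A^{-1}=\det(A)^{-1}\operatorname{adj}(A)$ from Theorem \ref{thrm:invertible as product}, continuity of the polynomial maps $\det$ and $\operatorname{adj}$ over a topological ring, and continuity of $p\mapsto p^{-1}$ on $P^*$ supplied by the absolute hypothesis on $P$. The only difference is organizational: you realize $\textup{GL}_\Lambda(P)$ once and for all as a subspace of $\textup{GL}_n(P)$ via a closed unital embedding $\textup{M}_\Lambda(P)\hookrightarrow \textup{M}_n(P)$, whereas the paper applies the same adjugate argument to each projection $\pi_\alpha(A)$ for $\alpha\in\Gamma(\Lambda)$ and then glues by the inverse-limit property --- for finite $\Lambda$ these amount to the same reduction.
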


\begin{proof}
Let $A$ be an element of $ \textup{GL}_\Lambda(P)$. By Theorem \ref{thrm:cayley-hamilton} we have that for every $\alpha\in\Gamma(\Lambda)$, the map $\pi_\alpha(A)\mapsto \textup{adj}(\pi_\alpha(A))$ is a polynomial, hence it is continuous. As $P$ is an absolute topological ring and $\det(\pi_{\alpha}(A))\in P^*$, we have by Theorem \ref{thrm:invertible as product} and Proposition \ref{prop:conditions to have inverse on lambda} that if $\pi_\alpha(A)\in \textup{GL}_\alpha(P)$, the map $\pi_\alpha(A)\mapsto \pi_\alpha(A^{-1})$ is continuous. As the inverse map is continous for all $\textup{GL}_\alpha(P)$, $\alpha\in\Gamma(\Lambda)$, the inverse limit property implies the map $A\mapsto A^{-1}$ is continuous for $\textup{GL}_\Lambda(P)$. 
\end{proof}

As $P$ is commutative, when $\Lambda$ is a poset we can define $\textup{GL}_\Lambda(P)$ as all the infinite matrices such that the diagonal elements belongs to $P^*$. In the case $P$ is not commutative this is not always the case, as shown in articles \cite{inverse_upper_that_is_lower} and \cite{inverse_upper_that_is_lower_2}.\\
  Given that the functor that sends a ring to its units group is left adjoint [Proposition 9.14 \cite{Category_theory_101}],  Corollaries \ref{coro:matrices absolute topological}, \ref{coro:functor maps colimits to limits}, and Theorem \ref{Prop: prodiscrete ring} allow us to prove the following for the case $\textup{GL}_\Lambda(P)$:

\begin{proposition}\label{prop:making group inverse limit}
Let $P$ be an absolute topological ring and $\Lambda$ a locally finite proset. The following are true:
\begin{enumerate}
    \item If $\Lambda$ is infinite, then $\textup{GL}_
\Lambda(P)\cong\varprojlim_{\alpha\in \Gamma(\Lambda)} \textup{GL}_\alpha(P)$ is a topological group.
    \item Given $\Lambda$ is a proset, then $\textup{GL}_\Lambda(P)$ can be generated by $\textup{GL}_{\mathbf{m}\leftarrow\mathbf{n}}(P)$ under products, inverse limits and pullbacks.
\end{enumerate} 
\end{proposition}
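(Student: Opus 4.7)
The plan is to deduce both parts from three ingredients already established in the paper: (i) the realization of $\textup{M}_\Lambda(P)$ as an inverse limit of finite matrix rings (Theorem \ref{Prop: prodiscrete ring}, combined with Proposition \ref{prop:disjoint union and product rings} in the reducible case); (ii) the fact that the units functor is a right adjoint and hence preserves all limits; and (iii) the generation result Theorem \ref{thrm:creating prosets from countable class} together with Corollary \ref{coro:functor maps colimits to limits}.

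For part (1), I would first decompose $\Lambda = \bigsqcup_{i\in I}\Lambda_i$ into its irreducible components (Proposition \ref{prop:making prosets into disjoint parts}). Proposition \ref{prop:disjoint union and product rings} then gives $\textup{M}_\Lambda(P) \cong \prod_{i\in I}\textup{M}_{\Lambda_i}(P)$, and Theorem \ref{Prop: prodiscrete ring} realizes each factor with infinite $\Lambda_i$ as $\varprojlim_{\alpha\in \Gamma(\Lambda_i)}\textup{M}_\alpha(P)$; since a product of inverse limits is itself an inverse limit, $\textup{M}_\Lambda(P)$ is an inverse limit of finite matrix rings over $P$. Applying the units functor, which is a right adjoint and therefore preserves limits, yields $\textup{GL}_\Lambda(P) \cong \varprojlim_{\alpha\in\Gamma(\Lambda)}\textup{GL}_\alpha(P)$. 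For the topological structure I would invoke Corollary \ref{coro:matrices absolute topological}: because $P$ is absolute topological, each $\textup{GL}_\alpha(P)$ is already a topological group, and an inverse limit of topological groups is a topological group whose topology agrees with the subspace topology inherited from the ambient inverse limit of topological rings $\textup{M}_\Lambda(P)$.

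For part (2), the argument is purely formal once the machinery of Section \ref{Categorial properties} is in place. Theorem \ref{thrm:creating prosets from countable class} expresses every locally finite proset as an iterated colimit built from the prosets $\mathbf{m}\leftarrow \mathbf{n}$ via disjoint unions, direct limits, and pushouts of convex embeddings. The contravariant incidence functor $\mathbf{M}_{\_}(P)$ sends these colimits to the corresponding limits in $\textbf{TopRings}$ by Corollary \ref{coro:functor maps colimits to limits}, so disjoint unions become products, direct limits become inverse limits, and pushouts become pullbacks. Post-composing with the units functor (again a right adjoint, hence limit-preserving) transfers the generation statement from incidence rings to their groups of units, showing that $\textup{GL}_\Lambda(P)$ is built from $\textup{GL}_{\mathbf{m}\leftarrow\mathbf{n}}(P)$ under exactly products, inverse limits, and pullbacks.

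The hardest step, in my view, is the topological verification in part (1), namely that the isomorphism $\textup{GL}_\Lambda(P) \cong \varprojlim_{\alpha\in\Gamma(\Lambda)}\textup{GL}_\alpha(P)$ is a homeomorphism and that the subspace topology on $\textup{GL}_\Lambda(P) \subseteq \textup{M}_\Lambda(P)$ makes inversion continuous. This is precisely where the ``absolute'' hypothesis on $P$ is used via Corollary \ref{coro:matrices absolute topological}: it guarantees that inversion is continuous at every finite stage, and the universal property of the inverse limit then propagates continuity of inversion to $\textup{GL}_\Lambda(P)$ itself.
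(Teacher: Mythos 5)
Your proposal is correct and follows essentially the same route as the paper, which derives the proposition from exactly the ingredients you list: the units functor being a right adjoint (hence limit-preserving), Theorem \ref{Prop: prodiscrete ring} together with Proposition \ref{prop:disjoint union and product rings}, Corollary \ref{coro:matrices absolute topological} for continuity of inversion at each finite stage, and Corollary \ref{coro:functor maps colimits to limits} with Theorem \ref{thrm:creating prosets from countable class} for part (2). Your explicit reduction to irreducible components and the remark that a product of inverse limits is an inverse limit is a small but welcome clarification that the paper leaves implicit.
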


We can also describe some normal subgroups of $\textup{GL}_\Lambda(P)$ in a similar way to the ideals of $\textup{M}_\Lambda(P)$, as follows:
\ \textbf{Notation}:[Normal subgroups]\label{defi:normal subgroups}
Let $\Lambda$ a locally finite proset and $P$ a commutative ring. We denote the following normal subgroups:
\begin{itemize}
    \item Given $s_1  \preceq s_2$ in $\Lambda$ we define 
    $$ N^{\Lambda}_{[s_1,s_2]}(P)=\{A\in \textup{GL}_{\Lambda}(P):  \text{if }s\in [s_1, s_2], A_{s,s}=1 \text{, and if } t_1\neq t_2\in[s_1, s_2],  A_{t_1,t_2}= 0 \}.$$ 
    This normal subgroup has quotient 
    $$\textup{GL}_{\Lambda}(P)/N^{\Lambda}_{[s_1, s_2]}(P)\cong \textup{GL}_{[s_1, s_2]}(P).$$
    \item For a convex set $\Lambda'\subset \Lambda$, the normal subgroup
    $$N^{\Lambda}_{\Lambda'}=\bigcap_{s_1  \preceq s_2\in \Lambda'}N^{\Lambda}_{[s_1, s_2]}.$$
    This normal subgroup has quotient $$\textup{GL}_{\Lambda}(P)/N^{(\Lambda,  \preceq)}_{\Lambda'}\cong \textup{GL}_{\Lambda'}(P).$$
    \item For $\{\Lambda_i\}_{i\in I}$ a locally convex set of $\Lambda$ we define the normal subgroup 
    $$N^{\Lambda}_{\{\Lambda_i\}_{i\in I}}=\bigcap_{i\in I}N^{\Lambda}_{\Lambda_i}.$$
    This normal subgroup has quotient 
    $$\textup{GL}_{\Lambda}(P)/N^{\Lambda}_{\{\Lambda_i\}_{i\in I}}\cong \prod_{i\in I} \textup{GL}_{\Lambda_i}(P).$$
\end{itemize}

\begin{example}
Let $\Lambda=\mathbf{3}<$, $P=\mathbb{Z}$ and $\alpha=\{0,1\}$. Then $N^{\mathbf{3}<}_\alpha$ is the normal subgroup with elements of the form:
$$\left(\begin{array}{lll}
1 & 0 & a_{0,2} \\
0 & 1 & a_{1,2} \\
0 & 0 & a_{2,2} 
\end{array}\right)$$
where $a_{2,2}\in \{-1,1\}$. 
\end{example}

The following is the equivalent for Corollary \ref{coro:open iff matrices} for the groups $\textup{GL}_\Lambda(P)$.

\begin{corollary}\label{coro:open iff matrices group}
Let $P$ a commutative, absolute topological ring, $\Lambda$ a locally finite proset and $\{\Lambda_i\}_{i\in I}\subset \Lambda$ locally convex. Then the following are true:
\begin{enumerate}
    \item The normal subgroup $N^{\Lambda}_{\{\Lambda_i\}_{i\in I}}$ is closed in $\textup{GL}_\Lambda(P)$.
    \item The normal subgroup $N^{\Lambda}_{\{\Lambda_i\}_{i\in I}}$ is open if, and only if, $\bigsqcup_{i\in I} \Lambda_i$ is a finite set and $P$ has the discrete topology.
\end{enumerate}
\end{corollary}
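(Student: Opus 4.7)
The plan is to reduce the statement to its ideal analogue, Corollary~\ref{coro:open iff matrices}, by exploiting the identity
\[
N^\Lambda_{\{\Lambda_i\}_{i \in I}} \;=\; \bigl(1 + I^\Lambda_{\{\Lambda_i\}_{i \in I}}\bigr) \cap \textup{GL}_\Lambda(P),
\]
which falls out of the definitions: $A\in N^\Lambda_{\{\Lambda_i\}_{i\in I}}$ is the same as demanding $A|_{\Lambda_i}$ be the identity matrix for every $i\in I$, equivalently $(A-1)_{s_1,s_2}=0$ whenever $s_1,s_2$ share a common $\Lambda_i$, i.e.\ $A-1\in \bigcap_{i\in I}I^\Lambda_{\Lambda_i}=I^\Lambda_{\{\Lambda_i\}_{i\in I}}$. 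Because $P$ is an absolute topological ring, Corollary~\ref{coro:matrices absolute topological} together with Proposition~\ref{prop:making group inverse limit} realises $\textup{GL}_\Lambda(P)$ as a topological subspace of $\textup{M}_\Lambda(P)$, so open and closed subsets of $\textup{GL}_\Lambda(P)$ are precisely the traces on it of open and closed subsets of $\textup{M}_\Lambda(P)$.

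Given this setup, part~(1) is immediate: $I^\Lambda_{\{\Lambda_i\}_{i\in I}}$ is closed in $\textup{M}_\Lambda(P)$ by Corollary~\ref{coro:open iff matrices}, translation by $1$ is a self-homeomorphism, and intersecting the resulting closed set with the subspace $\textup{GL}_\Lambda(P)$ preserves closedness. The ($\Leftarrow$) direction of~(2) is similarly short: the hypotheses make $I^\Lambda_{\{\Lambda_i\}_{i\in I}}$ open in $\textup{M}_\Lambda(P)$ by Corollary~\ref{coro:open iff matrices}, and the translated intersection with $\textup{GL}_\Lambda(P)$ yields $N^\Lambda_{\{\Lambda_i\}_{i\in I}}$ open.

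The hard part will be the forward direction of~(2). Assuming $N^\Lambda_{\{\Lambda_i\}_{i\in I}}$ is open, Corollary~\ref{coro:classifying basis} will supply a finite convex $\alpha\in\Gamma(\Lambda)$ and a neighborhood $S_\sigma$ of $0$ in $P$ with
\[
\bigl(1+I^\Lambda_\alpha+\textup{M}_\Lambda(S_\sigma)\bigr)\cap \textup{GL}_\Lambda(P)\;\subset\; N^\Lambda_{\{\Lambda_i\}_{i\in I}}.
\]
First I would rule out any $s\in\bigsqcup_i\Lambda_i\setminus\alpha$ by a case split: either the containing component $\Lambda_{i_0}$ admits, by convexity, a second element $t\in\Lambda_{i_0}$ comparable with $s$, in which case the unitriangular unit $1+e^{(s,t)}$ (with inverse $1-e^{(s,t)}$) lies in $1+I^\Lambda_\alpha$ since $s\notin\alpha$ puts $e^{(s,t)}$ into $I^\Lambda_\alpha$, yet violates the off-diagonal part of $N$; or $\Lambda_{i_0}=\{s\}$, and then $1+(p-1)e^{(s,s)}$ for any $p\in P^*\setminus\{1\}$ is a diagonal unit in the neighborhood violating the diagonal part of $N$. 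This forces $\bigsqcup_i\Lambda_i\subset\alpha$, hence finite. Discreteness of $P$ is then extracted by the same game with a variable coefficient: if some $\Lambda_{i_0}\subset\alpha$ contains $s\preceq t$ with $s\neq t$, the unitriangular matrix $1+q\cdot e^{(s,t)}$ is invertible for every $q\in P$ and lies in $1+\textup{M}_\Lambda(S_\sigma)$ for every $q\in S_\sigma$, so membership in $N$ forces $q=0$ and hence $S_\sigma=\{0\}$; the residual all-singletons case is handled via $1+q\cdot e^{(s,s)}$, using the absolute topological ring structure of $P$ (openness of $P^*$ at $1$) to ensure invertibility for all small $q$ and force once more $S_\sigma=\{0\}$, whence $P$ is discrete.
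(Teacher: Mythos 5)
Your reduction $N^\Lambda_{\{\Lambda_i\}_{i\in I}}=\bigl(1+I^\Lambda_{\{\Lambda_i\}_{i\in I}}\bigr)\cap\textup{GL}_\Lambda(P)$ is correct and is surely what the paper has in mind: the corollary is stated with no proof at all, merely introduced as ``the equivalent for Corollary~\ref{coro:open iff matrices} for the groups'', so translation by $1$ inside the topological ring $\textup{M}_\Lambda(P)$ plus the fact that $\textup{GL}_\Lambda(P)$ carries the subspace topology immediately gives part~(1) and the ``if'' half of part~(2), exactly as you argue.

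The genuine gap is in the ``only if'' direction of~(2), at both points where you fall back on singleton components. First, when $\Lambda_{i_0}=\{s\}$ with $s\notin\alpha$ you need some $p\in P^*\setminus\{1\}$, and the hypotheses do not provide one: $P=\mathbb{F}_2$ is a commutative absolute topological ring with $P^*=\{1\}$. Second, in the residual all-singletons step you invoke ``openness of $P^*$ at $1$'', but Definition~\ref{thrm:giving topology to unit group} only requires inversion to be continuous on $P^*$ in the subspace topology; it does not make $P^*$ open (e.g.\ $P=\prod_{n\in\mathbb{N}}\mathbb{F}_2$ with the product topology has $P^*=\{1\}$, which is not open, yet $P$ is a commutative absolute topological ring). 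These are not presentational defects you can patch: the statement itself fails in exactly these degenerate situations. Taking $\Lambda$ an infinite antichain $S$, $\{\Lambda_i\}_{i\in I}$ the family of singletons and $P=\mathbb{F}_2$ discrete gives $\textup{GL}_\Lambda(P)=\{1\}=N^\Lambda_{\{\Lambda_i\}_{i\in I}}$, which is open although $\bigsqcup_{i\in I}\Lambda_i$ is infinite; taking $\Lambda=\{s\}$ and $P=\prod_{n}\mathbb{F}_2$ gives $N=\textup{GL}_\Lambda(P)=\{1\}$ open with $P$ non-discrete. Your argument is complete (and the corollary correct) whenever every $\Lambda_i$ contains two distinct comparable elements, since there the unitriangular units $1+q\,e^{(s,t)}$ do all the work over an arbitrary ring; to cover singleton components one must add hypotheses such as $P^*\neq\{1\}$ and $P^*$ open in $P$. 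Since the paper supplies no proof of its own here, this is worth flagging to the author rather than forcing through.
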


One can also get an equivalent to Theorem \ref{thrm:functor construction} for the groups of units. For that we denote $\mathbf{U}:\mathbf{Rings}\rightarrow \mathbf{Grps}$ the functor that maps a ring to its group of units. The following is a direct consequence from the fact right adjoint functors preserve limits [Proposition 9.14 \cite{Category_theory_101}], Theorem \ref{thrm:functor construction}, Corollary \ref{coro:functor maps colimits to limits} and Corollary \ref{coro:matrices absolute topological}:

\begin{theorem}\index{contravariant functor|ndx}
Let $P$ be a commutative, absolute topological ring. The functor $\mathbf{GL}_{\_}(P):=\mathbf{U}\circ \mathbf{M}_{\_}(P)$ is a contravariant functor from the category $\mathbf{prosets_{fin}}$ to the category  of topological groups, $\mathbf{TopGrps}$. This functor maps colimits to limits.
\end{theorem}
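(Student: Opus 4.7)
The strategy is to stack the three pieces of machinery already in place and check they fit: Theorem~\ref{thrm:functor construction} gives the contravariant functor $\mathbf{M}_{\_}(P):\mathbf{prosets_{fin}}\to\mathbf{TopRings}$, Corollary~\ref{coro:functor maps colimits to limits} says this functor sends colimits to limits, and the units functor $\mathbf{U}$ is right adjoint (Section 2.2, via the integral group ring adjunction), hence preserves all limits. The proof reduces to (i) checking that $\mathbf{U}\circ \mathbf{M}_{\_}(P)$ actually lands in $\mathbf{TopGrps}$, and (ii) chasing the two functorial properties through the composition.

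For (i), I would split into cases by reducibility. If $\Lambda$ is finite, then by Corollary~\ref{coro:matrices absolute topological} the ring $\textup{M}_\Lambda(P)$ is an absolute topological ring, so $\textup{GL}_\Lambda(P)=\mathbf{U}(\textup{M}_\Lambda(P))$ with the subspace topology is a topological group. For infinite irreducible $\Lambda$, Theorem~\ref{Prop: prodiscrete ring} realizes $\textup{M}_\Lambda(P)$ as $\varprojlim_{\alpha\in\Gamma(\Lambda)}\textup{M}_\alpha(P)$, and Proposition~\ref{prop:making group inverse limit} transfers this to $\textup{GL}_\Lambda(P)\cong\varprojlim_{\alpha\in\Gamma(\Lambda)}\textup{GL}_\alpha(P)$ as a topological group. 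The reducible case collapses to a product of topological groups via Proposition~\ref{prop:disjoint union and product rings}. For the morphisms, any FCC map $f:\Lambda_1\to\Lambda_2$ induces a continuous unital ring homomorphism $\mathbf{M}[f]$, which automatically sends units to units; its restriction $\mathbf{GL}[f]:\textup{GL}_{\Lambda_2}(P)\to\textup{GL}_{\Lambda_1}(P)$ inherits continuity because both unit groups carry the subspace (resp. inverse-limit) topology coming from the ambient matrix rings, and the subspace topology is preserved under restrictions of continuous maps.

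Functoriality and contravariance then transport directly from $\mathbf{M}_{\_}(P)$: identities restrict to identities and $\mathbf{GL}[f_2\circ f_1]=\mathbf{GL}[f_1]\circ\mathbf{GL}[f_2]$ follows from the analogous identity for $\mathbf{M}[\cdot]$ established in Theorem~\ref{thrm:functor construction}. For the colimit-to-limit property, let $D:\mathcal{J}\to\mathbf{prosets_{fin}}$ be a diagram with colimit $\Lambda$; Corollary~\ref{coro:functor maps colimits to limits} gives $\mathbf{M}_\Lambda(P)\cong\varprojlim_{j}\mathbf{M}_{D(j)}(P)$ in $\mathbf{TopRings}$, and since $\mathbf{U}$ is right adjoint it preserves this limit, yielding $\mathbf{GL}_\Lambda(P)\cong\varprojlim_{j}\mathbf{GL}_{D(j)}(P)$ in $\mathbf{TopGrps}$.

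The main obstacle I anticipate is purely topological bookkeeping: one must verify that the topological group structure on $\textup{GL}_\Lambda(P)$, defined as an inverse limit of the topological groups $\textup{GL}_\alpha(P)$, genuinely coincides with the subspace topology from $\textup{M}_\Lambda(P)$, so that the arrow-level claim in (i) makes sense simultaneously in both descriptions. This compatibility hinges on the absolute topological ring hypothesis on $P$ (propagated to each $\textup{M}_\alpha(P)$ by Corollary~\ref{coro:matrices absolute topological}) together with Proposition~\ref{prop:making group inverse limit}, and amounts to observing that the inclusion $\textup{GL}_\Lambda(P)\hookrightarrow \textup{M}_\Lambda(P)$ is a topological embedding compatible with the projections $\psi_\alpha$; once that is noted, the rest of the argument is formal.
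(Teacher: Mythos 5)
Your proposal is correct and follows essentially the same route as the paper, which derives this theorem as a direct consequence of Theorem~\ref{thrm:functor construction}, Corollary~\ref{coro:functor maps colimits to limits}, Corollary~\ref{coro:matrices absolute topological}, and the fact that the right-adjoint units functor $\mathbf{U}$ preserves limits. Your additional bookkeeping (splitting into finite, infinite irreducible, and reducible cases, and checking that the inverse-limit topology on $\textup{GL}_\Lambda(P)$ agrees with the subspace topology) only makes explicit what the paper leaves implicit via Proposition~\ref{prop:making group inverse limit}.
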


We can use these results to classify when the group of units of an incidence ring is solvable, prosolvable, or neither. Before stating and giving an alternative proof for \cite[Lemma 3.23]{silva2024topologicallysimpleinfinitematrix}, we will prove the following:

\begin{lemma}\label{lemma:seeing as upper triangular}
    Let $\Lambda$ be a finite poset and $P$ ring. Then there exists a embedding 
$$i_{\Lambda,n}:\textup{M}_{\Lambda}(P)\rightarrow \textup{M}_{<n}(P)$$
    where $n=|\Lambda|$.
\end{lemma}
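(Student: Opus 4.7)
The plan is to reduce the problem to finding a linear extension (topological sort) of the finite poset $\Lambda$, that is, an enumeration $\Lambda = \{s_1, s_2, \ldots, s_n\}$ such that $s_i \preceq s_j$ implies $i \leq j$. Such an enumeration exists for every finite poset by a standard inductive argument: pick any minimal element $s_1 \in \Lambda$, remove it, and repeat on $\Lambda \setminus \{s_1\}$, which is again a finite poset. Once such a labelling is fixed, I would define
\[
i_{\Lambda,n}\colon \textup{M}_\Lambda(P) \longrightarrow \textup{M}_{\mathbf{n}<}(P), \qquad i_{\Lambda,n}(A)_{i,j} := A_{s_i, s_j},
\]
and then simply verify that this map has the desired properties.

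The first step is to check that $i_{\Lambda,n}(A)$ really lies in $\textup{M}_{\mathbf{n}<}(P)$. If $i_{\Lambda,n}(A)_{i,j} = A_{s_i,s_j} \neq 0$, then by Definition \ref{defi:proset matrices} we have $s_i \preceq s_j$ in $\Lambda$, which forces $i \leq j$ by the choice of enumeration. So $i_{\Lambda,n}(A)$ is upper triangular. Additive compatibility and the fact that the identity of $\textup{M}_\Lambda(P)$ (diagonal $1$'s) maps to the $n \times n$ identity matrix are immediate from the coordinatewise definition.

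The one point that needs verification is multiplicativity. For $A, B \in \textup{M}_\Lambda(P)$, the upper-triangular product satisfies
\[
\bigl(i_{\Lambda,n}(A)\, i_{\Lambda,n}(B)\bigr)_{i,j} \;=\; \sum_{i \leq k \leq j} A_{s_i, s_k}\, B_{s_k, s_j},
\]
while the incidence-ring product gives $(AB)_{s_i, s_j} = \sum_{t \in [s_i, s_j]} A_{s_i, t} B_{t, s_j}$. These two sums agree because a term $A_{s_i, s_k} B_{s_k, s_j}$ can only be nonzero when $s_i \preceq s_k$ and $s_k \preceq s_j$, so the only $k$ contributing in the upper-triangular sum are precisely those with $s_k \in [s_i, s_j]$. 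Thus $i_{\Lambda,n}(AB) = i_{\Lambda,n}(A)\, i_{\Lambda,n}(B)$.

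Finally, injectivity is obvious: if $i_{\Lambda,n}(A) = 0$, then $A_{s_i, s_j} = 0$ for every $i, j$, and since the $s_i$ exhaust $\Lambda$, we get $A = 0$. There is no real obstacle here; the content of the lemma is just the existence of a linear extension of a finite poset, after which the embedding writes itself down. If one wanted to be completely explicit, the existence of the linear extension could be cited as a consequence of well-foundedness of finite posets, but the inductive removal of minimal elements is self-contained and short.
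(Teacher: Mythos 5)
Your proof is correct and follows essentially the same route as the paper: both arguments construct a linear extension of the finite poset (the paper by stratifying into successive layers of minimal elements, you by removing one minimal element at a time) and then transport coordinates to obtain an upper-triangular presentation. Your explicit check of multiplicativity is a welcome addition that the paper leaves implicit.
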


\begin{proof}
    Let $\Lambda=\{s_0,s_1,\ldots, s_{n-1}\}$. Let $m_1\subset \Lambda$ be the set of all minimal elements. Then for every $t_1\neq t_2\in m_1$, $t_1\nleq t_2$ and $t_2\nleq t_1$, as otherwise we would have a contradiction in relation to the minimality of the elements of $m_1$. Inductively, define $$m_{k+1}\subset \Lambda\backslash \bigcup_{i<k+1}m_i$$
    as the set of all minimal elements of $\Lambda\backslash \cup_{i<k+1}m_i$. As the poset $\Lambda$ is finite, there is some $N\in\mathbb{N}$ such that $m_N\neq \emptyset$ and $m_{N+1}=\emptyset$.\\
    \indent Using the sets described above, we can order the elements of $\Lambda$ so that if $s_i\in m_j$ then $s_{i+1}\in m_j\cup m_{j+1}$. It then follows that for $0\leqslant i<j<n$, if $s_i\in m_k$ we have that either $s_i<s_j$ or $s_i\ngeq s_j$, as if $s_i\geq s_j$ we would have a contradiction to the minimality of $s_i$  in $\Lambda\backslash \cup_{l<k} m_l$. We thus have the following presentation for the elements of $\textup{M}_{\Lambda}(P)$
     $$\left(\begin{array}{lllll}
a_{s_0,s_0} & a_{s_0,s_1} &  a_{s_0,s_2} & \ldots & a_{s_0,s_{n-1}} \\
0 & a_{s_1,s_1} & a_{s_1,s_2} & \ldots & a_{s_1,s_{n-1}} \\
0 & 0 & a_{s_2,s_2} & \ldots & a_{s_2,s_{n-1}} \\
\vdots & \vdots & \vdots & \ddots & \vdots \\
0 & 0 & 0 & \ldots & a_{s_{n-1},s_{n-1}} 
\end{array}\right)$$
    which induces an embedding from $\textup{M}_{\Lambda}(P)$ to $\textup{M}_{<n}(P)$ given by
\begin{align*}
i_{\Lambda,n}\colon \textup{M}_{\Lambda}(P) & \longrightarrow\textup{M}_{<n}(P)\\
(a_{s_i,s_j})_{s_i,s_j\in \Lambda}&\longmapsto (a_{i,j})_{i,j<n},
\end{align*}
where $a_{i,j}=a_{s_i,s_j}$ for all $i,j<n$.
\end{proof}

In other words, incidence algebras over finite posets can always be seen as a ring of upper triangular matrices. \\
\indent For the next lemma we will use the notation $G^{(n)}=[G^{ n-1},G^{n-1}]$ to describe the $n$-th element at the derived series of a group.

\begin{lemma}\cite[Lemma 3.23]{silva2024topologicallysimpleinfinitematrix}
Assume $\Lambda$ is a partially ordered set and $P$ is a commutative, absolute topological ring. Then the group $\textup{GL}_\Lambda(P)$ is 
residually solvable. If $\Lambda$ is $n$-bounded then $\textup{GL}_\Lambda(P)$ is solvable and $\textup{GL}_\Lambda(P)^{(n)}=\{1\}$.    
\end{lemma}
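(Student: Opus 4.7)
The statement naturally splits into residual solvability in general and an explicit derived length bound in the $n$-bounded case. In both parts I would reduce to the finite case through the inverse limit description $\textup{GL}_\Lambda(P) \cong \varprojlim_{\alpha \in \Gamma(\Lambda)} \textup{GL}_\alpha(P)$ from Proposition \ref{prop:making group inverse limit}, then apply Lemma \ref{lemma:seeing as upper triangular}.

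For residual solvability, observe that each projection $\pi_\alpha \colon \textup{GL}_\Lambda(P) \to \textup{GL}_\alpha(P)$ is surjective: the block extension of $B \in \textup{GL}_\alpha(P)$ by the identity on the diagonal of $\Lambda \setminus \alpha$ and by zero on the cross-blocks lies in $\textup{GL}_\Lambda(P)$, because convexity of $\alpha$ makes the block structure multiplicatively closed. The inverse limit property yields $\bigcap_\alpha \ker \pi_\alpha = \{1\}$, and Lemma \ref{lemma:seeing as upper triangular} embeds $\textup{M}_\alpha(P)$ into $\textup{M}_{<|\alpha|}(P)$, whose group of units is the classical upper triangular group, well-known to be solvable over any commutative ring. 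Hence every quotient $\textup{GL}_\Lambda(P)/\ker \pi_\alpha$ is solvable, which is exactly what residual solvability requires.

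For the $n$-bounded case, I would refine the enumeration in Lemma \ref{lemma:seeing as upper triangular} by stratifying a finite convex $\alpha \subseteq \Lambda$ by height: setting $L_i = \{s \in \alpha : \text{the longest chain of } \alpha \text{ ending at } s \text{ has exactly } i \text{ elements}\}$ gives a decomposition $\alpha = L_1 \sqcup \cdots \sqcup L_n$ with at most $n$ nonempty strata. Listing $L_1, L_2, \ldots, L_n$ consecutively makes the embedding of Lemma \ref{lemma:seeing as upper triangular} block upper triangular with $n$ block rows, and each diagonal block is itself diagonal because distinct elements of the same stratum are incomparable. Let $J_k \subseteq \textup{M}_\alpha(P)$ be the two-sided ideal of matrices whose $(i,j)$-block vanishes unless $j - i \geq k$; a direct computation yields $J_k J_l \subseteq J_{k+l}$ and $J_n = 0$, so the subgroups $H_k := I + J_k$ (for $k \geq 1$) form a descending chain terminating at $H_n = \{I\}$ with abelian quotient $\textup{GL}_\alpha(P)/H_1 \cong \prod_{s \in \alpha} P^*$.

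The commutator identity $[A, B] - I = (ab - ba)(BA)^{-1}$, applied to $A = I + a \in H_k$ and $B = I + b \in H_l$, places $[A, B] - I$ inside $J_k J_l \cdot \textup{M}_\alpha(P) \subseteq J_{k+l}$, so $[H_k, H_l] \subseteq H_{k+l}$ and in particular $[H_k, H_k] \subseteq H_{k+1}$. Combined with $\textup{GL}_\alpha(P)^{(1)} \subseteq H_1$, induction gives $\textup{GL}_\alpha(P)^{(m)} \subseteq H_m$ for every $m \geq 1$, hence $\textup{GL}_\alpha(P)^{(n)} = \{I\}$. Applying each $\pi_\alpha$ to $\textup{GL}_\Lambda(P)^{(n)}$ lands inside $\textup{GL}_\alpha(P)^{(n)} = \{1\}$, and separation forces $\textup{GL}_\Lambda(P)^{(n)} = \{1\}$. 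The main delicate point is keeping the level stratification, the ideal filtration, and the derived series indexing aligned so that $n$-boundedness produces exactly $H_n = \{I\}$ rather than $H_{n \pm 1}$; once that book-keeping is fixed, everything else is either a one-line commutator computation or a standard induction.
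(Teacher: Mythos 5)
Your proposal is correct, and its first half (residual solvability) runs along exactly the same lines as the paper: reduce to the finite convex subsets $\alpha\in\Gamma(\Lambda)$ via the inverse limit of Proposition \ref{prop:making group inverse limit}, embed $\textup{M}_\alpha(P)$ into upper triangular matrices by Lemma \ref{lemma:seeing as upper triangular}, and conclude that each $\textup{GL}_\alpha(P)$ is solvable; your observation that $\pi_\alpha$ is surjective is true but not actually needed, since a subgroup of a solvable group is already solvable. Where you genuinely diverge is in the $n$-bounded case. The paper's argument only extracts from Lemma \ref{lemma:seeing as upper triangular} the bound $\textup{GL}_\alpha^{(|\alpha|)}(P)=\{1\}$ and then asserts that $n$-boundedness gives $\textup{GL}_\alpha^{(n)}(P)=\{1\}$ for every $\alpha$, which does not follow from that lemma alone because $|\alpha|$ can be much larger than $n$; it then passes to the product $\prod_\alpha\textup{GL}_\alpha(P)$ and uses that $\textup{GL}_\Lambda(P)$ embeds in it. Your height stratification $\alpha=L_1\sqcup\cdots\sqcup L_n$, together with the filtration $H_k=I+J_k$ satisfying $[H_k,H_l]\subseteq H_{k+l}$, $H_n=\{I\}$, and the abelian quotient $\textup{GL}_\alpha(P)/H_1\cong\prod_{s\in\alpha}P^*$, is precisely the argument needed to get the derived length bounded by the chain length $n$ rather than by $|\alpha|$; so your route is not only different but actually supplies a step the paper leaves implicit. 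The commutator computation $[A,B]-I=(ab-ba)(BA)^{-1}\in J_{k+l}$ is correct since each $J_{k+l}$ is a two-sided ideal of the block upper triangular ring, and the final passage from $\pi_\alpha\bigl(\textup{GL}_\Lambda(P)^{(n)}\bigr)=\{1\}$ for all $\alpha$ to $\textup{GL}_\Lambda(P)^{(n)}=\{1\}$ is the same separation argument, phrased via $\bigcap_\alpha\ker\pi_\alpha=\{1\}$ instead of via a subgroup of the product. The only book-keeping point worth double-checking is the one you flag yourself: that same-height elements of a poset are incomparable (so the diagonal blocks are genuinely diagonal and $\textup{GL}_\alpha(P)/H_1$ is abelian), which holds because a relation $s\precnapprox t$ within one stratum would extend a maximal chain ending at $s$ to a strictly longer one ending at $t$.
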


\begin{proof}
    For every $\alpha\in \Gamma(\Lambda)$, that is, $\alpha$ a finite convex subset of $\Lambda$ with $|\alpha|=n$, let $i_{\alpha,n}$ be the embedding defined at Lemma \ref{lemma:seeing as upper triangular}. Notice that, by restricting the map to the commutator subgroups, we have a map
    $$i^{(1)}_{\alpha,n}:\textup{GL}_{\alpha}^{(1)}(P)\rightarrow \textup{GL}_{<n}^{(1)}(P),$$
    which is also an embedding. Inductively we can define the embeddings $i^{(j)}_{\alpha,n}:\textup{GL}_{\alpha}^{(j)}(P)\rightarrow \textup{GL}_{<n}^{(j)}(P)$. Notice that since $\textup{GL}_{<n}^{(n)}(P)=\{1\}$ and $i^{(n)}_{\alpha,n}$ is an embedding, we have that $\textup{GL}^{(n)}_{\alpha}(P)=\{1\}$, that is, $\textup{GL}_{\alpha}(P)$ is solvable. As this is the case for all $\alpha\in \Gamma(\Lambda)$ we have, by Proposition \ref{prop:making group inverse limit}, that $\textup{GL}_{\Lambda}(P)$ is an inverse limit of solvable groups, hence it is prosolvable.\\
    \indent If $\Lambda$ is $n$-bounded, then for every $\alpha\in \Gamma(\Lambda)$ we have that $\textup{GL}^{(n)}_\alpha(P)=\{1\}$. As the product of solvable groups is solvable, we have that 
    $$G:=\prod_{\alpha\in \Gamma(\Lambda)}\textup{GL}_{\alpha}(P)$$
    is also solvable, and $G^{(n)}=\{1\}$. As $\textup{GL}_{\Lambda}(P)$ is a subgroup of the product defined above, we have that $\textup{GL}^{(n)}_{\Lambda}(P)$, as desired.
\end{proof}

\begin{proposition}
    Assume that $\Lambda$ is a preordered set that is not a poset, and $\mathbb{F}$ is a field with at least four elements. The group $\textup{GL}_{\Lambda}(\mathbb{F})$ is not solvable nor prosolvable.
\end{proposition}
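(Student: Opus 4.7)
The plan is to locate a non-trivial perfect subgroup inside $\textup{GL}_\Lambda(\mathbb{F})$ and show that it sits inside every term of the derived series. Since $\Lambda$ is not a poset, there exist distinct $s_1 \neq s_2 \in \Lambda$ with $s_1 \sim s_2$. Setting $E := [s_1, s_1]$ (the equivalence class of $s_1$), I get a finite convex subset of cardinality $k := |E| \geq 2$ in which every pair of elements is $\preceq$-related. Consequently the incidence ring $\textup{M}_E(\mathbb{F})$ is simply the ordinary $k \times k$ matrix ring $\textup{M}_k(\mathbb{F})$, and matrices supported on $E \times E$ are unconstrained by the order.

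Next I would realise $\textup{GL}_k(\mathbb{F})$ as a subgroup of $\textup{GL}_\Lambda(\mathbb{F})$ via a corner embedding. Let $\iota : \textup{M}_E(\mathbb{F}) \to \textup{M}_\Lambda(\mathbb{F})$ send a matrix to the element with entries on the $E \times E$ block and zeros elsewhere, and write $1^{\Lambda \setminus E}$ for the diagonal idempotent supported on the complement of $E$. The orthogonality $\iota(A) \cdot 1^{\Lambda \setminus E} = 1^{\Lambda \setminus E} \cdot \iota(A) = 0$ gives
\[
(1^{\Lambda \setminus E} + \iota(A))(1^{\Lambda \setminus E} + \iota(B)) = 1^{\Lambda \setminus E} + \iota(AB),
\]
so $H := \{1^{\Lambda \setminus E} + \iota(A) : A \in \textup{GL}_k(\mathbb{F})\}$ is a subgroup of $\textup{GL}_\Lambda(\mathbb{F})$ isomorphic to $\textup{GL}_k(\mathbb{F})$; in particular it contains a copy $P$ of $\textup{SL}_k(\mathbb{F})$.

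Now I would invoke the classical theorem that $\textup{SL}_k(\mathbb{F})$ is perfect whenever $k \geq 2$ and $|\mathbb{F}| \geq 4$. Since $P$ is a non-trivial perfect subgroup of $\textup{GL}_\Lambda(\mathbb{F})$, the equality $P = [P, P]$ propagates through the derived series, giving $P \subseteq \textup{GL}_\Lambda(\mathbb{F})^{(n)}$ for every $n$. This immediately rules out solvability. To rule out prosolvability, assume that $\textup{GL}_\Lambda(\mathbb{F}) \cong \varprojlim_i G_i$ with each $G_i$ solvable; then each projection $\textup{GL}_\Lambda(\mathbb{F}) \to G_i$ sends the perfect group $P$ into a perfect subgroup of a solvable group, i.e.\ into the trivial subgroup. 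Thus $P$ lies in every such projection kernel and therefore in their (Hausdorff-trivial) intersection, contradicting $P \neq \{1\}$.

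The only step that is not purely deductive is verifying the corner construction: one must check that $1^{\Lambda \setminus E} + \iota(A)$ really belongs to $\textup{GL}_\Lambda(\mathbb{F})$, which reduces to the orthogonality of $1^E$ and $1^{\Lambda \setminus E}$ together with the fact that all pairs in $E \times E$ satisfy $\preceq$. A strategic pitfall to avoid is to rely only on the quotient $\textup{GL}_\Lambda(\mathbb{F}) / N^\Lambda_{[s_1,s_1]} \cong \textup{GL}_k(\mathbb{F})$, since while that immediately delivers non-solvability it does not directly yield non-prosolvability without a separate perfection argument, so it is cleaner to work with the subgroup $P$ throughout.
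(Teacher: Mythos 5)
Your proposal is correct and follows essentially the same route as the paper: both isolate the equivalence class $E=[s,s]$ of size $\geq 2$, embed $\textup{GL}_{|E|}(\mathbb{F})$ into $\textup{GL}_\Lambda(\mathbb{F})$ as a corner subgroup supported on $E\times E$ (the paper describes it by coordinate conditions, you via the idempotent $1^{\Lambda\setminus E}$), and appeal to the perfection of $\textup{SL}_k(\mathbb{F})$ for $|\mathbb{F}|\geq 4$. Your explicit derived-series and projection-to-solvable-quotient arguments merely spell out what the paper leaves implicit when it asserts that a non-(pro)solvable subgroup obstructs (pro)solvability.
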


\begin{proof}
    As $\Lambda$ is a preordered set, there is $s\in \Lambda$ such that $[s,s]=\{z\in \Lambda :\ s\preceq z \preceq s\}\neq \{s\}$. Fix one $s$ satisfying this property and let $\alpha=[s,s]$. Notice that the group $\textup{GL}_{\alpha}(\mathbb{F})$ is isomorphic to $\textup{GL}_n(\mathbb{F})$, the group of $n\times n$ invertible square matrices over $\mathbb{F}$, where $n=|\alpha|$. As $[s,s]\neq \{1\}$ then $n>1$, hence the group $\textup{GL}_{\alpha}(\mathbb{F})$ is not solvable nor prosolvable, as $[\textup{GL}_{n}(\mathbb{F}),\textup{GL}_{n}(\mathbb{F})]=\textup{SL}_{n}(\mathbb{F})$ is a perfect group. \\
    \indent Let 
    $$\textup{G}=\{A\in \textup{GL}_{\Lambda}(\mathbb{F}):\ \text{ if }s_1\notin \alpha \text{ and }s_1\neq s_2 \text{ then } A_{s_1,s_1}=1,\ A_{s_1,s_2}=0 \text{ and }A_{s_2,s_1}=0 \}.$$
    This is a closed subgroup of $\textup{GL}_\Lambda(\mathbb{F})$ that is isomorphic to $\textup{GL}_{\alpha}(\mathbb{F})\cong \textup{GL}_{n}(\mathbb{F})$, as it only allows the coordinates indexed by $\alpha$ to be non-trivial. As $\textup{GL}_{\Lambda}(\mathbb{F})$ has a subgroup that is not solvable nor prosolvable, the result follows.
\end{proof}

\noindent \small{\textbf{Acknowledgements}  This article is part of my PhD thesis written under the supervision of George Willis, Colin Reid, and Stephan Tornier. I would like to thank them for advising me while writing the article and proofreading it. A special thanks as well to Mykola Khrypchenko for giving me references on the past work done on incidence rings, and for some nice discussion on some of the properties of such rings.\\
\textbf{Funding} The author received a PhD scholarship coming from Australian Research Council, under the Australian Laureate Fellowship ARC ``Zero-dimensional symmetry and its ramifications'' (FL170100032) for Professor George Willis. The work was entirely done during the author PhD.
}

\bibliographystyle{elsarticle-harv} 
\bibliography{main.bib}

\end{document}